\crefname{assumption}{assumption}{assumptions} 
\Crefname{assumption}{Assumption}{Assumptions} 
\numberwithin{equation}{section}
\begin{document}
\theoremstyle{plain} 
\newtheorem{theorem}{Theorem}[section]
\newtheorem{corollary}[theorem]{Corollary}
\newtheorem{lemma}[theorem]{Lemma}
\newtheorem{proposition}[theorem]{Proposition}
\newtheorem{definition}[theorem]{Definition} 
\newtheorem{assumption}[theorem]{Assumption} 
\theoremstyle{remark} 
\newtheorem{remark}[theorem]{Remark} 
\newcommand{\bs}{\boldsymbol}
\newcommand{\cou}{\rC([0,t],U)}
\newcommand{\cuor}{\rC_0([0,+\infty),U)}
\newcommand{\cwor}{\rC_0((0,+\infty),\W)}
\newcommand{\cuocuor}{\sL(\rC_0((0,+\infty),U))}
\newcommand{\D}{\mathcal{D}_+-\mathcal{D}_+}
\newcommand{\rL}{\mathrm{L}}
\newcommand{\rC}{\mathrm{C}}
\newcommand{\rW}{\mathrm{W}}
\newcommand{\rH}{\mathrm{H}}
\newcommand{\lu}{\rL^1([0,t],U)}
\newcommand{\lpu}{\rL^p([0,t],U)}
\newcommand{\lpv}{\rL^p([0,t],\V)}
\newcommand{\lprr}{\rL^p([0,+\infty),\mathbb{R}^N)}
\newcommand{\lpru}{\rL^p([0,+\infty),U)}
\newcommand{\lprv}{\rL^p([0,+\infty),\V)}
\newcommand{\ceor}{\rC_0((0,+\infty),\mathbb{R}^N)}
\newcommand{\lru}{\rL^1([0,+\infty),U)}
\newcommand{\lrv}{\rL^1([0,+\infty),\V)}
\newcommand{\lrr}{\rL^1([0,+\infty),\mathbb{R}^N)}
\newcommand{\R}{\mathbb{R}^N}
\newcommand{\F}{{\mathcal{F}}_\infty}
\newcommand{\cxor}{\rC_0((0,+\infty),E)}
\newcommand{\lpe}{\rL^p([0,t],E)}
\newcommand{\ce}{\rC_0((0,t],E)}
\newcommand{\tu}{T([0,t],U)}
\newcommand{\tuor}{T_c([0,+\infty),U)}
\newcommand{\tvor}{T_c([0,+\infty),\V)}
\newcommand{\twoe}{T_c([0,+\infty),\W)}
\parindent=0pt
\labelindent=10pt
\newcommand{\rg}{\operatorname{rg}}
\renewcommand{\Re}{\operatorname{Re}}
\newcommand{\spb}{\operatorname{s}}
\newcommand{\sr}{\operatorname{r}}
\newcommand{\spn}{\operatorname{span}}
\newcommand{\sL}{\mathcal{L}}
\newcommand{\sR}{\mathcal{R}}
\newcommand{\sD}{\mathcal{D}}
\newcommand{\Xmo}{X_{-1}}
\newcommand{\nota}[1]{\textcolor{red}{#1}}
\newcommand{\one}{\mathbbm{1}}
\newcommand{\sBt}{\mathcal{B}_t}
\newcommand{\ub}{\bar u}
\newcommand{\NN}{\mathbb{N}}
\newcommand{\RR}{\mathbb{R}}
\newcommand{\V}{U_1}
\newcommand{\W}{U_\infty}

\newcommand{\At}{\widetilde{A}}
\newcommand{\Xt}{\widetilde{X}}
\newcommand{\Tt}{\widetilde{T}}
\renewcommand{\phi}{\varphi}
\title[Structured Perturbations of Positive Semigroups]{On Structured Perturbations\\ of Positive Semigroups}
\author{Alessio Barbieri$^*$} 
\author{Klaus-Jochen Engel}
\address{University of L’Aquila, Department of Information Engineering, Computer Science and Mathematics, Via Vetoio, Coppito, I-67100 L’Aquila (AQ), Italy}
\email{alessio.barbieri@graduate.univaq.it,klaus.engel@univaq.it}
\keywords{Positive semigroup, generator, perturbation}
\subjclass{47D06, 47A55, 34G10,46B42}
\thanks{$*$ The author is a member of \textit{Gruppo Nazionale per l'Analisi Matematica, Probabilità e le loro Applicazioni} (GNAMPA) of the Istituto Nazionale di Alta Matematica (INdAM)}

\begin{abstract}
In this note we generalize perturbation results for positive $C_0$-semigroups on AM- and AL-spaces and give a Weiss--Staffans type perturbation result for generators of positive semigroups on Banach lattices. The abstract results are applied to domain perturbations of generators, a heat equation with boundary feedback and perturbations of the first derivative.
\end{abstract}
\maketitle
\section{Introduction}\label{sec:intro}

Many systems evolving in time can be described by an Abstract Cauchy Problem of the form
\begin{equation*}
\tag{ACP}
\label{ACP-1}
\begin{cases}
{\tfrac{d}{dt}}\, x(t)=Gx(t),&t\ge0,\\
x(0)=x_0
\end{cases}
\end{equation*}
where $G$ is an unbounded linear (e.g., differential) operator on a Banach space $X$, cf. \cite[Chap.VI]{EN:00}.  As shown in \cite[Sect.II.6]{EN:00}, this problem is well-posed if and only if $G$ generates a $C_0$-semigroup $(S(t))_{t\ge0}$ on $X$. Moreover, in this case its unique solution is given by $x(t)=S(t)x_0$. 

\smallbreak
For this reason it's vital to have tools at hand which allow to verify the generator property of a given operator $G$.  In the dissipative case the Lumer--Phillips theorem is such a result which provides a characterization of generators of contraction semigroups in terms of $G$ itself and applies to many concrete examples. Its counterpart in the non-dissipative case is the famous Hille--Yosida theorem. 
This result, however, is not based on the given operator $G$ but on growth estimates of \emph{all} powers of its resolvent which frequently are impossible to verify. 
\smallbreak
In order to check well-posedness of \eqref{ACP-1} for (non dissipative) operators $G$ where direct computations involving the resolvent are impossible to perform, one can try to split $G$ into a sum ``$G=A+P\,$'' for a simpler generator $A$ and a perturbation $P$ and then use some kind of perturbation result to conclude that also $G$ generates a $C_0$-semigroup on $X$.

\smallbreak
The Weiss--Staffans theorem on regular linear systems with feedback (cf. \cite[Thms 6.1 \& 7.2]{Wei:94a} and \cite[Sects.7.1 \& 7.4]{Sta:05}) is one of the most general result in this direction. In the present paper we continue our study of purely operator theoretic versions of this theorem initiated in \cite{ABE:14} and \cite{ABE:15} avoiding notions and results from abstract linear systems theory.

\smallbreak
To explain our approach we choose two Banach spaces $X$ and $U$  called \emph{state-}, and \emph{observation/control space}\footnote{The language used here and in the sequel from control theory becomes clearer by considering the linear system~\ref{csu} below. Note that we assume that the control- and observation spaces coincide which in our context is no restriction of generality.}, respectively.
On these spaces we consider
\begin{itemize}
	\item a \emph{state operator} $A:D(A)\subset X \to X$ with non-empty resolvent set $\rho(A)$,
	\item a \emph{control operator} $B\in\sL(U,\Xmo)$, and
	\item an \emph{observation operator} $C\in\sL(Z,U)$.
\end{itemize}
Here, $Z$ is a Banach space satisfying $D(A)\subseteq Z\subseteq X$.
Moreover, $\Xmo$ denotes the extrapolation space with respect to $A$, cf. \cite[Chap.II.5]{EN:00}.
Then we consider structured perturbations of the form $P=BC:Z\to\Xmo$ and obtain the perturbed operator $G$ of the form
\begin{equation}\label{eq:def-G}
G=(A_{-1}+BC)|_X,\quad
D(G):=\bigl\{x\in Z:(A_{-1}+BC)x\in X\bigr\},
\end{equation}
where $A_{-1}:X\subset\Xmo\to\Xmo$ is the extension of $A$ to $X$.

\smallskip
This particular choice of perturbations $P=BC$ is justified by its great flexibility and by many applications, see e.g.\ \Cref{sec:App}. 
Another motivation, which also explains the control theoretic language used above and in the sequel for the operators $A,B,C$ and the spaces $X,U$, goes as follows.

\smallskip
Consider the linear system with control function $u(\cdot):\RR_+\to U$ and observation function $y(\cdot):\RR_+\to U$ given by
\begin{equation*}
\tag*{$\Sigma(A,B,C)$}
\label{csu}
\begin{cases}
\tfrac d{dt}\,x(t)=A_{-1} x(t)+Bu(t),& t\geq0, \\
y(t)=Cx(t),& t\geq0, \\
x(0)=x_0.
\end{cases}
\end{equation*}

If $A$ generates the $C_0$-semigroup $(T(t))_{t\ge0}$, then the solution $x(\cdot)$ of \ref{csu} is formally  given by the variation of parameters formula
\begin{equation*}
x(t)=T(t)x_0+\int_0^tT_{-1}(t-s)Bu(s)\,ds.
\end{equation*}
Closing this system by introducing the feedback $u(t)=y(t)$, one formally obtains the perturbed abstract Cauchy problem
\begin{equation}
\label{eq:acp-BC}
\begin{cases}
\tfrac d{dt}\,x(t)=(A_{-1}+BC)x(t),& t\geq0, \\
x(0)=x_0.
\end{cases}
\end{equation}
Note that the problem~\eqref{eq:acp-BC} is not formulated in the original state space $X$ but in the
extrapolation space $\Xmo$. In most applications this space is only of theoretical interest and in general difficult to determine explicitly. However, being interested in the dynamics on the original state space we restrict this problem to $X$ and consider the part $G$ of $A_{-1}+BC$ in $X$  as defined in \eqref{eq:def-G}. This finally gives \eqref{ACP-1} in the state space $X$ for the operator 
\begin{equation}\label{eq:def-A_BC-i}
A_{BC}:=G=(A_{-1}+BC)|_X.
\end{equation}
As shown in \cite[Sects.4.1 \& 4.2]{ABE:14}, Weiss--Staffans perturbations of the type $P=BC$ cover in particular Miyadera--Voigt- (where $U=X$ and $B=Id$ and $Z=D(A)$) and Desch--Schappacher perturbations (where $U=X$ and $C=Id$)  which are, e.g.,  studied in detail in \cite[Sect.III.3]{EN:00}.

\smallbreak
For these two perturbation results there exist simplified versions due to Desch--Voigt and Batkai et al., respectively, in case $A$ generates a positive semigroup on a Banach lattice $X$, and the perturbation 
\begin{itemize}
\item $P=C:D(A)\to X$ is positive and $X$ is an AL-space, see \cite{Des:88} and \cite{Voi:89}, and
\item $P=B:X\to\Xmo$ is positive and $X$ is an AM-space, see \cite[Thm.1.2]{BJVW:18},
\end{itemize}
respectively. More precisely, in these works a spectral condition concerning the input-output operator $\F$ of the system \eqref{csu} mapping the input $u(\cdot)$ to its output $y(\cdot)$ is replaced by a resolvent condition on $CR(\lambda,A_{-1})B$ which in many applications is much easier to verify.

\smallbreak
The aim of this note is twofold. First we generalize these simplified versions to operators $P=BC$ where instead of the previous setting we impose that
\begin{itemize}
\item $B:U\to X$ and $C:D(A)\to U$ are positive and $U$ is an AL-space, 
or
\item $B:U\to\Xmo$ and $C:X\to U$ are positive and $U$ is an AM-space, 
\end{itemize}
respectively, where in both cases $X$ can be an arbitrary Banach lattice.
Secondly, we give a simplified version of the Weiss-Staffans type perturbation result \cite[Thm.10]{ABE:14} in case $A$ generates a positive semigroup on a Banach lattice $X$, and
\begin{itemize}
\item $B:U\to\Xmo$ and $C:Z\to U$ are positive and $U=\R$, i.e., simultaneously is  of both types AL and AM.
\end{itemize}

\smallbreak
This paper is organized as follows. In \Cref{GS} we introduce our general setup and prove an admissibility result for the control operator $B$. \Cref{MR} contains the main results of this note: 
\begin{itemize}
\item \Cref{AM} on positive structured perturbations factorized via AM-spaces,
\item \Cref{AL} on positive structured perturbations factorized via AL-spaces
\item \Cref{PT} on positive structured perturbations factorized via $\R$, and
\item \Cref{thm:Dom} on structured perturbation via domination.
\end{itemize}
In \Cref{sec:App} we show the power of our approach and apply these results to boundary perturbations of domains of generators, a heat equation with boundary feedback and perturbations of the first derivative.  These examples show in particular the great advantage it makes assuming that $U$ instead of $X$ has to be an AM- or AL-space as needed in the previous works \cite{Des:88,Voi:89,BJVW:18}.

\smallbreak
For related results on perturbation of generators of positive $C_0$-semigroups we refer to \cite{Are:87b,AR:91,BA:06,GTK:19,EGa:23}.

\section{The General Setup}\label{GS}

To start with we introduce the main objects of our investigations.

\begin{assumption}\label[assumption]{MA}
If not stated otherwise, in the sequel we consider
\begin{enumerate} [label=(\roman*)]
\item a Banach lattice $X$ called the \emph{state space};
\item a Banach lattice $U$ called the \emph{control/observation space};
\item a \emph{state operator} $A:D(A)\subset X\to X$ generating a positive $C_0$-semigroup $(T(t))_{t\geq0}$ on $X$;
\item a \emph{control operator} $B:U\to\Xmo$; 
\item an  \emph{observation operator} $C:Z\to U$, where $Z$ is a vector space satisfying
\begin{equation}\label{incl}
D(A)\subseteq Z\subseteq X.
\end{equation}
\end{enumerate}
\end{assumption} 

Here the extrapolation space $\Xmo$ of $X$ associated to the operator $A$ is given by the completion $(X,\lVert\cdot\rVert_{-1})^{\sim}$ where $\lVert x\rVert_{-1}:=\lVert R(\lambda,A)x\rVert$ for some fixed $\lambda\in\rho(A)$. Then for any $t\ge0$ the operator $T(t)$ possesses a unique continuous extension $T_{-1}(t)\in\sL(\Xmo)$ and $(T_{-1}(t))_{t\ge0}$ is a $C_0$-semigroup on $\Xmo$ with generator $A_{-1}$ satisfying $D(A_{-1})=X$. This definition does not depend on the particular choice of $\lambda\in\rho(A)$ since the norms $\lVert R(\lambda,A)\cdot\rVert$ are equivalent for any $\lambda\in\rho(A)$. For a detailed treatment of these facts we refer to \cite[Chap.II.5]{EN:00}.

\smallbreak
In our case $X$ is a Banach lattice, and it is a priori not clear how to extend the concept of positivity to $\Xmo$. If $A$ generates a positive  $C_0$-semigroup on $X$, we follow \cite[Def.2.1]{BJVW:18} and define the positive cone in $\Xmo$ by
\begin{equation*} 
X_{-1,+}:=\overline{X_+}^{\,\|\cdot\|_{-1}},
\end{equation*}
where $X_+$ denotes the positive cone of $X$ (see Appendix \ref{appa}). It is immediate that $X_+\subset X_{-1,+}$. If $X$ is a real Banach lattice, then  by \cite[Prop.2.3]{BJVW:18} we also have $X_{+}=X_{-1,+}\cap X$. Moreover, by \cite[Rem.2.2]{BJVW:18} the following holds.

\begin{lemma} \label{L:B-pos}
Let $A$ be the generator of a positive semigroup on the Banach lattice $X$.
Then the operator $B:U\to \Xmo$ is positive, i.e.\  $BU_+\subseteq X_{-1,+}$,  if and only if $R(\lambda,A_{-1})B:U\to X$ is positive for all $\lambda>\spb(A)$.
\end{lemma}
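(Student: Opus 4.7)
The plan is to prove the two implications separately, relying on two standard facts: (i) the resolvent $R(\lambda,A)$ is positive for $\lambda>\spb(A)$, obtained from the Laplace transform of the positive semigroup $(T(t))_{t\ge0}$; and (ii) $\lambda R(\lambda,A_{-1})$ is an approximate identity as $\lambda\to\infty$ on $\Xmo$, since $A_{-1}$ generates the extrapolated $C_0$-semigroup $(T_{-1}(t))_{t\ge0}$ there.

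For the direction ``$B$ positive $\Rightarrow R(\lambda,A_{-1})B$ positive'', the key step is to show that the extended resolvent $R(\lambda,A_{-1})\colon\Xmo\to X$ maps $X_{-1,+}$ into $X_+$. On $X\subset\Xmo$ it coincides with $R(\lambda,A)$, which is positive, so it sends $X_+$ into $X_+$. Since $R(\lambda,A_{-1})$ is continuous from $\Xmo$ to $X$, and since $X_+$ is norm-closed in $X$, passing to the $\|\cdot\|_{-1}$-closure yields $R(\lambda,A_{-1})X_{-1,+}\subseteq X_+$. Composing with the positive operator $B$ gives the positivity of $R(\lambda,A_{-1})B\colon U\to X$.

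For the converse, I would pick $u\in U_+$ and aim to show $Bu\in X_{-1,+}$. The natural tool is the convergence
\begin{equation*}
\lambda R(\lambda,A_{-1})y\longrightarrow y\quad\text{in }\Xmo\text{ as }\lambda\to\infty,\qquad y\in\Xmo,
\end{equation*}
which is standard semigroup theory applied to the generator $A_{-1}$ on $\Xmo$. Taking $y=Bu$ produces a net $\lambda R(\lambda,A_{-1})Bu$ lying in $X_+$ (by hypothesis, since the factor $\lambda>\spb(A)$ is positive and $R(\lambda,A_{-1})Bu\in X_+$) and converging in $\Xmo$ to $Bu$. Hence $Bu\in\overline{X_+}^{\,\|\cdot\|_{-1}}=X_{-1,+}$, so $B$ is positive.

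The main, rather mild, obstacle is keeping the two norms straight: the closure defining $X_{-1,+}$ is taken in $\|\cdot\|_{-1}$, whereas in the forward direction one needs the positive cone to be closed in $\|\cdot\|_X$ together with the continuity of $R(\lambda,A_{-1})\colon\Xmo\to X$; symmetrically, in the converse the approximate-identity property must be applied in $\Xmo$ and not in $X$, since $Bu$ only lies in the extrapolation space.
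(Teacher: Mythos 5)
Your proof is correct, and it is the standard argument; the paper itself does not prove this lemma but quotes it from \cite[Rem.2.2]{BJVW:18}, whose justification runs along exactly the lines you describe (closedness of $X_+$ in $X$ plus boundedness of $R(\lambda,A_{-1}):\Xmo\to X$ for one direction, the approximate identity $\lambda R(\lambda,A_{-1})\to Id$ in $\Xmo$ plus closedness of $X_{-1,+}$ for the other). Two cosmetic points: positivity of $R(\lambda,A)$ for \emph{all} real $\lambda>\spb(A)$, rather than only $\lambda>\omega_0(A)$ where the Laplace integral converges absolutely, is the classical result on resolvent positivity for positive semigroups and deserves a citation rather than just ``Laplace transform''; and in the converse direction you should take $\lambda>\max\{0,\spb(A)\}$ so that the scalar factor $\lambda$ is indeed positive (harmless, since only large $\lambda$ matter).
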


Throughout this note we denote by $|\cdot|_{\R}$ an arbitrary norm on $\R$, in contrast to the absolute value which will be denoted by $|\cdot|$. In case we have to be more specific, $|\cdot|_p$ denotes the $p$-norm on $\R$ for $1\le p\le\infty$.  Finally, if $U=\R$ and $p=1$ or $p=\infty$ we use the notations $\V:=(\R,\lvert\cdot\rvert_1)$ and $\W:=(\R,\lvert\cdot\rvert_\infty)$.

\subsection{Admissibility}\label{ssec:adm}

In this subsection we recall some standard notions from linear systems theory, cf.\ \cite[Chap.10]{Sta:05} and the references therein, which are essential for our approach.

\begin{definition}
The control operator $B:U\to\Xmo$ is said to be $p$-\emph{admissible} for $1\le p<\infty$ if there exists $t>0$ such that
\begin{equation}\label{boo}
\int_{0}^{t}T_{-1}(t-s)Bu(s)\,ds\in X\quad\text{for all}\,\,u\in\lpu.
\end{equation}
\end{definition}

This means that the \emph{controllability map} $\mathcal{B}_{t}:\lpu\to \Xmo$ given by
\begin{equation}\label{btpo}
\mathcal{B}_{t}u:=\int_{0}^{t}T_{-1}(t-s)Bu(s)ds,\quad u\in\lpu
\end{equation}
has range $\rg(\mathcal{B}_{t})\subseteq X$.  Since the operator $\mathcal{B}_t:\lpu\to \Xmo$ is bounded and $X$ is continuously embedded in $\Xmo$, by the Closed Graph Theorem it follows that $\mathcal{B}_t:\lpu\to X$ is bounded (see \cite[Cor.B.7]{EN:00}).

\smallbreak

In case $p=\infty$ we give the following definition.

\begin{definition}
The control operator $B:U\to\Xmo$ is said to be $\infty$-\emph{admissible} if there exists $t>0$ such that
\begin{equation}\label{boo1}
\int_0^{t}T_{-1}(t-s)Bu(s)ds\in X\quad\text{for all}\,\,u\in\cou.
\end{equation}
\end{definition}

As before, this definition can be equivalently rephrased by saying that the controllability map $\mathcal{B}_{t}:\cou\to \Xmo$ given by
\begin{equation}\label{bto}
\mathcal{B}_{t}u:=\int_{0}^{t}T_{-1}(t-s)Bu(s)ds,\quad u\in\cou
\end{equation}
has range $\rg(\mathcal{B}_{t})\subseteq X$. By the same reasoning as above, this implies that $\mathcal{B}_t:\cou\to X$ is bounded.
In \Cref{pDS} we will verify the $\infty$-admissibility of a control operator $B$ by using the following result.

\begin{proposition}\label{dens}
The control operator $B:U\to\Xmo$ is $\infty$-admissible if
there exists $t>0$, a subspace $\mathcal{D}\subset \rL^\infty([0,t],U)$ and $M\ge0$ such that $\cou\subseteq\overline{\mathcal{D}}$ and
\begin{equation*} 
\begin{aligned}
	&(i)\quad \int_{0}^{t}T_{-1}(t-s)Bu(s)\,ds\in X\,\,&&\text{for all}\,\,u\in\mathcal{D},\\
	&(ii)\quad \left\lVert\int_{0}^{t}T_{-1}(t-s)Bu(s)\,ds\right\rVert_X\le M\lVert u\rVert_\infty\,\,&&\text{for all}\,\,u\in\mathcal{D}.
\end{aligned}
\end{equation*}
Moreover, in this case $\|\sBt\|_{\sL(\cou,X)}\le M$.
\end{proposition}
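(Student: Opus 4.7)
The plan is to define $\sBt$ first on $\mathcal{D}$ using the formula, extend it by density to $\cou$, and then identify the extension with the original integral expression via comparison in the ambient space $\Xmo$.

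\textbf{Step 1: Extension from $\mathcal{D}$ to $\cou$ in $X$.} By hypothesis $(i)$ the map $\sBt u := \int_0^t T_{-1}(t-s)Bu(s)\,ds$ takes $\mathcal{D}$ into $X$, and by $(ii)$ it satisfies $\|\sBt u\|_X \le M\|u\|_\infty$ for $u\in\mathcal{D}$. Hence $\sBt|_{\mathcal{D}}:\mathcal{D}\to X$ is a bounded linear operator (with respect to the $\rL^\infty$-norm). Given $u\in\cou$, choose a sequence $(u_n)\subset\mathcal{D}$ with $u_n\to u$ in $\rL^\infty([0,t],U)$, which is possible since $\cou\subseteq\overline{\mathcal{D}}$. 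Then $(\sBt u_n)$ is Cauchy in $X$, so converges to some $x\in X$ with $\|x\|_X\le M\|u\|_\infty$.

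\textbf{Step 2: Identification of the limit.} Since $B\in\sL(U,\Xmo)$ and $(T_{-1}(s))_{s\ge0}$ is a $C_0$-semigroup on $\Xmo$, the map
\[
\sBt:\rL^\infty([0,t],U)\to\Xmo,\qquad \sBt u=\int_0^t T_{-1}(t-s)Bu(s)\,ds,
\]
is well-defined and bounded (the integrand is strongly measurable and uniformly bounded in $\Xmo$, hence Bochner integrable). Therefore $\sBt u_n\to\sBt u$ in $\Xmo$. But in view of Step 1 we also have $\sBt u_n\to x$ in $X$, and since $X\hookrightarrow\Xmo$ continuously, the same convergence holds in $\Xmo$. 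Uniqueness of limits gives $\sBt u=x\in X$, so $\int_0^t T_{-1}(t-s)Bu(s)\,ds\in X$ for every $u\in\cou$, which is \eqref{boo1}.

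\textbf{Step 3: Norm estimate.} The inequality obtained in Step 1, $\|\sBt u\|_X\le M\|u\|_\infty$ for all $u\in\cou$, together with the Closed Graph argument already mentioned after~\eqref{bto}, yields $\|\sBt\|_{\sL(\cou,X)}\le M$. No single step is a serious obstacle; the only subtle point is keeping track of in which space the convergence takes place, which is resolved by comparing the two limits in the weaker topology of $\Xmo$.
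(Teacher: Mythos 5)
Your proof is correct and follows essentially the same route as the paper's: define $\sBt$ on $\mathcal{D}$, use (i) and (ii) to extend it boundedly to $\overline{\mathcal{D}}\supseteq\cou$, and identify the extension with the $\Xmo$-valued integral by comparing the two limits in the weaker topology of $\Xmo$ via the continuous embedding $X\hookrightarrow\Xmo$. Nothing to add.
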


\begin{proof}
In any case, we can define the bounded linear operator $\sBt:\rL^\infty([0,t],U)\to\Xmo$ as in \eqref{bto} for $u\in \rL^\infty([0,t],U)$. Now assumptions (i) and (ii) imply that the restriction $\sBt|_{\sD}:\sD\to X$ is bounded of bound $M$, hence possesses a unique bounded extension $\sR:\overline{\sD}\to X$ of the same bound.
Since by assumption $\cou\subseteq\overline{\mathcal{D}}$, for every $u\in\cou$ there exists a sequence $(u_n)_{n\in\NN}\subset\sD$ converging to $u$ in $\rL^\infty([0,t],U)$. Then $\sBt u_n\to \sBt u$ in $\Xmo$ and $\sBt u_n=\sR u_n\to\sR u$ in $X$ as $n\to+\infty$. This implies $\sBt u=\sR u\in X$ for all $u\in\cou$, i.e., $B$ is $\infty$-admissible.
\end{proof}

\begin{remark}\label{badm}
Condition \eqref{boo} becomes less restrictive as $p\in[1,\infty)$ grows. In particular, \eqref{boo1} is weaker than \eqref{boo} for any $p\in[1,\infty)$.
Moreover, a bounded control operator $B:U\to X$ is always $1$-admissible, hence also $p$-admissible for every $p\in(1,+\infty]$. 
\end{remark}

For observation operators we recall an analogous notion
\begin{definition}
The observation operator $C:Z\to U$ is said to be $p$-\emph{admissible} for $1\le p<\infty$ if there exists $t>0$ and $M\ge0$ such that
\begin{equation}\label{coo}
\int_{0}^{t}\bigl\| CT(s)x\bigr\|_{U}^p\,ds\le M\cdot\lVert x\rVert_X^p\quad\text{for all}\,\,x\in D(A).		
\end{equation}
\end{definition}

Since $D(A)$ is dense in $X$, the previous definition implies the existence of a bounded \emph{observability map} $\mathcal{C}_t:X\to\lpu$ satisfying $\lVert\mathcal{C}_t\rVert\le M$ and
\begin{equation}\label{cto}
(\mathcal{C}_t x)(s):=CT(s)x,\quad x\in D(A),\,\,s\in[0,t].
\end{equation}

Note that condition \eqref{coo} becomes more restrictive as $p\in[1,\infty)$ grows.

\begin{remark}\label{cadm}		
For the operator $C:Z\to U$ one could also introduce the concept of $\infty$-admissibility, i.e.,  ask for the existence of $t>0$ and $M\ge0$ satisfying
\[\sup_{s\in[0,t]}\bigl\|CT(s)x\bigr\|_U\le M\lVert x\rVert_X\quad\text{for all $x\in D(A)$}.\]
However, it is easy to see that $C$ is $\infty$-admissible if and only if $C\in\sL(X,U)$. In this case, the above estimate extends by density to all $x\in X$.
\end{remark}

A last notion we need in the sequel is the \emph{compatibility} of the triple $(A,B,C)$.

\begin{definition}\label{def2}
The triple $(A,B,C)$ of a state operator $A:D(A)\subseteq X\to X$, a control operator $B:U\to\Xmo$ and an observation operator $C:Z\to U$ is said to be \emph{compatible} if for some $\lambda\in\rho(A)$ one has $\rg(R(\lambda,A_{-1})B)\subset Z$.
\end{definition}

If this range condition is satisfied for some $\lambda\in\rho(A)$ then by the resolvent identity it holds for all $\lambda\in\rho(A)$.
Since any everywhere defined positive operator between Banach lattices is bounded (see, e.g., \cite[Thm.10.20]{BKR:17}), \eqref{incl}  and \Cref{L:B-pos} imply that for every positive compatible triple $(A,B,C)$ we have
\begin{equation*} 
CR(\lambda,A_{-1})B\in\sL(U)\quad\text{for all $\lambda>\omega_0(A)$}.
\end{equation*}
Here we call $(A,B,C)$ positive if $A$ generates a positive semigroup and $B$, $C$ are positive.

\section{The Main Results}\label{MR}

\subsection{Positive Structured Perturbations factorized via AM-Spaces}\label{pDS}
In this section we give a generalization of a positive perturbation theorem due to Bátkai et al. in \cite{BJVW:18}, cf. \Cref{rem:DS-pos}. With respect to our setting we introduced in \Cref{GS} we assume throughout this subsection that $D(C)=Z=X$ which implies that $C\in\sL(X,U)$ is bounded.

\begin{theorem}\label{AM}
Let $A:D(A)\subseteq X\to X$ be the generator of a positive  $C_0$-semigroup $(T(t))_{t\ge0}$ on a Banach lattice $X$. Assume that $B:U\to\Xmo$ and $C:X\to U$ are positive linear operators, where $U$ is an AM-space. If the spectral radius $\sr(CR(\lambda,A_{-1})B)<1$ for some $\lambda>\omega_0(A)$, then the operator
\begin{equation}\label{abcam}
	\begin{aligned}
\hspace{3cm} A_{BC}&:=\left(A_{-1}+BC\right)|_{X},\\
D(A_{BC})&:=\bigl\{x\in X: A_{-1}x+BCx\in X\bigr\}
 \end{aligned}
 \end{equation}
generates a positive  $C_0$-semigroup $(S(t))_{t\ge0}$ on $X$ satisfying the variation of parameters formula
\begin{equation}\label{VP1}
S(t)x:=T(t)x+\int_{0}^{t}T_{-1}(t-s)\cdot BC\cdot S(s)x\,ds,\quad x\in D(A_{BC}),\,\,t\ge0.
\end{equation}
\end{theorem}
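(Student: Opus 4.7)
The strategy is to verify the hypotheses of a Weiss--Staffans-type generation theorem at $p=\infty$ (as in \cite[Thm.~10]{ABE:14}) and then deduce positivity from the resulting resolvent formula. Three ingredients are required: $\infty$-admissibility of $B$; admissibility of $C$, which is automatic because any everywhere-defined positive operator between Banach lattices is bounded, so $C\in\sL(X,U)$ and by \Cref{badm} is $p$-admissible for every $p$; and a substitute for the usual spectral condition on the closed-loop input--output operator. Under positivity this substitute is precisely $\sr(CR(\lambda,A_{-1})B)<1$, since it forces the convergence of the Neumann series $(I-CR(\lambda,A_{-1})B)^{-1}=\sum_{n\ge0}(CR(\lambda,A_{-1})B)^n$ as a positive operator on $U$ (a property which, by monotonicity of the resolvent in $\lambda$, then persists for all larger real $\lambda$).

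The heart of the argument is the $\infty$-admissibility of $B$, established via \Cref{dens} with $\mathcal D$ the $U$-valued step functions on $[0,t]$. For a constant control $u\equiv u_0$ the identity
\begin{equation*}
\int_0^te^{-\lambda s}T_{-1}(s)Bu_0\,ds=R(\lambda,A_{-1})Bu_0-e^{-\lambda t}T(t)R(\lambda,A_{-1})Bu_0
\end{equation*}
places its left-hand side in $X$, since $R(\lambda,A_{-1})Bu_0\in X$ by \Cref{L:B-pos}; an integration by parts removes the exponential factor and yields $\int_0^tT_{-1}(s)Bu_0\,ds\in X$ with norm controlled by $C(t)\,\|u_0\|_U$. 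Linearity extends this to all step functions. The delicate point is the uniform bound $\|\sBt u\|_X\le M\|u\|_\infty$: positivity of $B$ and $T_{-1}(\cdot)$ reduces the problem to $u=\sum_i\mathbf 1_{[s_{i-1},s_i]}u_i$ with $u_i\in U_+$, and the AM-property of $U$ furnishes $v:=u_1\vee\dots\vee u_n\in U_+$ with $\|v\|_U=\max_i\|u_i\|_U=\|u\|_\infty$. The pointwise domination $u(s)\le v$ then gives $0\le\sBt u\le\int_0^tT_{-1}(r)Bv\,dr$ in $X_{-1,+}$, whose $X$-norm is controlled by $C(t)\|v\|_U=C(t)\|u\|_\infty$ by the constant-control estimate and the monotonicity of the norm on the Banach lattice $X$.

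Once $B$ is $\infty$-admissible and $C$ is bounded, the operator
\begin{equation*}
R_\lambda:=R(\lambda,A)+R(\lambda,A_{-1})B\bigl(I-CR(\lambda,A_{-1})B\bigr)^{-1}CR(\lambda,A)
\end{equation*}
is well defined and positive for all sufficiently large $\lambda$. A direct computation, using the defining identity $(I-CR(\lambda,A_{-1})B)z=CR(\lambda,A)x$ of $z$, verifies $R_\lambda x\in D(A_{BC})$ and $(\lambda-A_{BC})R_\lambda x=x$ for all $x\in X$, so $R_\lambda=R(\lambda,A_{BC})$. Invoking \cite[Thm.~10]{ABE:14} (or, in this positive setting, a Hille--Yosida-type argument applied directly to the resolvent estimates furnished by admissibility) shows that $A_{BC}$ generates a $C_0$-semigroup $(S(t))_{t\ge0}$ on $X$, and the variation-of-parameters formula \eqref{VP1} is the Laplace-inverse of the factorization of $R_\lambda$. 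Positivity of $(S(t))_{t\ge0}$ is immediate from positivity of $R(\lambda,A_{BC})=R_\lambda$ for all large real $\lambda$ via the classical characterization \cite[Thm.~VI.1.8]{EN:00}.

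The main obstacle is the uniform $\rL^\infty$-bound on $\sBt$ for positive step functions: this is exactly the point at which the AM-hypothesis on $U$ cannot be dispensed with, and it is the analog, in our factored setting, of the key lemma underlying \cite[Thm.~1.2]{BJVW:18}. Everything else is either resolvent bookkeeping or a standard appeal to positive-semigroup theory.
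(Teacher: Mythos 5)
Your treatment of the $\infty$-admissibility of $B$ is essentially the paper's own argument (Lemma~\ref{binf}): step functions, domination of a positive step control by the supremum $v$ of its values, the AM-property to identify $\|v\|_U$ with $\|u\|_\infty$, and the resulting bound $0\le\sBt u\le -A_{-1}^{-1}Bv$. That part is fine. The gap is in the final step, where you pass from the candidate resolvent formula $R_\lambda$ to generation. Neither of the two routes you offer actually closes the argument. Invoking \cite[Thm.10]{ABE:14} requires verifying that theorem's hypothesis on the closed-loop \emph{input--output operator} $\F$ acting on a function space over $[0,+\infty)$ — namely that $1\in\rho(\F)$ (equivalently, a uniform bound on the inverses $(Id-\F_t)^{-1}$) — and this is \emph{not} the same as, and does not follow from, the convergence of the Neumann series of $(Id-CR(\lambda,A_{-1})B)^{-1}$ on $U$. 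Bridging exactly this gap is the technical core of the paper: Lemma~\ref{fin} proves by an inductive domination argument (using positivity and the AM-property a second time, now for the iterates) that $\|\F^n\|_{\sL(C_0)}\le\|(CA_{-1}^{-1}B)^n\|_{\sL(U)}$, whence $\sr(\F)\le\sr(CA_{-1}^{-1}B)<1$ and $Id-\F$ is invertible on $\cuor$. Only then can one form $S(t):=T(t)+\sBt(Id-\F)^{-1}\mathcal{C}_\infty$, check via Lemma~\ref{ltam} that its Laplace transform is $R(\lambda,A_{BC})$, and conclude by \cite[Thm.3.1.7]{ABHN:11}. Your proposal never constructs or controls $\F$.

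Your fallback, a ``Hille--Yosida-type argument applied directly to the resolvent estimates furnished by admissibility,'' also fails: admissibility of $B$ and boundedness of $C$ give at best a first-order estimate $\|R_\lambda\|\le M/\lambda$, whereas Hille--Yosida requires $\|R_\lambda^n\|\le M/\lambda^n$ for \emph{all} $n$, and positivity of $R_\lambda$ alone does not supply this (resolvent-positive operators need not generate $C_0$-semigroups). In the absence of a dominating generator (as in Theorem~\ref{thm:Dom}) the only available route to the higher-order estimates is the explicit semigroup formula, which brings you back to needing $Id-\F$ invertible. So the proposal is structurally incomplete: it establishes the right admissibility lemma but omits the passage from the scalar-level spectral condition $\sr(CR(\lambda,A_{-1})B)<1$ to the function-space-level condition $\sr(\F)<1$, which is precisely what the positivity and AM hypotheses are for beyond admissibility.
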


\begin{remark}\label{rem:DS-pos}
As already mentioned, this result generalizes \cite[Thm.1.2]{BJVW:18} where it is assumed that $X=U$ and $C=Id$. In particular, in \cite{BJVW:18}  one needs $X$ to be an AM-space while in \Cref{AM} only the boundary space $U$ has to be of the type AM while $X$ is allowed to be an arbitrary Banach lattice. See \Cref{subsec:HE} for an example where this generalization is crucial.
\end{remark}

In order to prove this theorem we need some preparation. First, by rescaling and in virtue of \cite[Sect.II.2.2]{EN:00} we can in the sequel always assume that $\omega_0(A)<0$ and take $\lambda=0$.

\smallbreak
Next, under the assumptions of \Cref{AM} the observation operator $C:X\to U$ is bounded, hence $\infty$-admissible for all $t>0$, cf.\ \Cref{cadm}. The following result shows that the control operator $B:U\to\Xmo$ is $\infty$-admissible as well. To verify this claim we use the spaces $\cuor$ of all continuous $U$-valued functions vanishing at infinity endowed by the $\infty$-norm and $\tu$
 of all $U$-valued step functions defined on $[0,t]$ for some $t>0$. Note that the latter is a normed sublattice of $\rL^\infty([0,t],U)$, and its closure in $\rL^\infty([0,t],U)$ contains $\cou$. 

\begin{lemma}\label{binf}
If $B:U\to\Xmo$ is a positive linear operator, where $U$ is an AM-space, then $B$ is $\infty$-admissible for any $t>0$ and $\mathcal{B}_t:\cou\to X$ defined by \eqref{bto} is positive and satisfies
\begin{equation}\label{bt}
	\lVert\mathcal{B}_t\rVert_{\sL(C,X)}\le\bigl\lVert A_{-1}^{-1}B\bigr\rVert_{\sL(U,X)}\quad\text{for all}\,\,t>0.
\end{equation}
\end{lemma}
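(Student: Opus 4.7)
The plan is to apply \Cref{dens} with $\sD := \tu$, whose closure in $\rL^\infty([0,t],U)$ contains $\cou$. After the rescaling assumption $\omega_0(A) < 0$ already announced in preparation of \Cref{AM}, \Cref{L:B-pos} (with $\lambda = 0$) guarantees that $-A_{-1}^{-1}B = R(0,A_{-1})B : U \to X$ is a positive operator, and the Laplace transform representation gives $-A_{-1}^{-1}Bu = \int_0^\infty T_{-1}(s)Bu\,ds$ in $X$ for every $u\in U$. A change of variable on a single step function yields
\[
\sBt\bigl(\one_{[a,b)}v\bigr) = A_{-1}^{-1}\bigl(T_{-1}(t-a) - T_{-1}(t-b)\bigr)Bv \in \rg(A_{-1}^{-1}) = X,
\]
so $\sBt(\sD)\subseteq X$ by linearity.

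For the norm bound I first treat a positive step function $u = \sum_{i=1}^n \one_{[t_{i-1},t_i)}u_i$ with $u_i\in U_+$. The AM-property of $U$ enters exactly at this point: the lattice supremum $u^* := \bigvee_{i=1}^n u_i\in U_+$ satisfies $\|u^*\|_U = \max_i\|u_i\|_U = \|u\|_\infty$. Since $B$ and each $T_{-1}(s)$ preserve $X_{-1,+}$ and $u(s)\le u^*$ pointwise, the chain
\[
0 \le \sBt u \le \int_0^t T_{-1}(r)Bu^*\,dr \le \int_0^\infty T_{-1}(r)Bu^*\,dr = -A_{-1}^{-1}Bu^*
\]
holds in $X_{-1,+}$ (the middle step by monotonicity of the integral of a positive integrand, the right equality by the Laplace formula). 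Both $\sBt u$ and $-A_{-1}^{-1}Bu^*$ lie in $X$, so the identity $X_+ = X_{-1,+}\cap X$ from \cite[Prop.2.3]{BJVW:18} upgrades the chain to $X_+$, and the lattice-norm property gives $\|\sBt u\|_X \le \|A_{-1}^{-1}B\|_{\sL(U,X)}\|u^*\|_U \le \|A_{-1}^{-1}B\|_{\sL(U,X)}\|u\|_\infty$.

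The general step-function case reduces to the positive one by the pointwise decomposition $u = u^+ - u^-$: each $u^\pm$ is a positive step function of sup-norm at most $\|u\|_\infty$, so $\sBt u^\pm \ge 0$ and $|\sBt u| \le \sBt u^+ + \sBt u^- = \sBt|u|$ in $X_+$, yielding the same bound. \Cref{dens} then delivers both $\infty$-admissibility and the asserted norm estimate for every $t>0$, and positivity of the extension $\sBt:\cou\to X$ follows by approximating any non-negative $u\in\cou$ by positive step functions (piecewise-constant on uniform partitions) together with closedness of $X_+$. The main obstacle is that $X_{-1}$ need not be a Banach lattice, so absolute values there are unavailable; the AM-hypothesis on $U$ is invoked precisely to dominate $u$ by a single element $u^*\in U_+$, allowing the resulting $X_{-1,+}$-bound to be transferred back to the Banach lattice $X$ via $X_+ = X_{-1,+}\cap X$.
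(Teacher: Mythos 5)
Your proof is correct and follows the same skeleton as the paper's: \Cref{dens} applied to $\sD=\tu$, the explicit computation showing $\sBt(\sD)\subseteq X$, domination of a positive step function by $\ub=\sup_n u_n$, the AM-property to identify $\|\ub\|_U$ with $\|u\|_\infty$, and the reduction of the general case via $u=u^+-u^-$ and $|\sBt u|\le\sBt|u|$. The only divergence is in how the key estimate $0\le\sBt u\le -A_{-1}^{-1}B\ub$ is justified: the paper regularizes with $\lambda R(\lambda,A_{-1})$ so that all order comparisons take place in the Banach lattice $X$ and then passes to the limit $\lambda\to+\infty$ (using \cite[Prop.A.3]{EN:00}), whereas you perform the comparison directly in $X_{-1}$ with respect to the closed convex cone $X_{-1,+}$ (which is preserved by $T_{-1}(t)$ and by integration) and then transfer back to $X_+$ via the identity $X_+=X_{-1,+}\cap X$ from \cite[Prop.2.3]{BJVW:18}; both routes are valid, yours trading the limiting argument for that cited identity.
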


\begin{proof} This proof follows in part the ones of \cite[Prop.4.2 and Lem.4.3.(ii)]{BJVW:18}.
We verify the conditions (i) and (ii) of \Cref{dens} for $\mathcal{D}=\tu$. Fix $t>0$ and take $u\in\tu$, i.e.,
\[u(s):=\sum_{n=1}^{N}\one_{I_n}(s)\,u_n,\quad s\in[0,t],\]
where $u_1,\dots,u_N\in U$, $I_1,\dots,I_N\subseteq[0,t]$ are pairwise disjoint intervals such that $\dot\bigcup_{n=1}^N I_n=[0,t]$ and $\one_{I_n}$ is the characteristic function of $I_n$. Then
\begin{equation*}
\begin{split}
\int_{0}^{t}T_{-1}(t-s)Bu(s)\,ds
&=\sum_{n=1}^{N}\int_{t_{n-1}}^{t_n}T_{-1}(t-s)Bu_n\,ds,
\end{split}
\end{equation*}
where $t_{n-1}\leq t_n$ denote the endpoints of $I_n$. Since
\begin{equation}\label{eq:Bt-sf-cont}
\int_{t_{n-1}}^{t_n}T_{-1}(t-s)Bu_n\,ds
=\bigl(T(t-t_{n-1})-T(t-t_n)\bigr)A_{-1}^{-1}Bu_n\in X
\end{equation}
we obtain (i), i.e., $\mathcal{B}_t:\tu\to X$ is well-defined. 

\smallbreak
In order to prove condition (ii), we first assume that $0\le u\in\tu$, i.e., $u_n\ge0$ for all $n=1,\dots,N$. Next define
\[\ub:=\sup_{n=1,\dots,N}u_n\ge0\]
which satisfies $0\le u(s)\le \ub$ for all $s\in[0,t]$. Using the positivity of $\lambda R(\lambda,A_{-1})B$ for $\lambda>\max\{0,\spb(A)\}$ and $(T(t))_{t\ge0}$ we conclude
\begin{equation*}
\begin{split}
0&\le \int_{0}^{t}T(t-s)\lambda R(\lambda,A_{-1})Bu(s)\,ds
\le \int_{0}^{+\infty}T(s)\lambda R(\lambda,A_{-1})B\ub\,ds\\
&=-\lambda R(\lambda,A)A_{-1}^{-1}B\ub
\end{split}
\end{equation*} 
for all $\lambda>\max\{0,\spb(A)\}$. 
Since $\lambda R(\lambda,A)\to Id$ in $X$ and, by \cite[Prop.A.3]{EN:00},
\[
\lambda R(\lambda,A_{-1})T_{-1}(t-s)Bu(s)\to T_{-1}(t-s)Bu(s)
\]
uniformly for $s\in[0,t]$ in $X_{-1}$  as $\lambda\to+\infty$ this implies
\begin{equation}\label{eq:est-Bt}
0\le\mathcal{B}_t u=\int_{0}^{t}T_{-1}(t-s)Bu(s)\,ds\le-A_{-1}^{-1}B\ub.
\end{equation}
Hence,
\begin{equation*}
\begin{split}
\left\lVert \mathcal{B}_tu\right\rVert_X
&\le\bigl\lVert A_{-1}^{-1}B\ub\bigr\rVert_X
\le\bigl\lVert A_{-1}^{-1}B\bigr\rVert_{\sL(U,X)}\cdot\Bigl\lVert\sup_{n=1,\dots,N}u_n\Bigr\rVert_{U}\\
&=\bigl\lVert A_{-1}^{-1}B\bigr\rVert_{\sL(U,X)}\cdot\sup_{n=1,\dots,N}\lVert u_n\rVert_{U}
=\bigl\lVert A_{-1}^{-1}B\bigr\rVert_{\sL(U,X)}\cdot\lVert u\rVert_{\infty},
\end{split}
\end{equation*} 
where in the second line we used that $U$ is an AM-space.
Now take $u\in\tu$ arbitrarily then $|u|\in\tu$ as well, and we conclude
\[
|\mathcal{B}_t u|=\bigl|\mathcal{B}_t(u^+-u^-)\bigr|\le\mathcal{B}_tu^++\mathcal{B}_tu^-=\mathcal{B}_t|u|.
\] 
Hence, for every $u\in\tu$ it follows that
\begin{equation*}
\begin{split}
\|\mathcal{B}_t u\|=\bigl\||\mathcal{B}_t u|\bigr\|\le\bigl\|\mathcal{B}_t |u|\bigr\|\le \bigl\lVert A_{-1}^{-1}B\bigr\rVert_{\sL(U,X)}\cdot\bigl\lVert |u|\bigr\rVert_{\infty}=\bigl\lVert A_{-1}^{-1}B\bigr\rVert_{\sL(U,X)}\cdot\lVert u\rVert_{\infty}
\end{split}
\end{equation*}
proving condition (ii). Hence, by \Cref{dens}, $B$ is $\infty$-admissible and the estimate \eqref{bt} holds. Finally, positivity of $\mathcal{B}_t$ follows from \eqref{eq:est-Bt} since the closure of $\tu$ in $\rL^\infty([0,t],U)$ contains $\cou$, cf. \Cref{dens} and its proof.
\end{proof}

The next preliminary result extends the operator $\sBt$ defined in \eqref{bto} from the interval $[0,t]$ to $\mathbb{R}^+$. 
To this end for $u\in\cuor$ we define $\mathcal{B}_0:=0$ and for $t>0$ (with a little abuse of notation) $\mathcal{B}_t u:=\mathcal{B}_t (u|_{[0,t]})$. In this way we can consider $\mathcal{B}_t:\cuor\to X$.
Recall that without loss of generality we still assume that $\omega_0(A)<0$.

\begin{lemma}\label{too}
Let $B:U\to\Xmo$ be positive. Then,
the operator family $(\mathcal{B}_t)_{t\ge0}\subset\sL(\cuor,X)$ is positive, strongly continuous and uniformly bounded of bound $\|A_{-1}^{-1}B\|_{\sL(U,X)}$.
\end{lemma}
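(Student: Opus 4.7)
The plan is to verify the three claimed properties --- positivity of each $\sBt$, the uniform bound $\|A_{-1}^{-1}B\|_{\sL(U,X)}$, and strong continuity of $t\mapsto\sBt u$ --- one after the other, reducing the first two to \Cref{binf} and handling the third by a density argument.

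Positivity and uniform boundedness are essentially immediate from restriction. Given $u\in\cuor$ and $t\ge0$, the restriction $u|_{[0,t]}$ lies in $\cou$ with $\|u|_{[0,t]}\|_\infty\le\|u\|_\infty$, and is positive whenever $u$ is. Applying \Cref{binf} to $u|_{[0,t]}$ transfers both the estimate $\|\sBt u\|_X\le\|A_{-1}^{-1}B\|_{\sL(U,X)}\|u\|_\infty$ and the positivity $\sBt u\ge 0$ (the latter because $\cou$ was already known to sit inside the closure of $\tu$ on which $\sBt$ was shown positive).

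The genuinely nontrivial point is strong continuity, i.e., for each fixed $u\in\cuor$ the map $t\mapsto\sBt u$ from $[0,+\infty)$ to $X$ should be continuous. I would first prove this for $u$ in the dense subspace $\sD\subset\cuor$ of continuously differentiable functions with compact support in $[0,+\infty)$. For such $u$, noting that $A_{-1}^{-1}B\in\sL(U,X)$ and applying integration by parts based on the identity $T_{-1}(t-r)Bu(r)=-\tfrac{d}{dr}\bigl[T_{-1}(t-r)A_{-1}^{-1}Bu(r)\bigr]+T_{-1}(t-r)A_{-1}^{-1}Bu'(r)$ (valid in $\Xmo$ since $A_{-1}^{-1}Bu(r)\in X=D(A_{-1})$), one obtains the representation $\sBt u=T(t)A_{-1}^{-1}Bu(0)-A_{-1}^{-1}Bu(t)+\int_0^t T(t-r)A_{-1}^{-1}Bu'(r)\,dr$, where $T_{-1}$ has collapsed to $T$ because every quantity on the right sits in $X$. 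Each of the three terms is manifestly continuous in $t$ as an $X$-valued function: the first by strong continuity of $(T(t))_{t\ge0}$, the second by continuity of $u$ together with boundedness of $A_{-1}^{-1}B$, and the third by the standard convolution estimate for $C_0$-semigroups against continuous inputs; moreover the right-hand side vanishes at $t=0$, matching the convention $\mathcal{B}_0=0$.

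Finally, I would extend to arbitrary $u\in\cuor$ by density of $\sD$ together with the uniform bound just established: for any $\varepsilon>0$ pick $u_\varepsilon\in\sD$ with $\|u-u_\varepsilon\|_\infty<\varepsilon$, so that $\|\sBt u-\sBt u_\varepsilon\|_X\le\|A_{-1}^{-1}B\|_{\sL(U,X)}\,\varepsilon$ uniformly in $t\ge0$, and a standard $3\varepsilon$-argument transfers the continuity of $t\mapsto\sBt u_\varepsilon$ to $t\mapsto\sBt u$. The main technical obstacle is the integration by parts step, which formally involves $r$-derivatives of $T_{-1}(t-r)A_{-1}^{-1}Bu(r)$ in $\Xmo$; it is crucial that $A_{-1}^{-1}B$ takes values in $X=D(A_{-1})$ so that the derivative $-A_{-1}T_{-1}(t-r)A_{-1}^{-1}Bu(r)=-T_{-1}(t-r)Bu(r)$ is well-defined in $\Xmo$ and the final integrated expression stays inside $X$.
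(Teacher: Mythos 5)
Your argument is correct. For positivity and the uniform bound you reduce to \Cref{binf} exactly as the paper does, but for strong continuity you take a genuinely different route. The paper works directly with an arbitrary $u\in\cuor$: it introduces the translate $u_{t-r}$ (equal to $u(0)$ on $[0,t-r)$ and to $u(\cdot-t+r)$ afterwards), writes $\mathcal{B}_tu-\mathcal{B}_ru=\mathcal{B}_t(u-u_{t-r})+(T(r)-T(t))A_{-1}^{-1}Bu(0)$, and concludes from the uniform bound together with the uniform continuity of $u$. You instead regularize: you integrate by parts once on the dense subspace of compactly supported $C^1$ functions to obtain the $X$-valued representation $\sBt u=T(t)A_{-1}^{-1}Bu(0)-A_{-1}^{-1}Bu(t)+\int_0^t T(t-r)A_{-1}^{-1}Bu'(r)\,dr$, read off continuity term by term, and then transfer to all of $\cuor$ by a $3\varepsilon$-argument using the uniform bound. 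Both are sound. The paper's translation trick is shorter and avoids having to invoke (and, strictly speaking, justify) the density of compactly supported $C^1$ functions in $\rC_0([0,+\infty),U)$ for a vector-valued target --- a standard mollification fact you assert without proof, and the only point I would ask you to make explicit. In exchange, your integration by parts yields an explicit formula for $\sBt u$ entirely inside $X$, which is essentially the one-step analogue of the identity \eqref{daz} that the paper only develops later in the proof of \Cref{lemmaoo}; so your method previews a computation the paper needs anyway, at the cost of an extra density layer here.
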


\begin{proof} 
By Lemma \ref{binf}, the operator $B$ is $\infty$-admissible for all $t>0$. Moreover, $\mathcal{B}_t$ is positive, and the estimate \eqref{bt} holds for every $t\ge0$ and therefore the family $(\mathcal{B}_t)_{t\ge0}$ uniformly bounded. To show strong continuity we define for $u\in\cuor$ and $0\le r<t$ the translated function $u_{t-r}\in\cuor$ by
\[
u_{t-r}(s):=
\begin{cases}
u(0)&\text{if }0\le s<t-r,\\
u(s-t+r)&\text{if }s\ge t-r.
\end{cases}
\]
Then a simple calculation shows that
\[\mathcal{B}_tu-\mathcal{B}_ru=\mathcal{B}_t(u-u_{t-r})+(T(r)-T(t))A_{-1}^{-1}Bu(0).\]
Clearly, $(T(r)-T(t))A_{-1}^{-1}Bu(0)\to0$ as $t-r\to0$.
Moreover,
\[
\|\mathcal{B}_t(u-u_{t-r})\|_X\le \big\lVert A_{-1}^{-1}B\big\rVert_{\sL(U,X)}\cdot\lVert u-u_{t-r}\rVert_{\infty}.
\]
Since $u\in\cuor$ is uniformly continuous, we conclude
\[
\lim_{t-r\to0}\lVert u-u_{t-r}\rVert_{\infty}=0
\]
which implies strong continuity of the family $(\mathcal{B}_t)_{t\ge0}$.
\end{proof}

 In the sequel we denote by $\tuor$ the space of all the compactly supported $U$-valued step functions on $[0,+\infty)$, i.e., satisfying $\text{supp}(u)\subset[0,b]$  for some $0\le b<+\infty$. In particular, this implies $u(s)=0$ for all $s\ge b$. Moreover,  the closure of $\tuor$ in $\rL^\infty([0,+\infty),U)$ contains $\cuor$.

\begin{lemma}\label{fin}
Let $B:U\to\Xmo$ and $C:X\to U$ be positive operators. Then
\begin{equation*} 
\big(\F u\big)(t):=C\sBt u\quad\text{for }u\in\cuor\text{ and }t\ge0
\end{equation*}
defines a positive operator $\F\in\sL(\cuor)$ satisfying  the estimates
\begin{equation}\label{foon}
\bigl\lVert\F^n\bigr\rVert_{\sL(C_0)}\le\bigl\lVert\left(C A_{-1}^{-1}B\right)^n\bigr\rVert_{\sL(U)}
\quad\text{for every $n\in\mathbb{N}$}.
\end{equation}
\end{lemma}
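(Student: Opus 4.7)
The plan is to verify the three claims in order: that $\F$ is a well-defined bounded operator on $\cuor$, that it is positive, and that the iterated norm estimate \eqref{foon} holds, the last being the main technical point.

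\smallbreak
For the first two, fix $u\in\cuor$. By \Cref{too}, $t \mapsto \sBt u$ is continuous into $X$ with $\|\sBt u\|_X\le \|A_{-1}^{-1}B\|_{\sL(U,X)}\cdot\|u\|_\infty$, so $t \mapsto C\sBt u$ is continuous into $U$ with norm bounded by $\|C\|_{\sL(X,U)}\cdot\|A_{-1}^{-1}B\|_{\sL(U,X)}\cdot\|u\|_\infty$. To verify vanishing at $+\infty$, I use the standing hypothesis $\omega_0(A)<0$: given $\varepsilon>0$ choose $N$ with $\|u(s)\|_U<\varepsilon$ for $s\ge N$, and for $t\ge N$ split
\[
\sBt u = T(t-N)\,\mathcal{B}_N u + \int_N^{t} T_{-1}(t-s)\,B\,u(s)\,ds,
\]
so that the first summand decays exponentially and the second is bounded by $\|A_{-1}^{-1}B\|\cdot\varepsilon$ via \Cref{binf} applied to the translate $u(\cdot+N)$. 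Positivity of $\F$ is then immediate from positivity of $\sBt$ (\Cref{binf}) and of $C$.

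\smallbreak
For \eqref{foon}, set $K := -CA_{-1}^{-1}B \in \sL(U)$; since $-A_{-1}^{-1}B$ is positive by \Cref{L:B-pos} (with $\lambda=0\in\rho(A)$) and $C$ is positive, $K$ is positive. I would prove by induction on $n\in\NN$ that for every positive step function $u\in\tuor$ with values $u_1,\ldots,u_N$ and $\ub := \sup_n u_n$,
\[
0 \le (\F^n u)(t) \le K^n \ub \quad\text{in }U,\ \text{for every }t\ge 0.
\]
For $n=1$ this is exactly \eqref{eq:est-Bt} post-composed with the positive operator $C$. For the inductive step, set $v := \F^{n-1} u$; by hypothesis $v \in \cuor$ is positive with $v(s)\le K^{n-1}\ub$ for every $s$. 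The derivation of \eqref{eq:est-Bt} in \Cref{binf} uses only that the input is dominated pointwise by a single element of $U_+$, so rerunning it with dominator $K^{n-1}\ub$ in place of $\ub$ yields $\sBt v \le -A_{-1}^{-1}B\,K^{n-1}\ub$, whence $(\F v)(t) = C\sBt v \le K^n \ub$.

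\smallbreak
Finally, the AM-property of $U$ gives $\|\ub\|_U = \sup_n\|u_n\|_U = \|u\|_\infty$, and lattice-norm monotonicity then yields $\|\F^n u\|_\infty \le \|K^n \ub\|_U \le \|K^n\|_{\sL(U)}\cdot\|u\|_\infty$ for every positive $u\in\tuor$. For arbitrary $u\in\tuor$ the inequality $|\F^n u|\le \F^n|u|$, valid by positivity of $\F^n$, extends the bound, and density of $\tuor$ in $\cuor$ within $\rL^\infty([0,+\infty),U)$ propagates \eqref{foon} to all of $\cuor$. The main obstacle I anticipate is closing the induction: \Cref{binf} is stated for step functions on $[0,t]$, but after one application $\F u$ is no longer a step function, so one has to reread its proof and notice that the crucial estimate depends only on a pointwise majorant in $U_+$, not on the step-function form. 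Positivity of $K$ is precisely what keeps $K^n\ub$ a legitimate positive majorant at each iteration.
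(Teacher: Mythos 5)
Your proposal is correct and follows essentially the same route as the paper: Lemma~\ref{too} for boundedness and continuity, an induction establishing the pointwise domination $0\le(\F^n u)(t)\le(-CA_{-1}^{-1}B)^n\bar u$ for positive step functions, the AM-property to pass to norms, the modulus inequality $|\F^n u|\le\F^n|u|$ for general step functions, and density of $\tuor$ in $\rL^\infty$ to reach $\cuor$. The only cosmetic differences are that you prove decay at infinity by an $\varepsilon$-splitting instead of via density of compactly supported functions, and that you close the induction by rerunning the $\lambda R(\lambda,A_{-1})$ approximation with the majorant $K^{n-1}\bar u$, whereas the paper applies the positive operator $C\mathcal{B}_t$ to the constant majorant and computes it explicitly --- the obstacle you flag (step functions versus their images under $\F$) is exactly the point the paper handles via \eqref{eq:Bt-sf-cont}, and your resolution is sound.
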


\begin{proof}
Let $u\in\cuor$. Then by \Cref{too} and the boundedness of $C:X\to U$ it is clear that $\F u:[0,+\infty)\to U$ is a continuous and bounded function. Hence, in any case $\F:\cuor\to C_b([0,+\infty),U)$ is a positive and bounded operator, where $C_b([0,+\infty),U)$ denotes the Banach lattice of all continuous, bounded $U$-valued functions on $[0,+\infty)$ equipped with the sup-norm $\|\cdot\|_\infty$. Now assume in addition that $u$ has compact support contained in the interval $[0,b]$. Since $\omega_0(A)<0$ we obtain for $t>b$
\begin{align*}
\bigl(\F u\bigr)(t)&=CT(t-b)\int_0^b T_{-1}(b-s)Bu(s)\,ds\\
&=CT(t-b)\,\mathcal{B}_b u\to0\quad\text{as }t\to+\infty,
\end{align*}
i.e., $\F u\in\cuor$. Since the functions in $\cuor$ having compact support are dense in $\cuor$ this implies $\rg(\F)\subseteq\cuor$, hence $0\le\F\in\sL(\cuor)$ as claimed. 

\smallbreak
In order to show the estimates \eqref{foon}, we first observe that by \eqref{eq:Bt-sf-cont}  it follows that for every $u\in\tuor$ the function $0\le t\mapsto r(t):=C\sBt u\in U$ is continuous and $r(t)\to0$ as $t\to+\infty$. Hence, $r\in\cuor$ which implies that $\F^n u=\F^{n-1}r$ is well-defined for all $u\in\tuor$ and $n\in\NN$. 

\smallbreak
Next we prove by induction that for any $0\le u\in\tuor$ and $n \in\NN$ it holds
\begin{equation}\label{cabn}
0\le\big(\F^n u\big)(t)\le\bigl(-C A_{-1}^{-1}B\bigr)^n\bar{u}\quad\text{for all $t\ge0$},
\end{equation}
where for $u(s):=\sum_{n=1}^{N}u_n\,\one_{I_n}(s)$, $s\in[0,+\infty)$, with pairwise disjoint intervals $I_n$,  we define as above $\ub=\sup_{n=1,\dots,N}u_n\in U_+$. 

For $n=1$ this estimate follows immediately by multiplying \eqref{eq:est-Bt} from the left by $C\ge0$.  
Now assume that \eqref{cabn} holds for some fixed $n\ge1$. Then using that $-A_{-1}^{-1}B\ge0$ we obtain
\begin{equation*}
\begin{split}
0\le\big(\F^{n+1} u\big)(t)
&=C\mathcal{B}_t\big(\F^n u\big)\le C\mathcal{B}_t\bigl(-C A_{-1}^{-1}B\bigr)^n\bar{u}\\
&= C\int_{0}^{t}T_{-1}(s)B\bigl(-C A_{-1}^{-1}B\bigr)^n\bar{u}\,ds\\
&=-C A_{-1}^{-1}B\bigl(-C A_{-1}^{-1}B\bigr)^n\bar{u}-C T(t)\bigl(-A_{-1}^{-1}B\bigr)\bigl(-C A_{-1}^{-1}B\bigr)^n\bar{u}\\
&\le\bigl(-C A_{-1}^{-1}B\bigr)^{n+1}\bar{u},
\end{split}
\end{equation*}
which proves \eqref{cabn} by induction. 
Now take an arbitrary $u\in\tuor$. Then $|u|\in\tuor$ as well, and we conclude
\[
|\F^n u|=\bigl|\F^n(u^+-u^-)\bigr|\le\F^n u^++\F^n u^-=\F^n|u|,
\]
i.e.,  $|\F^n u|(t)\le (\F^n|u|)(t)$ for all $t\ge0$.
Using \eqref{cabn} it follows that
\begin{equation}\label{eq:est-Fin}
\begin{split}
\bigl\|(\F^n u)(t)\bigr\|_U&=\bigl\||\F^n u|(t)\bigr\|_U\le\bigl\|\bigl(\F^n |u|\bigr)(t)\bigr\|_U\\
&\le \bigl\lVert \bigl(CA_{-1}^{-1}B\bigr)^n\bigr\rVert_{\sL(U)}\cdot\lVert \ub\rVert_U
=\bigl\lVert \bigl(CA_{-1}^{-1}B\bigr)^n\bigr\rVert_{\sL(U)}\cdot\lVert u\rVert_{\infty},
\end{split}
\end{equation}
where in the last equality we used the AM-property of $U$. This shows that the operator 
$\F^n:\tuor\subset \rL^\infty([0,+\infty),U)\to\cuor$ is bounded of bound $\lVert(CA_{-1}^{-1}B)^n\rVert_{\sL(U)}$. Since $\cuor$ is complete, $\F^n$ has a unique bounded extension $\sR_n=\F^{n-1}\cdot\sR_1$ to the closure of $\tuor$ in $\rL^\infty([0,+\infty),U)$ having the same bound $\lVert(CA_{-1}^{-1}B)^n\rVert_{\sL(U)}$.  
Now as in the proof of \Cref{dens} it follows that $\sR_1|_{\cuor}=\F$, hence $\sR_n|_{\cuor}=\F^n$ which implies \eqref{foon}.
\end{proof}

Our last preliminary result deals with the invertibility of $Id-\mathcal{F}_\infty$ and the Laplace transform of its inverse. The statement heavily depends on the spectral assumption $\sr(CR(\lambda,A_{-1})B)<1$. In the sequel $\mathscr{L}(\cdot)$ denotes the Laplace transform as introduced in \cite[Sect.1.4]{ABHN:11}. Recall that, as always, we assume $\omega_0(A)<0$. 

\begin{lemma}\label{ltam}
Let $B:U\to\Xmo$ and $C:X\to U$ be positive and
assume that $\sr(CA_{-1}^{-1}B)<1$. Then for any $\lambda>0$ one has $1\in\rho(\F)\cap\rho(CR(\lambda,A_{-1})B)$ and
\begin{equation}\label{L}
\mathscr{L}\big((Id-\F)^{-1}u(\cdot)\big)(\lambda)=\big(Id-CR(\lambda,A_{-1})B\big)^{-1}\cdot\hat{u}(\lambda)
\end{equation}
for all $u\in\cuor$.
\end{lemma}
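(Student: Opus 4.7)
The plan is to split the statement into the two spectral claims and the Laplace identity, and to handle them in this order.

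For the invertibility of $Id-\F$, the estimate \eqref{foon} combined with Gelfand's spectral radius formula yields
$$\sr(\F)=\lim_{n\to\infty}\|\F^n\|^{1/n}\le\lim_{n\to\infty}\bigl\|(CA_{-1}^{-1}B)^n\bigr\|^{1/n}=\sr(CA_{-1}^{-1}B)<1,$$
so $1\in\rho(\F)$ and $(Id-\F)^{-1}=\sum_{n\ge0}\F^n$ converges as a Neumann series in $\sL(\cuor)$. For $1\in\rho(CR(\lambda,A_{-1})B)$ with $\lambda>0$, I would use the resolvent identity $R(\lambda,A_{-1})=-A_{-1}^{-1}-\lambda R(\lambda,A_{-1})(-A_{-1}^{-1})$. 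Since the semigroup is positive and $\lambda>0>\omega_0(A)$, both $R(\lambda,A_{-1})$ and $-A_{-1}^{-1}$ are positive, so this identity forces the domination $0\le R(\lambda,A_{-1})\le -A_{-1}^{-1}$. Positivity of $B$ and $C$ then upgrades this to $0\le CR(\lambda,A_{-1})B\le -CA_{-1}^{-1}B$ in $\sL(U)$, and the standard comparison principle for positive operators on Banach lattices ($\|S^n\|\le\|T^n\|$ when $0\le S\le T$) yields $\sr(CR(\lambda,A_{-1})B)\le\sr(-CA_{-1}^{-1}B)<1$, so the second Neumann series converges as well.

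For the Laplace-transform identity I would first treat the ``$n=1$'' case. By \Cref{too} the map $t\mapsto\sBt u$ is bounded continuous from $[0,+\infty)$ to $X$, hence has a Laplace transform in $X$ for each $\lambda>0$. Fubini in $\Xmo$ together with the semigroup identity $\int_s^\infty e^{-\lambda t}T_{-1}(t-s)\,dt=e^{-\lambda s}R(\lambda,A_{-1})$ gives
$$\mathscr{L}(\sBt u)(\lambda)=\int_0^\infty e^{-\lambda t}\int_0^t T_{-1}(t-s)Bu(s)\,ds\,dt=R(\lambda,A_{-1})B\,\hat u(\lambda)\in X.$$
Pulling the bounded operator $C$ through the (Bochner) integral yields $\mathscr{L}(\F u)(\lambda)=CR(\lambda,A_{-1})B\,\hat u(\lambda)$. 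Because \Cref{fin} shows $\F^n u\in\cuor$, the same identity may be applied iteratively, and an immediate induction gives $\mathscr{L}(\F^n u)(\lambda)=(CR(\lambda,A_{-1})B)^n\hat u(\lambda)$ for every $n\in\NN_0$.

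To finish I would exchange Laplace transform and Neumann summation: for fixed $\lambda>0$ the map $\mathscr{L}(\cdot)(\lambda):\cuor\to U$ is bounded (its norm is at most $1/\lambda$), so the geometric convergence of $\sum_n\F^n u$ in $\cuor$ established in the first step allows termwise transformation, producing
$$\mathscr{L}\bigl((Id-\F)^{-1}u\bigr)(\lambda)=\sum_{n=0}^\infty\bigl(CR(\lambda,A_{-1})B\bigr)^n\hat u(\lambda)=\bigl(Id-CR(\lambda,A_{-1})B\bigr)^{-1}\hat u(\lambda),$$
which is \eqref{L}. The only genuinely delicate point is the Fubini step in the extrapolation space, but this is harmless: for $\lambda>0$ the integrand $(t,s)\mapsto e^{-\lambda t}T_{-1}(t-s)Bu(s)\chi_{\{s\le t\}}$ is Bochner-integrable in $\Xmo$ thanks to the exponential weight $e^{-\lambda t}$ and the growth bound on $(T_{-1}(t))_{t\ge0}$.
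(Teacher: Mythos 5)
Your proposal is correct and follows essentially the same route as the paper's proof: the bound $\sr(\F)\le\sr(CA_{-1}^{-1}B)$ via \eqref{foon} and Gelfand's formula, the domination $0\le CR(\lambda,A_{-1})B\le -CA_{-1}^{-1}B$ from the resolvent identity combined with the comparison principle of \Cref{spr}, the inductive identity $\mathscr{L}(\F^n u)(\lambda)=(CR(\lambda,A_{-1})B)^n\hat u(\lambda)$ via the convolution theorem, and termwise Laplace transformation of the Neumann series. The only cosmetic difference is that you justify the final interchange by the bound $\|\mathscr{L}(\cdot)(\lambda)\|\le 1/\lambda$ on $\cuor$ rather than by invoking the convergence theorem \cite[Thm.1.7.5]{ABHN:11} on the partial sums, which is an equally valid way to conclude.
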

\begin{proof}	
We start by showing that $1\in\rho(CR(\lambda,A_{-1})B)$ for all $\lambda>0$.
By \Cref{L:B-pos} we have $R(\lambda,A_{-1})B\ge0$ for any $\lambda>\spb(A)$. Since $\spb(A)\le\omega_0(A)<0$, the resolvent identity implies
\[0\le CR(\lambda,A_{-1})B=CR(0,A_{-1})B-C\lambda R(\lambda,A)R(0,A_{-1})B\le -CA_{-1}^{-1}B\] 
for all $\lambda>0$. 
Hence, by Proposition \ref{spr} we conclude that for all $\lambda>0$
\begin{equation}\label{spes}
\sr\bigl(C R(\lambda,A_{-1})B\bigr)\le \sr\bigl(-C A_{-1}^{-1}B\bigr)=\sr\bigl(C A_{-1}^{-1}B\bigr)<1,
\end{equation}
In particular this implies $1\in\rho(CR(\lambda,A_{-1})B)$ for all $\lambda>0$ as claimed. 

\smallbreak
Next observe that by \eqref{foon} we have
\begin{equation}\label{eq:est-rF}
\sr(\F)=\lim_{n\to+\infty}\bigl\|\F^n\bigr\|^{1/n}_{\sL(C_0)}\le\lim_{n\to+\infty}\bigl\|(CA_{-1}^{-1}B)^n\bigr\|^{1/n}_{\sL(U)}=\sr(CA_{-1}^{-1}B)<1.
\end{equation}
Hence, $Id-\F$ is invertible and its inverse is given by the Neumann series
\begin{equation}\label{eq:Neu-F_infty}
(Id-\F)^{-1}=\sum_{n=0}^{\infty}\F^n
\end{equation}
which converges in  $\sL(\cuor)$.
It only remains to show \eqref{L}.

To this end we first verify that for all $u\in\cuor$ and $n\in\mathbb{N}$ we have
\begin{equation}\label{lap1n}
\mathscr{L}(\F^n u)(\lambda)=\big(C\,R(\lambda,A_{-1})\,B\big)^n\cdot \hat{u}(\lambda),\quad\lambda>0.
\end{equation}
Indeed, for $n=1$, recalling that $C$ is bounded, for $\lambda>0$ and $u\in\cuor$ we obtain
\begin{equation*}
\begin{split}
\mathscr{L}(\mathcal{F}_\infty u)(\lambda)
&=\int_{0}^{+\infty}e^{-\lambda t}(\mathcal{F}_\infty u)(t)\,dt
\\
&=C\int_{0}^{+\infty}e^{-\lambda t}\left(\int_{0}^{t}T_{-1}(t-s)Bu(s)\,ds\right)\,dt\\
&=C\int_{0}^{+\infty}e^{-\lambda t}\big(T_{-1}(\cdot)B\star u(\cdot)\big)(t)\,dt\\
&=C\cdot\sL\big(T_{-1}(\cdot)B\star u(\cdot)\big)(\lambda)
=C R(\lambda,A_{-1})B\cdot\hat{u}(\lambda),
\end{split}
\end{equation*}
where in the last line we have used the convolution theorem for Laplace transform (see, e.g., \cite[Lemma 3.12]{BE:14}). 
If \eqref{lap1n} holds for some $n\ge1$, then for $u\in\cuor$ and $\lambda>0$ we have
\begin{equation*}
\begin{split}
\mathscr{L}\bigl(\F^{n+1}u\bigr)(\lambda)
&=\int_{0}^{+\infty}e^{-\lambda t}\Big(C\int_{0}^{t}T_{-1}(t-s)B(\F^nu)(s)\,ds\Big)\,dt\\
&=C\int_{0}^{+\infty}e^{-\lambda t}\Big(T_{-1}(\cdot)B\star\F^nu(\cdot)\Big)(t)\,dt\\
&=C\cdot\sL\Big(T_{-1}(\cdot)B\star\F^nu(\cdot)\Big)(\lambda)\\
&=CR(\lambda,A_{-1})B\cdot\sL(\F^nu)(\lambda)
=\big(C\,R(\lambda,A_{-1})\,B\big)^{n+1}\cdot \hat{u}(\lambda),
\end{split}
\end{equation*}
which implies \eqref{lap1n} by induction.
Finally, we use \eqref{lap1n} to prove that \eqref{L} holds for all $u\in\cuor$ and $\lambda>0$. To his aim,  we define the sequence of continuous functions $f_n\in\cuor$, $n\in\mathbb{N}$ by
\begin{equation*} 
f_n:=\sum_{k=0}^{n}\F^ku(\cdot).
\end{equation*}
Using \eqref{lap1n} we immediately obtain that
\[\hat{f}_n(\lambda)=\sum_{k=0}^{n}\bigl(CR(\lambda,A_{-1})B\bigr)^k\cdot\hat{u}(\lambda)
\quad\text{ for all }\lambda>0.
\]
Moreover, norm convergence of the Neumann series \eqref{eq:Neu-F_infty} yields the uniform convergence of $(f_n)_{n\in\NN}$ to
$f:=(Id-\F)^{-1}u(\cdot)$.
By \cite[Thm.1.7.5]{ABHN:11}  this implies
\begin{equation*}
\begin{split}
&\mathscr{L}\big((Id-\F)^{-1}u(\cdot)\big)(\lambda)=\hat{f}(\lambda)=\lim_{n\to+\infty}\hat{f}_n(\lambda)\\
&=\lim_{n\to+\infty}\sum_{k=0}^{n}\bigl(CR(\lambda,A_{-1})B\bigr)^k\cdot\hat{u}(\lambda)
=\big(Id-CR(\lambda,A_{-1})B\big)^{-1}\cdot\hat{u}(\lambda)
\end{split}
\end{equation*}
for all $u\in\cuor$ and $\lambda>0$, i.e., \eqref{L}.
\end{proof}

We have now all the necessary tools to prove the main result of this section.
\begin{proof}[Proof of \Cref{AM}]
We define an operator family $(S(t))_{t\ge0}\subset\sL(X)$ and prove that it is a  $C_0$-semigroups with generator $A_{BC}$ given in \eqref{abcam}.

\smallskip
To this end we first introduce the positive bounded linear operator $\mathcal{C}_\infty:X\to\cuor$ by $(\mathcal{C}_\infty x)(t):=CT(t)x$.
Then we use the invertibility of $Id-\mathcal{F}_\infty$ proved in \Cref{ltam}  to define
\begin{equation}\label{semig}
S(t):=T(t)+\mathcal{B}_t(Id-\mathcal{F}_\infty)^{-1}\mathcal{C}_\infty\in\sL(X)
\quad\text{for $t\ge0$.}
\end{equation}
Now by \Cref{too}, the operator family $(S(t))_{t\ge0}$ is strongly continuous on $X$ and uniformly bounded. 
The same computations as in \cite[Proof of Thm.10]{ABE:14} then show that
\begin{equation*}
\mathscr{L}\bigl(S(\cdot)x\bigr)(\lambda)=R(\lambda,A_{BC})x\quad\text{for all $\lambda>0$ and $x\in X$}.
\end{equation*}
By \cite[Thm.3.1.7]{ABHN:11}, this implies that the operator family \eqref{semig} is a positive  $C_0$-semigroup on $X$ with generator $A_{BC}$ defined in \eqref{abcam}. Finally, the variation of parameters formula \eqref{VP1} follows as in \cite[Thm.10]{ABE:14}.
\end{proof}

\subsection{Positive Structured Perturbations factorized via AL-Spaces}\label{pMV}
The following result generalizes a perturbation theorem for positive semigroups due to Desch \cite{Des:88} and Voigt \cite{Voi:89}, see \Cref{rem:DV-pos}. With respect to our setting from \Cref{GS} we assume throughout this subsection that $\rg(B)\subseteq X$ which implies that $B\in\sL(U,X)$ is bounded.

\begin{theorem}\label{AL}
Let $A:D(A)\subseteq X\to X$ be the generator of a positive  $C_0$-semigroup $(T(t))_{t\ge0}$ on a Banach lattice $X$. Assume $B:U\to X$ and $C:D(A)\to U$ to be positive linear operators, where $U$ is an AL-space.
If the spectral radius $\sr(CR(\lambda,A)B)<1$ for some $\lambda>\omega_0(A)$, then the operator
\begin{equation}\label{abcal}
A_{BC}:=A+BC\quad\text{with domain}\,\,D(A_{BC})=D(A)
\end{equation}
generates a positive  $C_0$-semigroup $(S(t))_{t\ge0}$ on $X$ satisfying the variation of parameters formula
\begin{equation}\label{VP2}
S(t)x:=T(t)x+\int_{0}^{t}T(t-s)\cdot BC\cdot S(s)x\,ds,\quad x\in D(A),\,\,t\ge0.
\end{equation}
\end{theorem}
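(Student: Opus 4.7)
The plan is to mirror the proof of \Cref{AM}, now with the roles of $B$ and $C$ essentially swapped: the control operator $B\in\sL(U,X)$ is bounded and hence trivially $1$-admissible, so the work is to establish a $1$-admissibility of $C$ and a norm estimate for the input–output map $\F$ on $\rL^1([0,+\infty),U)$, using the AL-property of $U$ (additivity of the norm on the positive cone) in place of the AM-property used in the previous subsection. After rescaling I may assume $\omega_0(A)<0$ and take $\lambda=0$, so that $-A^{-1}:X\to X$ is a bounded positive operator and $-CA^{-1}:X\to U$ is everywhere defined and positive, hence bounded.

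First I would prove the $1$-admissibility of $C$, which plays the role of \Cref{binf}. For $0\le x\in D(A)$, positivity of $C$, $T(\cdot)$ and $-A^{-1}$, together with $\int_0^t T(s)x\,ds=-A^{-1}(x-T(t)x)\in D(A)$, yield
\begin{equation*}
0\le\int_0^t CT(s)x\,ds=C\!\int_0^t T(s)x\,ds\le -CA^{-1}x.
\end{equation*}
The AL-property then turns this pointwise bound into the norm estimate
\begin{equation*}
\int_0^t\|CT(s)x\|_U\,ds=\Bigl\|\int_0^t CT(s)x\,ds\Bigr\|_U\le\|CA^{-1}\|_{\sL(X,U)}\cdot\|x\|_X,
\end{equation*}
and the lattice inequality $|CT(s)x|\le CT(s)|x|$ together with monotonicity of $\|\cdot\|_U$ covers general $x\in D(A)$. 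By density this yields a positive bounded observability map $\mathcal{C}_\infty:X\to\rL^1([0,+\infty),U)$, while the positive bounded controllability map $\sBt:\rL^1([0,t],U)\to X$ needs no further work.

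Next I would define $\F u:=C\sBt u(\cdot)$ first on step functions $u\in\tuor$, for which a calculation as in \eqref{eq:Bt-sf-cont} shows $\sBt u\in D(A)$ so that $C\sBt u$ makes literal sense, and I would derive the estimate $\|\F^n\|_{\sL(\rL^1)}\le\|(CA^{-1}B)^n\|_{\sL(U)}$, the AL-analog of \Cref{fin}. For $0\le u\in\tuor$, Fubini yields
\begin{equation*}
\int_0^\infty(\F u)(t)\,dt=-CA^{-1}B\int_0^\infty u(s)\,ds,
\end{equation*}
so the AL-property applied to both $\F u\ge0$ and to $u$ gives $\|\F u\|_{\rL^1}\le\|CA^{-1}B\|_{\sL(U)}\cdot\|u\|_{\rL^1}$. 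An induction on $n$, using positivity of $\F^k u$ at each step to iterate the same Fubini identity, yields the bound on $\|\F^n\|$. The estimate extends to signed $u$ via $|\F^n u|\le\F^n|u|$, and $\F$ then extends by density to a positive bounded operator on all of $\rL^1([0,+\infty),U)$.

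Finally, by monotonicity of the spectral radius for positive operators (\Cref{spr}) and the resolvent identity, the hypothesis $\sr(CR(\lambda,A)B)<1$ yields $\sr(-CA^{-1}B)<1$ and thus $\sr(\F)<1$; consequently $Id-\F$ is invertible by its Neumann series and the Laplace-transform identity $\mathscr{L}((Id-\F)^{-1}u)(\lambda)=(Id-CR(\lambda,A)B)^{-1}\hat u(\lambda)$ is obtained exactly as in \Cref{ltam}. Setting $S(t):=T(t)+\sBt(Id-\F)^{-1}\mathcal{C}_\infty$ produces a positive, uniformly bounded strongly continuous family in $\sL(X)$ whose Laplace transform equals $R(\lambda,A+BC)$, and \cite[Thm.3.1.7]{ABHN:11} then identifies $(S(t))_{t\ge0}$ as a positive $C_0$-semigroup with generator $A_{BC}=A+BC$ on $D(A)$; the variation of parameters formula \eqref{VP2} is immediate. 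The main obstacle is the construction and norm control of $\F$ on $\rL^1$: unlike the AM case, $C$ is unbounded, so $C\sBt u$ is only literally defined on a dense subspace, and the sharp bound by $\|(CA^{-1}B)^n\|$ must be extracted from the AL-structure before extending $\F$ by density; once this is in place, everything else transcribes from the preceding subsection with only cosmetic changes.
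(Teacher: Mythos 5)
Your outline follows the paper's proof of \Cref{AL} essentially step for step: rescaling to $\omega_0(A)<0$, $1$-admissibility of $C$ via the AL-property, definition of $\F$ on step functions with the Fubini identity and the inductive bound $\|\F^n\|_{\sL(\rL^1)}\le\|(CA^{-1}B)^n\|_{\sL(U)}$, invertibility of $Id-\F$ from $\sr(\F)\le\sr(CA^{-1}B)<1$, and the final assembly $S(t)=T(t)+\sBt(Id-\F)^{-1}\mathcal{C}_\infty$ identified through its Laplace transform. All of that is correct and is exactly what the paper does.

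There is, however, one step that fails as written: in the $1$-admissibility of $C$, you pass from positive $x$ to general $x\in D(A)$ via ``the lattice inequality $|CT(s)x|\le CT(s)|x|$.'' The right-hand side is not defined: $C$ has domain $D(A)$, and for $x\in D(A)$ the elements $|x|$, $x^+$, $x^-$ need only lie in $X$, not in $D(A)$ (the domain of a generator is not a sublattice in general), so neither $CT(s)|x|$ nor $CT(s)x^{\pm}$ makes sense. This is precisely why the paper's \Cref{c1} does not argue this way; instead it regularizes, setting $x_\lambda:=\lambda R(\lambda,A)x$, $x_\lambda^P:=\lambda R(\lambda,A)x^+$, $x_\lambda^N:=\lambda R(\lambda,A)x^-\in D(A)_+$, applies the positive-case estimate to $x_\lambda^P+x_\lambda^N$ via $|CT(s)x_\lambda|\le CT(s)(x_\lambda^P+x_\lambda^N)$, and then passes to the limit $\lambda\to+\infty$ using convergence of $x_\lambda$ to $x$ in the graph norm of $A$ (needed to control $CT(s)x_\lambda\to CT(s)x$, since $C$ is only $A$-bounded). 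Your argument needs this regularization inserted; the rest of the proposal then goes through as you describe.
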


\begin{remark}\label{rem:DV-pos}
As already mentioned, this result generalizes the main result in \cite{Des:88} and \cite{Voi:89}, see also \cite[Sect.13.3]{BKR:17} or \cite[Sect.5.2.1]{BA:06}, where it is assumed that $X=U$ and $B=Id$. In particular, in these works  one needs $X$ to be an AL-space while in \Cref{AL} only the boundary space $U$ has to be of the type AL while $X$ is allowed to be an arbitrary Banach lattice. See \Cref{subsec:uP-FD} for an example where this generalization is crucial.
\end{remark}

The proof of \Cref{AL} is structured similarly as the one of \Cref{AM}. We note that by the rescaling argument from \cite[Sect.II.2.2]{EN:00}  we  again assume $\omega_0(A)<0$ and choose $\lambda=0$ throughout the proof.

\smallbreak
Under the assumptions of \Cref{AL} the control operator $B:U\to X$ is bounded and hence $1$-admissible for all $t>0$, cf.\ \Cref{badm}. Next we show that the observation operator $C:D(A)\to U$ is $1$-admissible as well.

\begin{lemma}\label{c1}
If $C:D(A)\to U$ is a positive linear operator and $U$ is an AL-space, then $C$ is 1-admissible for any $t>0$ and
$\mathcal{C}_t:X\to\lru$ defined by \eqref{cto} satisfies
\begin{equation}\label{ct2}
\lVert\mathcal{C}_t\rVert_{\sL(X,\rL^1)}\le\bigl\lVert CA^{-1}\bigr\rVert_{\sL(X,U)}\quad\text{for all $t>0$}.
\end{equation}
\end{lemma}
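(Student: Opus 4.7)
The plan is to mirror the structure of \Cref{binf} by exploiting the AL--AM duality. After the usual rescaling we may assume $\omega_0(A)<0$, so that $0\in\rho(A)$ and $-A^{-1}=\int_0^{+\infty}T(s)\,ds$ in the strong operator topology. The crucial property of the AL-space $U$ is that its norm is additive on $U_+$, which implies that $\int_0^t\lVert f(s)\rVert_U\,ds=\bigl\lVert\int_0^t f(s)\,ds\bigr\rVert_U$ for every positive $U$-valued integrable function $f$.

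First I would verify the estimate for $x\in D(A)_+$. The positivity of $T$ yields
$0\le\int_0^t T(s)x\,ds\le -A^{-1}x$ in $X$,
both sides lying in $D(A)$. Applying the positive operator $C$ gives $0\le\int_0^t CT(s)x\,ds\le -CA^{-1}x$ in $U_+$, and so by the AL-property of $U$ and monotonicity of the norm on the positive cone,
\[
\int_0^t\lVert CT(s)x\rVert_U\,ds=\Bigl\lVert\int_0^t CT(s)x\,ds\Bigr\rVert_U\le\lVert CA^{-1}x\rVert_U\le\bigl\lVert CA^{-1}\bigr\rVert_{\sL(X,U)}\cdot\lVert x\rVert_X.
\]

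The main obstacle is extending this bound to arbitrary $x\in D(A)$. In contrast to the proof of \Cref{binf}, where step functions form a sublattice of $\rL^\infty([0,t],U)$ so that $|u|\in\tu$, here $D(A)$ need not be a sublattice of $X$; in general $|x|\notin D(A)$ and the direct lattice estimate $|CT(s)x|\le CT(s)|x|$ is unavailable. The remedy is a resolvent regularization: for $\lambda>0$ set $\phi_\lambda:=\lambda R(\lambda,A)\in\sL(X)$, which is positive and converges strongly to the identity as $\lambda\to+\infty$. Then $\phi_\lambda|x|\in D(A)_+$, and the classical positive-operator inequality $|\phi_\lambda x|\le\phi_\lambda|x|$ combined with positivity of $T$ gives $\pm T(s)\phi_\lambda x\le T(s)\phi_\lambda|x|$ in $X$, with both sides now lying in $D(A)$. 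Applying the positive operator $C$ yields $|CT(s)\phi_\lambda x|\le CT(s)\phi_\lambda|x|$ in $U$; taking $U$-norms, integrating, and using the case $x\in D(A)_+$ on $\phi_\lambda|x|$ produces
\[
\int_0^t\lVert CT(s)\phi_\lambda x\rVert_U\,ds\le\int_0^t\bigl\lVert CT(s)\phi_\lambda|x|\bigr\rVert_U\,ds\le\bigl\lVert CA^{-1}\bigr\rVert_{\sL(X,U)}\cdot\bigl\lVert\phi_\lambda|x|\bigr\rVert_X.
\]

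To conclude, pass to $\lambda\to+\infty$. Since $\lVert Cy\rVert_U=\lVert CA^{-1}(Ay)\rVert_U\le\lVert CA^{-1}\rVert_{\sL(X,U)}\cdot\lVert Ay\rVert_X$ for every $y\in D(A)$, the operator $C$ is automatically bounded from $(D(A),\lVert\cdot\rVert_A)$ to $U$. For $x\in D(A)$ one has $T(s)\phi_\lambda x=\phi_\lambda T(s)x\to T(s)x$ in the graph norm, hence $CT(s)\phi_\lambda x\to CT(s)x$ in $U$ pointwise in $s\in[0,t]$. Fatou's lemma together with $\bigl\lVert\phi_\lambda|x|\bigr\rVert_X\to\lVert x\rVert_X$ then gives
\[
\int_0^t\lVert CT(s)x\rVert_U\,ds\le\liminf_{\lambda\to+\infty}\int_0^t\lVert CT(s)\phi_\lambda x\rVert_U\,ds\le\bigl\lVert CA^{-1}\bigr\rVert_{\sL(X,U)}\cdot\lVert x\rVert_X,
\]
which proves the $1$-admissibility of $C$ together with the stated bound on $\mathcal{C}_t$.
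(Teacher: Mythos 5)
Your proposal is correct and follows essentially the same route as the paper's proof: the positive case via the AL-property and $\int_0^{\infty}T(s)x\,ds=-A^{-1}x$, then the resolvent regularization $\lambda R(\lambda,A)$ to replace the unavailable $|x|\in D(A)$ (your $\phi_\lambda|x|$ is exactly the paper's $x_\lambda^P+x_\lambda^N$), and a passage to the limit. The only cosmetic difference is that you conclude via Fatou's lemma from pointwise convergence, whereas the paper establishes uniform convergence of $CT(s)x_\lambda$ using the graph-norm bound on $C$.
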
 

\begin{proof} We follow in part the proof of \cite[Prop.13.7]{BKR:17}.
First take $0\le x\in D(A)$. Since $U$ is an AL-space and $ CT(s)x\ge0$ for all $s\ge0$ we conclude
\begin{equation}\label{cpos}
\begin{split}
\int_{0}^{t}\bigl\lVert CT(s)x\bigr\rVert_U\,ds
&=\left\lVert \int_{0}^{t}C T(s)x\,ds\right\rVert_U
\le\left\lVert \int_{0}^{+\infty}CT(s)x\,ds\right\rVert_U\\
&=\bigl\lVert-CA^{-1}x\bigr\rVert_U
\le\bigl\lVert CA^{-1}\bigr\rVert_{\sL(X,U)}\cdot\lVert x\rVert_X.
\end{split}
\end{equation}
If $x\in D(A)$ we decompose it as $x=x^+-x^-$ and define for $\lambda>0$
\begin{align*}
x_\lambda&:=\lambda R(\lambda,A)x\in D(A),\\
x_\lambda^P&:=\lambda R(\lambda,A)x^+\in D(A)_+,\\
x_\lambda^N&:=\lambda R(\lambda,A)x^-\in D(A)_+.
\end{align*}
Then $x_\lambda\to x$, $x_\lambda^P\to x^+$ and $x_\lambda^N\to x^-$ in $X$ as $\lambda\to+\infty$. Moreover,
if $\lVert\cdot\rVert_{X_1}$ denotes the norm of $X_1=D(A)$ given by $\|x\|_{X_1}=\|Ax\|_X$, then also
\begin{equation*}
\begin{split}
\lVert x_\lambda-x\rVert_{X_1}=\bigl\lVert \lambda R(\lambda,A)x-x\bigr\rVert_{X_1}=\bigl\lVert \lambda R(\lambda,A)Ax-Ax\bigr\rVert_X\to0,
\end{split}
\end{equation*}
i.e., $x_\lambda\to x$ in $X_1$ as $\lambda\to+\infty$.
Since for all $s\in[0,t]$ we have
\[\bigl\lvert CT(s)x_\lambda\bigr\rvert\le CT(s)\big(x_\lambda^P+x_\lambda^N\big)\]
using \eqref{cpos} we obtain
\begin{equation} \label{cla}
\begin{split}
\int_{0}^{t}\bigl\lVert CT(s)x_\lambda\bigr\rVert_U\,ds
&\le\int_{0}^{t}\left\lVert CT(s)\big(x_\lambda^P+x_\lambda^N\big)\right\rVert_U\,ds\\
&\le\bigl\lVert CA^{-1}\bigr\rVert_{\sL(X,U)}\cdot\bigl\lVert x_\lambda^P+ x_\lambda^N\bigr\rVert_X.
\end{split}
\end{equation}
Now choose $M\ge1$ such that $\|T(s)\|\le M$ for all $s\ge0$. Then
\begin{align*}
\bigl\|CT(s)x_\lambda-CT(s)x\bigr\|_U&=\bigl\|CA^{-1}T(s)(Ax_\lambda-Ax)\bigr\|_U\\
&\le M\cdot\bigl\|CA^{-1}\bigr\|_{\sL(X,U)}\cdot\|x_\lambda-x\|_{X_1}.
\end{align*}
Hence, $CT(s)x_\lambda\to CT(s)x$ uniformly for $s\in[0,t]$ as $\lambda\to+\infty$ and \eqref{cla} implies
\begin{equation*}
\int_{0}^{t}\bigl\lVert CT(s)x\bigr\rVert_U\,ds
\le\bigl\lVert CA^{-1}\bigr\rVert_{\sL(X,U)}\cdot\lVert x^++ x^-\rVert_X
=\bigl\lVert CA^{-1}\bigr\rVert_{\sL(X,U)}\cdot\lVert x\rVert_X.
\end{equation*}
This proves that $C$ is 1-admissible for all $t>0$ and that the operator $\mathcal{C}_t$ defined in \eqref{cto} satisfies the estimate \eqref{ct2}.
\end{proof}

As in \Cref{pDS}, we need to extend the operators defined in \eqref{btpo} and \eqref{cto} on $\mathbb{R}^+$. 
As always, we assume that $\omega_0(A)<0$.

\begin{lemma}\label{too2}
Let $B:U\to X$ and $C:D(A)\to U$ be positive operators. Then,
\begin{enumerate}[label=(\roman*)]
\item $B$ is 1-admissible for every $t>0$, i.e., $\mathcal{B}_t\in\sL(\lru,X)$ for every $t>0$ where
\begin{equation}\label{b}
\mathcal{B}_t u:=\int_{0}^{t}T(t-s)Bu(s)\,ds.\quad u\in\lru.
\end{equation}
Moreover, the family $(\mathcal{B}_t)_{t\ge0}$ is strongly continuous and uniformly bounded.
\item $C$ is 1-admissible for all $t\ge0$, i.e., there exists a bounded operator $\mathcal{C}_\infty:X\to\lru$ such that
\begin{equation*} 
\big(\mathcal{C}_\infty x\big)(s)=CT(s)x\quad\text{for}\,\,x\in D(A),\,\,s\ge0.
\end{equation*}
\end{enumerate}
\begin{proof}
(i).  The $1$-admissibility of $B$ for every $t>0$ and strong continuity of $(\mathcal{B}_t)_{t\ge0}$ follow easily from the boundedness of $B:U\to X$. Uniform boundedness of $(\mathcal{B}_t)_{t\ge0}$ follows from the standing assumption $\omega_0(A)<0$.
(ii) is clear by \eqref{ct2}. 
\end{proof}
\end{lemma}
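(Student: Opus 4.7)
The plan is to prove (i) directly from boundedness of $B$ and uniform boundedness of the semigroup (which holds since we assume $\omega_0(A)<0$), and to prove (ii) by passing to the limit $t\to+\infty$ in the estimate from \Cref{c1} via monotone convergence, then extending by density.

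For (i), I would fix $M\ge1$ with $\|T(s)\|_{\sL(X)}\le M$ for all $s\ge0$ and estimate, for $u\in\lru$,
\begin{equation*}
\|\mathcal{B}_t u\|_X
\le\int_0^t\|T(t-s)\|_{\sL(X)}\,\|B\|_{\sL(U,X)}\,\|u(s)\|_U\,ds
\le M\,\|B\|_{\sL(U,X)}\,\|u\|_{\lru}.
\end{equation*}
This gives $\mathcal{B}_t\in\sL(\lru,X)$ with $\sup_{t\ge0}\|\mathcal{B}_t\|_{\sL(\lru,X)}\le M\|B\|_{\sL(U,X)}$. For strong continuity, I would first argue for $u\in\rC_c([0,+\infty),U)$ (compactly supported continuous): writing, for $0\le r<t$,
\begin{equation*}
\mathcal{B}_tu-\mathcal{B}_ru
=\int_0^r\bigl(T(t-s)-T(r-s)\bigr)Bu(s)\,ds+\int_r^t T(t-s)Bu(s)\,ds,
\end{equation*}
the second integral is bounded by $M\|B\|_{\sL(U,X)}\,\int_r^t\|u(s)\|_U\,ds\to0$, while for the first, strong continuity of $(T(s))_{s\ge0}$ together with the uniform bound $M\|B\|\|u\|_\infty$ on the integrand yields convergence to $0$ by dominated convergence. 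Density of $\rC_c$ in $\lru$ combined with the uniform operator bound obtained above then gives strong continuity of $t\mapsto\mathcal{B}_tu$ for every $u\in\lru$.

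For (ii), \Cref{c1} supplies, for every $t>0$ and every $x\in D(A)$, the estimate
\begin{equation*}
\int_0^t\|CT(s)x\|_U\,ds\le\bigl\|CA^{-1}\bigr\|_{\sL(X,U)}\,\|x\|_X,
\end{equation*}
with a bound independent of $t$. Since the left-hand side is monotone non-decreasing in $t$, monotone convergence gives $s\mapsto CT(s)x\in\lru$ with
$\|CT(\cdot)x\|_{\lru}\le\|CA^{-1}\|_{\sL(X,U)}\,\|x\|_X$
for all $x\in D(A)$. The assignment $x\mapsto CT(\cdot)x$ is therefore a bounded linear operator from $D(A)$ (in the $X$-norm) into $\lru$; by density of $D(A)$ in $X$ it extends uniquely to an operator $\mathcal{C}_\infty\in\sL(X,\lru)$ with $(\mathcal{C}_\infty x)(s)=CT(s)x$ for $x\in D(A)$ and $s\ge0$.

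No step looks like a genuine obstacle; the only mild care point is the strong continuity in (i), which is routine via the $\rC_c$–density argument and dominated convergence once the uniform bound from the boundedness of $B$ and $\omega_0(A)<0$ is in place.
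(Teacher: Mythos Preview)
Your proposal is correct and follows exactly the approach sketched in the paper: for (i) you use boundedness of $B$ together with the uniform semigroup bound coming from $\omega_0(A)<0$, and for (ii) you invoke the estimate \eqref{ct2} from \Cref{c1} and pass to the limit $t\to+\infty$. The paper's proof merely states these facts without spelling out the dominated/monotone convergence and density arguments, so your write-up is a faithful expansion of what the authors had in mind.
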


Next we  use again the space of all compactly supported $U$-valued step functions in $[0,+\infty)$, denoted as before by $\tuor$ which forms a dense sublattice of $\lru$.

\begin{lemma}\label{fin2}
Let $B:U\to X$ and $C:D(A)\to U$ be positive operators. Then
\begin{equation}\label{ftoal}
\left(\F u\right)(t):=C\int_{0}^{t}T(t-s)Bu(s)\,ds,\quad t\ge0,
\end{equation}
is well-defined for all $u\in\tuor$. Moreover, it possesses a (unique) positive bounded extension (still denoted by $\F$) $\F\in\sL(\lru)$ satisfying the estimates
\begin{equation}\label{foon2}
\bigl\lVert\F^n\bigr\rVert_{\sL(\rL^1)}\le\bigl\lVert (CA^{-1}B)^n\bigr\rVert_{\sL(U)}
\quad\text{for every }n\in\NN.
\end{equation}
\end{lemma}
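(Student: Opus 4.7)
The plan is to parallel the proof of \Cref{fin} but replace its pointwise supremum-based estimates (which exploited the AM-property) by norm-additive estimates adapted to AL-spaces. The central new tool is the integration map $\mathcal{I}:\lru\to U$, $\mathcal{I}u:=\int_0^{+\infty}u(s)\,ds$, which by the AL-property of $U$ is an isometry on the positive cone of $\lru$ and a contraction on all of $\lru$. It plays the role that the supremum $\ub=\sup_n u_n$ played in \Cref{fin}.

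First I verify well-definedness of $(\F u)(t)=C\sBt u$ for $u\in\tuor$. For a step function $u=\sum_{n=1}^N u_n\one_{I_n}$ the identity $\int_c^d T(r)x\,dr=A^{-1}(T(d)-T(c))x$ shows $\sBt u\in D(A)$ and yields an explicit piecewise expression for $\F u$ in terms of $CA^{-1}$ applied to $X$-valued continuous functions of $t$. Since $-CA^{-1}:X\to U$ is a positive, everywhere-defined operator between Banach lattices, it is bounded; hence $t\mapsto\F u(t)$ is piecewise continuous, bounded, and decays exponentially as $t\to+\infty$ (because $\omega_0(A)<0$), so $\F u\in\lru$.

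The heart of the argument is the identity
\begin{equation*}
\mathcal{I}(\F u)=-CA^{-1}B\cdot\mathcal{I}(u)\qquad\text{for every positive }u\in\tuor.
\end{equation*}
I obtain it by applying Fubini's theorem to the positive $X$-valued integrand $T(t-s)Bu(s)$, valid since the iterated integral is finite (using $\omega_0(A)<0$ and the compact support of $u$), which gives
\begin{equation*}
\int_0^{+\infty}\sBt u\,dt=\int_0^{+\infty}\int_s^{+\infty}T(t-s)Bu(s)\,dt\,ds=-A^{-1}B\,\mathcal{I}(u)\in D(A),
\end{equation*}
and then applying the bounded operator $CA^{-1}:X\to U$. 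Combined with the AL-isometry, this yields for positive $u\in\tuor$
\begin{equation*}
\|\F u\|_{\lru}=\|\mathcal{I}(\F u)\|_U\le\|CA^{-1}B\|_{\sL(U)}\|u\|_{\lru},
\end{equation*}
and for general $u\in\tuor$ positivity of $\F$ gives $|\F u|\le\F|u|$ and thus the same bound. Density of $\tuor$ in $\lru$ then delivers the unique positive extension $\F\in\sL(\lru)$.

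For the iterated estimate \eqref{foon2} I extend the identity $\mathcal{I}\F u=-CA^{-1}B\,\mathcal{I}u$ by density to all positive $u\in\lru$ (both sides being bounded $\lru\to U$). Since $\F$ preserves positivity, induction gives $\mathcal{I}(\F^n u)=(-CA^{-1}B)^n\mathcal{I}(u)$ for positive $u$, whence $\|\F^n u\|_{\lru}\le\|(CA^{-1}B)^n\|_{\sL(U)}\|u\|_{\lru}$; the general case follows from $|\F^n u|\le\F^n|u|$ exactly as in \Cref{fin}. The principal technical point is the Fubini computation together with the justification that the resulting $X$-valued integral lies in $D(A)$ so that $C$ may be applied; this is handled, as indicated, by performing Fubini on the $X$-valued integrand first and applying $C$ (equivalently, the bounded operator $CA^{-1}$) only to the $D(A)$-valued result.
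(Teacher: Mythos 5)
Your proposal is correct and follows essentially the same route as the paper's proof: well-definedness of $\F u$ on step functions via the explicit $D(A)$-valued formula for $\int_{t_{n-1}}^{t_n}T(t-s)Bu_n\,ds$, the Fubini computation producing $-CA^{-1}B\int_0^{+\infty}u(s)\,ds$, the AL-property to convert this into an $\rL^1$-norm bound, the reduction $|\F^n u|\le\F^n|u|$ for general $u$, and density of $\tuor$ in $\lru$. Packaging the argument through the integration map $\mathcal{I}$ and the exact identity $\mathcal{I}\circ\F^n=(-CA^{-1}B)^n\circ\mathcal{I}$ on positive elements is only a cosmetic sharpening of the paper's inductive inequality \eqref{cabn2}.
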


\begin{proof}
We verify that for all $u\in\tuor$ we have $\F u\in\lru$ and that the following two conditions hold:
\begin{equation*} 
\begin{aligned}
	&(i)\quad \int_{0}^{t}T(t-s)Bu(s)\,ds\in D(A)\text{ for any $t>0$,}\\
	&(ii)\quad \lVert\F u\rVert_1\le \bigl\lVert CA^{-1}B\bigr\rVert_{\sL(U)}\cdot\lVert u\rVert_1.
\end{aligned}
\end{equation*}
Condition (i) follows by the same reasoning as in the proof of \Cref{binf}, bearing in mind that for every $n=1,\ldots,N$ the term $Bu_n$ in \eqref{eq:Bt-sf-cont} now belongs to $X$, hence the right-hand-side belongs to $D(A)$. Moreover, from \eqref{ftoal} it immediately follows that $\F$ is positive.

\smallbreak
To prove condition (ii), fix $u\in\tuor$. Then $|u|\in\tuor$ as well and since $\F$ is positive we conclude
\[\bigl\lvert(\F u)(t)\bigr\rvert
\le(\F u^+)(t)+(\F u^-)(t)
=\bigl(\F|u|\bigr)(t)\quad\text{for all}\,\,t\ge0.\]
Since $U$ is an AL-space and using Fubini's Theorem and (i), we obtain
\begin{equation}\label{eq:Fub-AL}
\begin{split}
\lVert\mathcal{F}_\infty u\rVert_{1}
&\le\int_{0}^{+\infty}\bigl\lVert\bigl(\mathcal{F}_\infty |u|\bigr)(t)\bigr\rVert_U\,dt\\
 &=\int_{0}^{+\infty}\left\lVert C\int_{0}^{t}T(t-s)B|u(s)|\,ds\right\rVert_Udt\\
 &=\left\lVert\int_{0}^{+\infty}CA^{-1}\int_{0}^{t}A_{-1}T(t-s)B|u(s)|\,ds\,dt\right\rVert_U\\
 &=\left\lVert CA^{-1}\int_{0}^{+\infty}\left(\int_{s}^{+\infty}A_{-1}T(t-s)B|u(s)|\,dt\right)\,ds\right\rVert_U\\
 &=\left\lVert CA^{-1}\int_{0}^{+\infty}\left(\int_{0}^{+\infty}A_{-1}T(r)B|u(s)|\,dr\right)\,ds\right\rVert_U\\
 &=\left\lVert-C A^{-1}B\int_{0}^{+\infty}|u(s)|\,ds\right\rVert_U\le\bigl\lVert C A^{-1}B\bigr\rVert_{\sL(U)}\cdot\lVert u\rVert_{1}.
\end{split}
\end{equation}
Therefore,  $(\F u)(t)$ given by \eqref{ftoal} defines a positive linear operator 
\[\mathcal{F}_\infty:\tuor\subset\lru\to\lru\] 
satisfying condition (ii) for all $u\in\tuor$. Hence, $\F$ possesses a unique bounded positive extension $\F\in\sL(\lru)$ satisfying the norm estimate \eqref{foon2} for $n=1$.

\smallbreak
To prove \eqref{foon2} for arbitrary $n\in\NN$ we first verify by induction that for any $u\in\lru$ and $t\ge0$ we have
\begin{equation}\label{cabn2}
\int_{0}^{+\infty}\bigl\lvert(\F^nu)(t)\bigr\rvert\,dt\le\bigl(-CA^{-1}B\bigr)^n\int_{0}^{+\infty}\bigl|u(t)\bigr|\,dt\quad\text{for all}\,\,n\in\mathbb{N}.
\end{equation}
Using Fubini's Theorem like in  \eqref{eq:Fub-AL} and the positivity of $\F$ the case $n=1$ follows, since
\begin{equation*}
\begin{split}
\int_{0}^{+\infty}\bigl\lvert(\F u)(t)\bigr\rvert\,dt\le\int_{0}^{+\infty}\bigl(\F|u|\bigr)(t)\,dt=-CA^{-1}B\int_{0}^{+\infty}|u(t)|\,dt.
\end{split}
\end{equation*}
Now assume \eqref{cabn2} holds for some $n\ge 1$. Then, again by the positivity of $\F^n$ we obtain
\begin{equation}\label{eq:Fin-L1}
\begin{split}
\int_{0}^{+\infty}\bigl\lvert(\F^{n+1}u)(t)\bigr\rvert\,dt
&=\int_{0}^{+\infty}\bigl\lvert\bigl(\F^{n}(\F u)\bigr)(t)\bigr\rvert\,dt\\
&\le\int_{0}^{+\infty}\bigl(\F^{n}\left\lvert\F u\right\rvert\bigr)(t)\,dt\\
&\le\bigl(-CA^{-1}B\bigr)^n\int_{0}^{+\infty}\bigl|(\F u)(t)\bigr|\,dt\\
&\le\bigl(-CA^{-1}B\bigr)^{n+1}\int_{0}^{+\infty}\bigl|u(t)\bigr|\,dt,
\end{split}
\end{equation}
which proves \eqref{cabn2}. 
Keeping in mind that $U$ is an AL-space, \eqref{cabn2}  implies for arbitrary $u\in\lru$, $t\ge0$ and $n\in\mathbb{N}$
\begin{equation}\label{eq:Fin-L1-n}
\begin{split}
\bigl\|\F^nu\bigr\|_1&=
\int_{0}^{+\infty}\bigl\lVert(\F^n u)(t)\bigr\rVert_U\,dt\\
&=\left\lVert\int_{0}^{+\infty}\bigl\lvert(\F^n u)(t)\bigr\rvert\,dt\right\rVert_U\\
&\le\left\lVert\bigl(-CA^{-1}B\bigr)^n\int_{0}^{+\infty}\bigl|u(t)\bigr|\,dt\right\rVert_U
\le\Bigl\lVert\bigl(CA^{-1}B\bigr)^n\Bigr\rVert_{\sL(U)}\cdot\lVert u\rVert_1.
\end{split}
\end{equation}
This completes the proof of \eqref{foon2}.
\end{proof}

At this point we need a result similar to \Cref{ltam} concerning the invertibility of $Id-\mathcal{F}_\infty$ and the Laplace transform of its inverse. Again, for this the spectral assumption $\sr(CR(\lambda,A)B)<1$ plays a crucial role. Recall that, as always, we assume $\omega_0(A)<0$. 

\begin{lemma}\label{ltal}
Let $B:U\to X$ and $C:D(A)\to U$ be positive
and assume that $\sr(CA^{-1}B)<1$. Then for any $\lambda>0$ one has $1\in\rho(\F)\cap\rho(CR(\lambda,A)B)$ and
\begin{equation}\label{lapl}
\mathscr{L}\big((Id-\mathcal{F}_\infty)^{-1}u\big)(\lambda)=\big(Id-CR(\lambda,A)B\big)^{-1}\cdot\hat{u}(\lambda),
\end{equation}
for all $u\in\lru$.
\end{lemma}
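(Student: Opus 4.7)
The plan is to mirror the structure of the proof of \Cref{ltam}, adapting each step to the AL-setting using the admissibility and norm estimates already established in \Cref{c1}, \Cref{too2}, and \Cref{fin2}.

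\smallbreak
First I would handle the resolvent side. Since $\omega_0(A)<0$, the resolvent identity gives
\[
CR(\lambda,A)B=CR(0,A)B-\lambda CR(\lambda,A)R(0,A)B=-CA^{-1}B-\lambda CR(\lambda,A)\bigl(-A^{-1}B\bigr)
\]
for all $\lambda>0$. By positivity of $(T(t))_{t\ge0}$ we have $-A^{-1}B=\int_0^{+\infty}T(s)B\,ds\ge0$, and positivity of $R(\lambda,A)$ then yields $0\le CR(\lambda,A)B\le -CA^{-1}B$. Applying the comparison result for spectral radii (\Cref{spr} in the appendix, as used in \eqref{spes}) gives $\sr(CR(\lambda,A)B)\le\sr(CA^{-1}B)<1$, hence $1\in\rho(CR(\lambda,A)B)$ for every $\lambda>0$.

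\smallbreak
For the operator $\F$, I would plug the norm estimate \eqref{foon2} directly into Gelfand's formula to obtain
\[
\sr(\F)=\lim_{n\to+\infty}\bigl\|\F^n\bigr\|^{1/n}_{\sL(\rL^1)}\le\lim_{n\to+\infty}\bigl\|(CA^{-1}B)^n\bigr\|^{1/n}_{\sL(U)}=\sr(CA^{-1}B)<1,
\]
so that $1\in\rho(\F)$ and $(Id-\F)^{-1}=\sum_{n=0}^{\infty}\F^n$ converges absolutely in $\sL(\lru)$.

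\smallbreak
The main work is the identity \eqref{lapl}. I would first prove by induction on $n$ that for every $u\in\tuor$ and $\lambda>0$
\[
\mathscr{L}(\F^n u)(\lambda)=\bigl(CR(\lambda,A)B\bigr)^n\cdot\hat u(\lambda).
\]
For $n=1$, on $\tuor$ the formula $(\F u)(t)=C\int_0^tT(t-s)Bu(s)\,ds$ is literal and the convolution $T(\cdot)B\star u$ takes values in $D(A)$ (as established in the proof of \Cref{fin2}); then applying \cite[Lemma 3.12]{BE:14} to the $X$-valued convolution and moving the bounded operator $C:X_1\to U$ (which is bounded since $\|Cx\|_U\le\|CA^{-1}\|_{\sL(X,U)}\|Ax\|_X$) past the Bochner integral yields $\mathscr{L}(\F u)(\lambda)=CR(\lambda,A)B\cdot\hat u(\lambda)$. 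The inductive step is identical to \Cref{ltam} using the semigroup/resolvent structure. The expected obstacle is the unboundedness of $C$ on $X$, which I circumvent by working on $\tuor$ where the integrand lies in $D(A)$ and $C$ acts via its bounded realization on $X_1$.

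\smallbreak
Finally, to pass from $\tuor$ to all of $\lru$ I would use density of $\tuor$ in $\lru$, boundedness of $\F^n\in\sL(\lru)$, and continuity of the Laplace transform on $\lru$. The partial sums
\[
f_n:=\sum_{k=0}^{n}\F^ku
\]
converge in $\lru$ to $(Id-\F)^{-1}u$ by Step 2, so by \cite[Thm.1.7.5]{ABHN:11} their Laplace transforms converge pointwise (for fixed $\lambda>0$) and
\[
\mathscr{L}\bigl((Id-\F)^{-1}u\bigr)(\lambda)=\lim_{n\to+\infty}\sum_{k=0}^{n}\bigl(CR(\lambda,A)B\bigr)^k\hat u(\lambda)=\bigl(Id-CR(\lambda,A)B\bigr)^{-1}\hat u(\lambda),
\]
where the last equality uses the Neumann series inside $\rho(CR(\lambda,A)B)$ from Step 1. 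This completes the argument.
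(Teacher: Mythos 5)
Your proposal is correct, and for the two spectral claims it coincides with the paper's argument: the bound $0\le CR(\lambda,A)B\le -CA^{-1}B$ via the resolvent identity plus \Cref{spr} (exactly the mechanism of \eqref{spes}), and $\sr(\F)<1$ from \eqref{foon2} and Gelfand's formula. The one genuine divergence is the identity \eqref{lapl}: the paper simply invokes \cite[Lem.13]{ABE:14}, whereas you reprove it by transplanting the induction from \Cref{ltam} to the AL-setting. Your route buys self-containedness at the cost of having to handle the unboundedness of $C$ on $X$, which you do correctly by noting that the convolution $T(\cdot)B\star u$ is $X_1$-valued for $u\in\tuor$ and that $C=CA^{-1}\cdot A\in\sL(X_1,U)$; since $e^{-\lambda t}\|T(\cdot)B\star u\|_{X_1}$ is integrable (the support of $u$ is compact and $\omega_0(A)<0$), pulling $C$ through the Bochner integral is legitimate. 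One organizational point you should make explicit: the inductive step applies $\mathscr{L}(\F v)(\lambda)=CR(\lambda,A)B\hat v(\lambda)$ to $v=\F^n u$, which in general lies in $\lru$ but not in $\tuor$, so you must first extend the $n=1$ identity from $\tuor$ to all of $\lru$ by density (both sides are continuous in $u\in\lru$ for fixed $\lambda>0$, since $\|\hat w(\lambda)\|_U\le\|w\|_1$ and $\F\in\sL(\lru)$) and only then run the induction. Your final passage to $(Id-\F)^{-1}u$ via the partial sums and the Neumann series is fine and matches the structure of the paper's proof of \Cref{ltam}.
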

\begin{proof}
The claim $1\in\rho(CR(\lambda,A)B)$ follows like in the proof of \Cref{ltam} from \eqref{spes}.  Moreover, \eqref{foon2} implies $\sr(\F)\le \sr(CA^{-1}B)<1$ as in \eqref{eq:est-rF} and hence $1\in\rho(\F)$.
The identity \eqref{lapl} then follows by \cite[Lem.13]{ABE:14}.
\end{proof}

We have now all the necessary tools to prove the main result of this section.
\begin{proof}[Proof of \Cref{AL}]
We define an operator family $(S(t))_{t\ge0}\subset\sL(X)$ and verify that it is a  $C_0$-semigroups with generator $A_{BC}$.

\smallbreak
Since by \Cref{ltal}, $Id-\mathcal{F}_\infty$ is invertible we can define the operators
\begin{equation}\label{semig2}
S(t):=T(t)+\mathcal{B}_t(Id-\mathcal{F}_\infty)^{-1}\mathcal{C}_\infty\in\sL(X)
\quad\text{for $t\ge0$},
\end{equation}
where $\mathcal{B}_t$ is given by \eqref{b}. Then \Cref{too2} implies that $(S(t))_{t\ge0}$ is strongly continuous on $X$ and uniformly bounded. 
The same reasoning as in \cite[Proof of Thm.10]{ABE:14} gives
\begin{equation*}
\mathscr{L}\bigl(S(\cdot)x\bigr)(\lambda)=R(\lambda,A_{BC})x\quad\text{for all $x\in X$ and $\lambda>0$}.
\end{equation*}
By \cite[Thm.3.1.7]{ABHN:11}, this implies that \eqref{semig2} defines a positive  $C_0$-semigroup on $X$ with generator $A_{BC}$ given by \eqref{abcal}. Finally, the variation of parameters formula \eqref{VP2} follows as in \cite[Thm.10]{ABE:14}. 
\end{proof}

\subsection{Positive Structured Perturbations factorized via $\mathbf{\R}$}\label{gen}

In this subsection we assume the space $U$ to be both an AM- and an AL-space. Since no infinite dimensional space can satisfy these properties at the same time, we choose in the sequel $U=\R$ for some $N\ge1$. Then $U$ becomes an AL-space when equipped with the 1-norm $\lvert\cdot\rvert_1$, whereas the $\infty$-norm $\lvert\cdot\rvert_\infty$ makes it an AM-space. In some of the following the proofs we need to make use of the AM- and/or AL-property,  hence use the notations $\V:=(\R,\lvert\cdot\rvert_1)$ and $\W:=(\R,\lvert\cdot\rvert_\infty)$ when the norm needs to be specified.

\smallbreak
The following is the main result of this section. In contrast to \Cref{AM,AL} it allows both operators $B:U\to\Xmo$ and $C:Z\to U$ to be unbounded\footnote{Here by ``both unbounded'' we intend that $\rg(B)\not\subset X$ and $D(C)=Z\subsetneqq X$, respectively.}%
.  Compared to the Weiss--Staffans perturbation theorem \cite[Thm.10]{ABE:14} it replaces the in general difficult to verify boundedness and spectral conditions on $\F$ by the much simpler condition $\sr(CR(\lambda,A_{-1})B)<1$. Note that in case $U=\R$ under the usual positivity assumptions the operator $CR(\lambda,A_{-1})B$ gets a positive scalar $N\times N$-matrix.

\begin{theorem}\label{PT}
Let $A:D(A)\subseteq X\to X$ generate a positive  $C_0$-semigroup $(T(t))_{t\geq0}$ on the Banach lattice $X$. Assume that $B:\R\to\Xmo$ and $C:Z\to\R$ are positive linear operators where $D(A)\subseteq Z\subseteq X$. Moreover, assume that the triple $(A,B,C)$ is compatible and the spectral radius $\sr(C R(\lambda,A_{-1})B)<1$ for some $\lambda>\omega_0(A)$.
If there exists $1\le p<\infty$ such that $B$ and $C$ are $p$-admissible, then the operator
\begin{equation}\label{abc}
	\begin{aligned}
		\hspace{3cm} A_{BC}&:=(A_{-1}+BC)|_{X},\\
		D(A_{BC})&:=\bigl\{x\in Z:(A_{-1}+BC)x\in X\bigr\}
	\end{aligned}
\end{equation}
generates a positive  $C_0$-semigroup $(S(t))_{t\geq0}$ on $X$ satisfying the \emph{variation of parameters formula}
\begin{equation}\label{VP}
S(t)x:=T(t)x+\int_{0}^{t}T_{-1}(t-s)\cdot BC\cdot S(s)x\,ds\quad\text{for}\,\,x\in D(A_{BC}),\,\,t\ge0.
\end{equation}		
\end{theorem}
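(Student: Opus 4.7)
My plan is to combine the approaches of \Cref{AM,AL}, exploiting that $U=\R$ is simultaneously an AM-space (with $|\cdot|_\infty$, denoted $\W$) and an AL-space (with $|\cdot|_1$, denoted $\V$). First, by the rescaling argument of \cite[Sect.II.2.2]{EN:00}, I reduce to the case $\omega_0(A)<0$ and fix $\lambda=0$, so that the spectral hypothesis becomes $\sr(CA_{-1}^{-1}B)<1$. Since $B:\W\to\Xmo$ is positive, \Cref{binf,too} apply and yield a positive, strongly continuous, uniformly bounded family $(\sBt)_{t\ge0}\subset\sL(\cwor,X)$; in particular $B$ is $\infty$-admissible. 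Dually, since $C|_{D(A)}:D(A)\to\V$ is positive, \Cref{c1,too2}(ii) provide a positive bounded observability map $\mathcal{C}_\infty:X\to\lrv$; in particular $C$ is $1$-admissible. Thus $B$ and $C$ are automatically $p$-admissible for every $p\in[1,+\infty]$, and the compatibility of $(A,B,C)$ ensures that $\sBt u\in Z$ for every step function $u\in\tuor$, so $(\F u)(t):=C\sBt u$ is well-defined on this common dense subspace.

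The core step is the spectral radius estimate on $\lprr$. Running the arguments of \Cref{fin,fin2} once with $U=\W$ and once with $U=\V$ yields on $\tuor$ the two uniform bounds
\[
\|\F^n u\|_{\infty}\le\bigl\|(CA_{-1}^{-1}B)^n\bigr\|_{\sL(\W)}\cdot\|u\|_\infty,\qquad
\|\F^n u\|_1\le\bigl\|(CA_{-1}^{-1}B)^n\bigr\|_{\sL(\V)}\cdot\|u\|_1
\]
for every $n\in\NN$. Applying the Riesz--Thorin interpolation theorem to the single linear operator $\F^n$ on $\tuor$ and using that the spectral radius of a finite matrix is independent of the chosen norm on $\R$, I obtain
\[
\|\F^n u\|_{\rL^p}\le K\bigl\|(CA_{-1}^{-1}B)^n\bigr\|_{\sL(\R)}\cdot\|u\|_{\rL^p}
\]
for a constant $K=K(p,N)$ and every $u\in\tuor$. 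Density of $\tuor$ in $\lprr$ extends this to a bounded operator $\F\in\sL(\lprr)$, and Gelfand's formula together with $\sr(CA_{-1}^{-1}B)<1$ gives $\sr(\F)<1$ on $\lprr$. Hence $Id-\F$ is invertible via its Neumann series, and the Laplace-transform identity
\[
\mathscr{L}\bigl((Id-\F)^{-1}u\bigr)(\lambda)=\bigl(Id-CR(\lambda,A_{-1})B\bigr)^{-1}\hat{u}(\lambda)
\]
follows verbatim as in \Cref{ltam,ltal}.

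With these ingredients in place I define
\[
S(t):=T(t)+\sBt(Id-\F)^{-1}\mathcal{C}_\infty
\]
and verify, following \cite[Proof of Thm.10]{ABE:14}, that $\mathscr{L}(S(\cdot)x)(\lambda)=R(\lambda,A_{BC})x$ for every $\lambda>0$ and $x\in X$. By \cite[Thm.3.1.7]{ABHN:11} this identifies $(S(t))_{t\ge0}$ as a $C_0$-semigroup on $X$ with generator $A_{BC}$ as in \eqref{abc} and satisfying \eqref{VP}; positivity is immediate from that of $T(t)$, $\sBt$, $\mathcal{C}_\infty$ and of every summand of the Neumann series $\sum_{n\ge0}\F^n$. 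I expect the main obstacle to be the interpolation step: before Riesz--Thorin can be invoked one must verify that the two bounded extensions of $\F$ produced by the AM- and AL-arguments of \Cref{fin,fin2} really arise as restrictions of a single linear operator on $\tuor$. A secondary technical point is maintaining $\sBt u\in Z$ throughout, so that $C$ can be applied; this is ensured by the compatibility condition combined with the step-function identity \eqref{eq:Bt-sf-cont} already exploited in the AM-proof.
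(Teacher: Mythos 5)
Your overall architecture (interpolating the AM-bound on $\rC_0$ against the AL-bound on $\rL^1$ to control $\F$ on $\rL^p$, then Neumann series, Laplace transform, and $S(t)=T(t)+\sBt(Id-\F)^{-1}\mathcal{C}_\infty$) is exactly the paper's route. However, two steps in your proposal are genuinely gapped.

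First, the claim that $B$ and $C$ are ``automatically $p$-admissible for every $p\in[1,+\infty]$'' is false, and this is not a cosmetic issue. By \Cref{badm}, $\infty$-admissibility is the \emph{weakest} admissibility notion for a control operator, and by the remark after \eqref{coo}, $1$-admissibility is the \emph{weakest} notion for an observation operator; so positivity gives you only the weak ends of both scales, namely $\sBt\in\sL(\cwor,X)$ and $\mathcal{C}_\infty\in\sL(X,\lrv)$. These two maps live on \emph{different} function spaces: $\mathcal{C}_\infty$ lands in $\rL^1$, whose elements are not continuous, while your $\sBt$ is only defined on continuous functions, so the composition $\sBt(Id-\F)^{-1}\mathcal{C}_\infty$ is not defined. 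This is precisely why the theorem carries the hypothesis that $B$ and $C$ are both $p$-admissible for a \emph{common} finite $p$: the paper uses it (via \Cref{toop}, i.e.\ \cite[Lem.11]{ABE:14}) to obtain $\mathcal{B}_t\in\sL(\lprr,X)$ and $\mathcal{C}_\infty\in\sL(X,\lprr)$ on the same space, on which the interpolated $\F$ also acts. You must invoke this hypothesis rather than derive it.

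Second, compatibility does \emph{not} ensure $\sBt u\in Z$ for step functions $u\in\tuor$. By \eqref{eq:Bt-sf-cont} one gets $\sBt u=\sum_n\bigl(T(t-t_{n-1})-T(t-t_n)\bigr)A_{-1}^{-1}Bu_n$; compatibility puts $A_{-1}^{-1}Bu_n$ into $Z$, but $Z$ is merely a vector space with $D(A)\subseteq Z\subseteq X$ and is not assumed invariant under $T(s)$, so $C$ cannot be applied to these terms. (In \Cref{fin} this is harmless because there $Z=X$ and $C$ is bounded; here it is not.) The paper circumvents this by defining $\F$ on $\sD=C_c^2((0,+\infty),\R)$ and integrating by parts twice, as in \eqref{daz}, so that every resulting term lies either in $\rg(A_{-1}^{-1}B)\subseteq Z$ or in $D(A)\subseteq Z$; a separate, more delicate argument is then needed to make sense of $\F|u|$ for $u\in\sD$, since $|u|$ is no longer $C^2$. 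Your interpolation step itself is sound in spirit --- though note that the classical Riesz--Thorin theorem does not directly apply to positive operators on $\R$-valued function spaces, which is why the paper proves its own version (\Cref{RT}) and verifies in \Cref{lt-p1} that the two extensions agree on $C_c((0,+\infty),\R)$, a point you correctly flag as necessary.
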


For an application of this result we refer to \Cref{subsec:r1p-FD}.

\smallbreak
As in \Cref{pDS,pMV}, in the sequel we may assume that $\omega_0(A)<0$ and take $\lambda=0$.
We start by recalling that under this assumption the operators $\mathcal{B}_t$ and $\mathcal{C}_t$ defined in \eqref{btpo} and \eqref{cto} can be extended to $\mathbb{R}^+$ in the following manner.

\begin{lemma}\label{toop}
Let $(A,B,C)$ be a compatible triple and $B:\R\to\Xmo$ and $C:Z\to\R$ be $p$-admissible positive operators. Then
\begin{enumerate}[label=(\roman*)]
\item $B$ is $p$-admissible for all $t\ge0$, i.e.,  $\mathcal{B}_t\in\sL(\lprr,X)$ for every $t\ge0$. In addition, the family $(\mathcal{B}_t)_{t\ge0}$ is strongly continuous and uniformly bounded;
\item $C$ is $p$-admissible for all $t\ge0$, i.e., there exists $\mathcal{C}_\infty\in\sL(X,\lprr)$ satisfying
\begin{equation*}
	\big(\mathcal{C}_\infty x\big)(t):=CT(t)x\quad\text{for all $x\in D(A)$ and $t\ge0$.}
\end{equation*}
\end{enumerate}
\end{lemma}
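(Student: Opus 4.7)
The lemma extends the admissibility of $B$ and $C$ (initially known for some fixed $t_0>0$) to all $t\ge 0$ and then to $t=+\infty$. The key ingredients are the semigroup property, invariance under $L^p$-translations, and the standing assumption $\omega_0(A)<0$, which provides $M\ge 1$ and $\omega<0$ with $\|T(s)\|\le Me^{\omega s}$. I would split the proof into four steps.

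Step 1 (extension of $\mathcal{B}_t$ to arbitrary $t>0$). Let $t_0>0$ be a time of admissibility for $B$. For $t\in(0,t_0]$ and $u\in L^p([0,t],\mathbb{R}^N)$, I define $\tilde u\in L^p([0,t_0],\mathbb{R}^N)$ by $\tilde u(s):=u(s-t_0+t)$ on $[t_0-t,t_0]$ and $\tilde u(s):=0$ on $[0,t_0-t)$. A change of variable gives $\mathcal{B}_t u=\mathcal{B}_{t_0}\tilde u\in X$ and $\|\tilde u\|_p=\|u\|_p$, so $\|\mathcal{B}_t\|\le \|\mathcal{B}_{t_0}\|$. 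For $t>t_0$, I partition $[0,t]$ into consecutive intervals $I_k=[a_k,a_{k+1}]$ of length at most $t_0$ with $a_0=0$ and $a_{n}=t$. Using the semigroup law on $X_{-1}$, I write
\begin{equation*}
\mathcal{B}_t u=\sum_{k=0}^{n-1}T(t-a_{k+1})\,\mathcal{B}_{a_{k+1}-a_k}(u|_{I_k}),
\end{equation*}
and the previous step together with $\|T(t-a_{k+1})\|\le M e^{\omega(t-a_{k+1})}$ yields a bound that, after applying Hölder (or the trivial scalar estimate when $p=1$), is dominated by a convergent geometric series in $e^{\omega t_0}$. This simultaneously gives $\mathcal{B}_t\in\mathcal{L}(L^p([0,t],\mathbb{R}^N),X)$ for every $t>0$ and a uniform bound independent of $t$.

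Step 2 (strong continuity of $(\mathcal{B}_t)_{t\ge 0}$). For $0\le r\le t$ and $u\in L^p([0,+\infty),\mathbb{R}^N)$ the semigroup property gives
\begin{equation*}
\mathcal{B}_t u=T(t-r)\mathcal{B}_r u+\int_{r}^{t}T_{-1}(t-s)Bu(s)\,ds.
\end{equation*}
As $t\downarrow r$ the first summand tends to $\mathcal{B}_r u$ by strong continuity of $(T(s))_{s\ge 0}$, and the second is bounded by $\|\mathcal{B}_{t-r}\|\cdot\|u\,\chi_{[r,t]}\|_p$, which vanishes by absolute continuity of the $L^p$-norm and the uniform bound from Step~1; the case $t\uparrow r$ is analogous.

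Step 3 ($p$-admissibility of $C$ for every $t\ge 0$ and extension to $\mathcal{C}_\infty$). The $p$-admissibility hypothesis provides $t_0>0$ and $M_0\ge 0$ with $\int_0^{t_0}\|CT(s)x\|^p\,ds\le M_0^p\|x\|^p$ for $x\in D(A)$. For arbitrary $t>t_0$ I decompose the interval and use the semigroup property:
\begin{equation*}
\int_{0}^{t}\|CT(s)x\|^p\,ds\le \sum_{k\ge 0}\int_{0}^{t_0}\|CT(r)\,T(kt_0)x\|^p\,dr\le M_0^p\sum_{k\ge 0}\|T(kt_0)x\|^p.
\end{equation*}
Since $\|T(kt_0)x\|\le M e^{\omega k t_0}\|x\|$ and $\omega<0$, the right-hand side is bounded by a geometric series, independently of $t$. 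Letting $t\to+\infty$ yields a constant $M_\infty\ge 0$ such that $x\mapsto CT(\cdot)x$ extends by density from $D(A)$ to a bounded operator $\mathcal{C}_\infty\in\mathcal{L}(X,L^p([0,+\infty),\mathbb{R}^N))$ with $(\mathcal{C}_\infty x)(s)=CT(s)x$ for $x\in D(A)$, proving~(ii). Restricting to $[0,t]$ then shows $p$-admissibility of $C$ for every $t\ge 0$.

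The conceptually routine but technically delicate point is the uniform-in-$t$ bound in Steps~1 and~3: one must combine the local admissibility estimate on a window of length $t_0$ with the exponential decay of the semigroup so that the resulting telescoping (or summation) controls the integrals over arbitrarily long intervals. This is exactly where the standing normalization $\omega_0(A)<0$ (allowed by rescaling) is essential.
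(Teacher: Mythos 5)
The paper's own ``proof'' of this lemma is a bare citation to \cite[Lem.11]{ABE:14}, so your self-contained argument supplies the content that the paper outsources, and it follows the standard route behind that citation: translation invariance of $\sBt$ for $t\le t_0$, a semigroup-law decomposition into windows of length $t_0$ summed against the exponential decay from the normalization $\omega_0(A)<0$, and the analogous windowing for $\mathcal{C}_\infty$. Steps 1 and 3 are correct as written (and, consistently with the cited general result, positivity is never actually used).

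There is, however, a genuine gap in Step 2. Your decomposition handles $t\downarrow r$, but the claim that ``the case $t\uparrow r$ is analogous'' does not go through: the analogous identity is
\[
\mathcal{B}_r u-\mathcal{B}_t u=\bigl(T(r-t)-I\bigr)\mathcal{B}_t u+\int_t^r T_{-1}(r-s)Bu(s)\,ds,
\]
and now the semigroup difference acts on the \emph{moving} vector $\mathcal{B}_t u$. Strong convergence $T(h)\to I$ together with the uniform bound $\sup_t\|\mathcal{B}_t u\|<\infty$ does not force $(T(r-t)-I)\mathcal{B}_t u\to0$, and replacing $\mathcal{B}_t u$ by $\mathcal{B}_r u$ plus an error term only yields the useless inequality $\delta(t)\le o(1)+(1+M)\,\delta(t)$ for $\delta(t):=\|\mathcal{B}_r u-\mathcal{B}_t u\|$. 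The standard repair: first prove continuity of $t\mapsto\sBt u$ for $u$ in a dense subspace where it is explicit --- e.g.\ for step functions, where $\int_a^b T_{-1}(t-s)Bv\,ds=\bigl(T(t-a)-T(t-b)\bigr)A_{-1}^{-1}Bv$ with $A_{-1}^{-1}Bv\in X$, so each summand is continuous in $t$ --- and then pass to arbitrary $u\in\lprr$ using the uniform bound from your Step 1, since a locally uniform limit of continuous functions is continuous. With this insertion your proof is complete.
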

\begin{proof} 
See \cite[Lem.11]{ABE:14}. 
\end{proof}

In the sequel we denote by $\ceor:=$
\[
\Biggl\{f\in C\bigl((0,+\infty),\R\bigr)\ \bigg|\ 
\begin{aligned}
&\text{$\forall\varepsilon>0$  $\exists$  compact $K_\varepsilon\subset(0,+\infty)$}\\
&\text{such that $|f(s)|_{\R}<\varepsilon$ $\forall s\in(0,+\infty)\setminus K_\varepsilon$}
\end{aligned}
\Biggr\}
\]
the Banach space of all continuous $\R$-valued functions vanishing at zero and infinity endowed with the $\infty$-norm given by
\[
\|f\|_\infty:=\sup_{s\ge0}\bigl|f(s)\bigr|_{\R}.
\]
Here $\lvert\cdot\rvert_{\R}$ denotes an arbitrary norm on $\R$ where different choices of $|\cdot|_{\R}$ yield to equivalent norms.
Then we define the auxiliary space
\begin{equation*}
\mathcal{D}:= C_c^2\bigl((0,+\infty),\R\bigr)
\end{equation*}
which is a dense subspace of $\ceor$ as well as of $\lprr$ for all $1\le p<+\infty$.
Since for every $u\in\mathcal{D}$ there exist $0<a<b<+\infty$ such that $\text{supp}(u)\subset[a,b]$ it follows $u^{(k)}(0)=0$ for $k=0,1,2$.
The subset of $\mathcal{D}$ containing the positive functions will be denoted by $\mathcal{D}_+$. If $u\in\ceor$ or $u\in\lprr$, then thanks to the lattice setting we can decompose it as $u=u^+-u^-$, where $u^+$ and $u^-$, respectively, denote the positive and the negative part of $u$.
Finally, we define
\begin{equation}\label{hatu}
\bar{u}:=\lVert u\rVert_\infty\cdot 1_{\W}\in \R,
\end{equation}
where $1_{\W}:=(1,\dots,1)\in\R$ is the unit of $\R$. This element can also be identified by the constant function $t\mapsto\bar{u}$, which belongs to the Banach lattice $C_b([0,+\infty),\R)$ of all bounded continuous $\R$-valued functions on the interval $[0,+\infty)$ equipped with the sup-norm $\|\cdot\|_\infty$. Moreover, it holds
\begin{equation*}
|u|\le\bar{u}\quad\text{and}\quad\lVert\bar{u}\rVert_\infty=\lVert u\rVert_\infty.
\end{equation*}

With this notation we have the following result
\begin{lemma}\label{lemmaoo}
Let $(A,B,C)$ be compatible and $B:\R\to\Xmo$ and $C:Z\to\R$ be positive. Then
\begin{equation}\label{fto}
\big(\mathcal{F}_\infty u\big)(t):=C\int_{0}^{t}T_{-1}(t-s)Bu(s)\,ds,\quad t\ge0
\end{equation}
is well-defined for all $u\in\mathcal{D}\cup|\mathcal{D}|$ where $|\mathcal{D}|:=\{|u|:u\in\mathcal{D}\}$. In addition, 
\begin{enumerate}[label=(\roman*)]
\item $\mathcal{F}_\infty$ possesses a unique bounded positive extension (still denoted by $\mathcal{F}_\infty$) $\F\in\sL(\cwor)$ satisfying
\begin{equation} \label{eq:Fi-C0}
\bigl\lVert\mathcal{F}_\infty^n\bigr\rVert_{\sL(C_0)}\le\bigl\lVert (CA_{-1}^{-1}B)^n\bigr\rVert_{\sL(\W)}\quad\text{for all $n\in\NN$};
\end{equation}
\item $\mathcal{F}_\infty$ possesses a unique bounded positive extension (still denoted by $\mathcal{F}_\infty$) $\mathcal{F}_\infty\in\sL(\lrv)$ satisfying
\begin{equation} \label{eq:Fi-L1}
\bigl\lVert\mathcal{F}_\infty^n\bigr\rVert_{\sL(\rL^1)}\le\bigl\lVert (CA_{-1}^{-1}B)^n\bigr\rVert_{\sL(\V)}\quad\text{for all $n\in\NN$}.
\end{equation}
\end{enumerate}
\end{lemma}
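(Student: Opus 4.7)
The plan is to combine the strategies of the proofs of Lemma \ref{fin} and Lemma \ref{fin2} in parallel, exploiting that $U=\R$ is simultaneously an AM-space (under $|\cdot|_\infty$, giving $\W$) and an AL-space (under $|\cdot|_1$, giving $\V$). First I would address well-definedness on $\mathcal{D}\cup|\mathcal{D}|$. For $u\in\mathcal{D}$, since $u\in C^2_c((0,+\infty),\R)$ we have $u(0)=0$, so writing $B=-A_{-1}(A_{-1}^{-1}B)$ and integrating by parts (using $\frac{d}{ds}T_{-1}(t-s)=-A_{-1}T_{-1}(t-s)$ together with the compatibility $A_{-1}^{-1}B:\R\to Z$) gives
\begin{equation*}
\mathcal{B}_t u = A_{-1}^{-1}B u(t) - \int_0^t T(t-s)\,A_{-1}^{-1}B u'(s)\,ds,
\end{equation*}
where the first term lies in $Z$ (so $C$ applies) and the second, an $X$-valued integral of a continuous map into $Z\subseteq X$, is interpreted through the $p$-admissibility of $C$ via the observability map $\mathcal{C}_\infty$. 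The same reasoning handles $|u|$ for $u\in\mathcal{D}$ since $|u|$ is also Lipschitz with compact support in $(0,+\infty)$ and vanishes at $0$.

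For part (i) I would mimic the argument of Lemma \ref{fin} step-by-step. The key pointwise inequality is obtained as follows: for $0\le u\in\mathcal{D}$, positivity of $\lambda R(\lambda,A_{-1})B$, of $(T(t))_{t\ge0}$, and $u(s)\le\bar{u}$ yield $0\le\mathcal{B}_t u\le -A_{-1}^{-1}B\bar{u}$, and applying $C\ge 0$ gives $0\le(\mathcal{F}_\infty u)(t)\le -CA_{-1}^{-1}B\,\bar{u}$. An induction identical to the one leading to \eqref{cabn} yields $0\le(\mathcal{F}_\infty^n u)(t)\le(-CA_{-1}^{-1}B)^n\bar{u}$. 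For arbitrary $u\in\mathcal{D}$, the usual $|\mathcal{F}_\infty^n u|\le\mathcal{F}_\infty^n|u|$ combined with the AM-property of $\W$ and $\|\bar{u}\|_\W=\|u\|_\infty$ yields $\|\mathcal{F}_\infty^n u\|_\infty\le\|(CA_{-1}^{-1}B)^n\|_{\mathcal{L}(\W)}\|u\|_\infty$. I would then check $\mathcal{F}_\infty u\in\cwor$ for $u\in\mathcal{D}$ (decay at $+\infty$ follows from $\omega_0(A)<0$ and the representation above, while vanishing near $0$ is immediate since $u$ is compactly supported in $(0,+\infty)$), and extend by density of $\mathcal{D}$ in $\cwor$.

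For part (ii) I would copy the Fubini-based argument of Lemma \ref{fin2}. Starting from $0\le u\in\mathcal{D}$, the computation \eqref{eq:Fub-AL}—with $T(\cdot)$ acting on $A_{-1}^{-1}B u(\cdot)\in Z$ and using the AL-property of $\V$—gives
\begin{equation*}
\int_0^{+\infty}\bigl|(\mathcal{F}_\infty u)(t)\bigr|\,dt\le -CA_{-1}^{-1}B\int_0^{+\infty}|u(t)|\,dt.
\end{equation*}
The induction \eqref{eq:Fin-L1} and the AL-norm identity $\|\int|f|\|_\V=\|f\|_1$ then produce $\|\mathcal{F}_\infty^n u\|_1\le\|(CA_{-1}^{-1}B)^n\|_{\mathcal{L}(\V)}\|u\|_1$. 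Density of $\mathcal{D}$ in $\lrv$ gives the unique bounded extension to $\lrv$, with positivity preserved by continuity.

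The main obstacle is the well-definedness step: unlike Lemmas \ref{fin} and \ref{fin2}, here \emph{both} $B$ and $C$ may be unbounded, so the range of $\mathcal{B}_t$ need not lie in $D(C)=Z$ even after testing against a step function. The $C^2$-regularity built into $\mathcal{D}$, together with compatibility $A_{-1}^{-1}B:\R\to Z$, is exactly what allows the integration-by-parts manoeuvre that puts the ``boundary term'' into $Z$ while the residual integral can be handled by admissibility. Once this is in place, the two norm estimates are essentially a faithful duplication of the AM- and AL-arguments already carried out.
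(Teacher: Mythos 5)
Your overall strategy (run the AM- and AL-arguments of Lemmas \ref{fin} and \ref{fin2} in parallel on $\W$ and $\V$) is exactly the paper's, and your parts (i) and (ii) reproduce its estimates faithfully. However, the well-definedness step -- which you correctly identify as the main obstacle -- has two genuine gaps. First, a single integration by parts is not enough: it leaves the residual term $\int_0^t T(t-s)A_{-1}^{-1}Bu'(s)\,ds$, which is an $X$-valued integral of a $Z$-valued continuous function and therefore lies in $X$ but not, in general, in $Z=D(C)$, so $C$ cannot be applied to it. Your proposal to ``interpret it through the $p$-admissibility of $C$'' does not close this: the lemma does not assume any admissibility (that hypothesis appears only in \Cref{PT}), and even if it did, the observability map gives meaning to $s\mapsto CT(s)x$ for a \emph{fixed} $x\in X$, not to $C$ applied to an integral whose integrand varies in $s$. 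This is precisely why $\mathcal{D}$ consists of $C^2$ functions: the paper integrates by parts \emph{twice}, so the residual integral appears premultiplied by $A^{-1}$ and hence lands in $D(A)\subseteq Z$, while the two boundary terms $-A_{-1}^{-1}Bu(t)$ and $-A_{-1}^{-2}Bu'(t)$ lie in $Z$ by compatibility and in $D(A)$, respectively (see \eqref{daz}).

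Second, your claim that ``the same reasoning handles $|u|$'' because $|u|$ is Lipschitz is not sufficient. For $u\in\mathcal{D}$ the function $|u_k|$ has corners at the simple zeros of $u_k$, so it is not $C^1$, let alone $C^2$, and neither the single nor the double integration by parts can be performed globally. The paper's proof devotes most of its length to exactly this point: it writes the open set $\{u_k\ne0\}$ as a countable disjoint union of intervals $(a_j,b_j)$ on each of which $|u_k|$ is $C^2$ and vanishes at the endpoints, integrates by parts twice on each interval, verifies that the resulting series converges in $D(A)\subseteq Z$, and treats separately the remainder interval containing $t$. Without this decomposition the expression $(\F|u|)(t)$ -- and with it the positivity estimate $0\le(\F u^\pm)(t)$ and the pointwise bound \eqref{eq:Fi-pos} on which both norm estimates rest -- is not justified. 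The remainder of your argument is sound once these two points are repaired.
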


\begin{proof}
We first prove that $(\mathcal{F}_{\infty}u)(t)$ in \eqref{fto} is well-defined for $u\in\mathcal{D}$.
Let $u\in\mathcal{D}$ and $t\ge0$. Integrating by parts twice and using that $u(0)=u'(0)=0$, we conclude
\begin{align}\label{daz}
\int_{0}^{t}&T_{-1}(t-s)Bu(s)ds\notag\\
&=\big[-A_{-1}^{-1}T_{-1}(t-s)Bu(s)\big]_{0}^{t}+\int_{0}^{t}A_{-1}^{-1}T_{-1}(t-s)Bu'(s)\,ds\notag\\
&=-A_{-1}^{-1}Bu(t)+\int_{0}^{t}T(t-s)A_{-1}^{-1}Bu'(s)\,ds\notag\\
&=-A_{-1}^{-1}Bu(t)+\big[-A_{-1}^{-2}T_{-1}(t-s)Bu'(s)\big]^t_0+\int_{0}^{t}T(t-s)A_{-1}^{-2}Bu''(s)\,ds\notag\\
&={-A_{-1}^{-1}Bu(t)}-{A_{-1}^{-2}Bu'(t)}+{A^{-1}\int_{0}^{t}T(t-s)A_{-1}^{-1}Bu''(s)\,ds}.
\end{align}
Here the first term in \eqref{daz} belongs to $Z=D(C)$ thanks to the admissibility of the triple $(A,B,C)$, while the second and the third term are in $D(A)\subseteq Z$. Hence, $(\mathcal{F}_{\infty}u)(t)$ is well-defined for all $u\in\mathcal{D}$. 

\smallskip
Next we show that also $(\F|u|)(t)$ is well-defined for all $u\in\mathcal{D}$. By definition, every $u\in\mathcal{D}$ is a $\R$-valued compactly supported function with $\text{supp}(u)\subset[a,b]$ for some $0<a<b<+\infty$. Therefore, it can be represented as
\begin{equation*} 
u:=\sum_{k=1}^{N}u_ke_k,
\end{equation*}
where $e_k=(\delta_{ik})_{i=1}^N\in\mathbb{R}^N$ and $u_k\in C_c^2((0,+\infty),\mathbb{R})$ with $\text{supp}(u_k)\subset[a,b]$ for any $k=1,\dots,N$. Moreover,  we have $|u|=\sum_{k=1}^{N}|u_k|\,e_k$.

Now fix $k\in\{1,\dots,N\}$ and consider the closed set $\mathcal{N}:=\{s\in[0,+\infty):\,u_k(s)=0\}$
and let $\mathcal{O}:=(0,+\infty)\setminus\mathcal{N}$. Since $\mathcal{O}$ is an open set, there exists $J\subseteq\mathbb{N}$ such that 
$$\mathcal{O}=\dot\bigcup_{j\in J}I_j,$$
where $I_j:=(a_j,b_j)$ for $j\in J$ are pairwise disjoint. Notice that
\begin{equation}\label{c2}
|u_k|_{|[a_j,b_j]}\in C^2(I_j,\mathbb{R})\quad\text{and\quad$u_k(a_j)=u_k(b_j)=0$ for all $j\in J$.}
\end{equation}
Next, for $t>0$ we define the set $J_t:=\{j\in J:\,b_j\le t\}$.
Then either $t\in\mathcal{N}$, or there exists (a unique) $j_t\in J\setminus J_t$ such that $t\in(a_{j_t},b_{j_t})$. 
Therefore, 
\begin{equation}\label{u2}
\begin{split}
	\int_{0}^{t}T_{-1}(t-s)B|u_k(s)|e_k\,ds
	&=\int_{\mathcal{O}\cap(0,t)}T_{-1}(t-s)B|u_k(s)|e_k\,ds\\
	&=\sum_{j\in J_t}\int_{a_j}^{b_j}T_{-1}(t-s)B|u_k(s)|e_k\,ds\,+\,R,
\end{split}
\end{equation}
where 
\begin{equation*} 
R:=\begin{cases}
	0,&\text{if}\,\,t\in\mathcal{N},\\
	\int_{a_{j_t}}^{t}T_{-1}(t-s)B|u_k(s)|e_k\,ds,&\text{if}\,\,t\in(a_{j_t},b_{j_t}).
\end{cases}
\end{equation*}
By \eqref{c2} we have $v_k:=|u_k|_{|[a_j,b_j]}\in C^2(I_j,\mathbb{R})$ and $v_k(a_j)=v_k(b_j)=0$ for any $j\in J_t$. Hence, integrating by parts twice as in \eqref{daz} it follows that
\begin{align*} 
	\int_{a_j}^{b_j}T_{-1}(t-s)&Bv_k(s)e_k\,ds=\\
	&A^{-1}\Bigl(T(t-a_j)A_{-1}^{-1}Bv_k'(a_j)e_k-T(t-b_j)A_{-1}^{-1}Bv_k'(b_j)e_k\Bigr)\\
	+&A^{-1}\int_{a_j}^{b_j}T(t-s)A_{-1}^{-1}Bv_k''(s)e_k\,ds.
\end{align*}
Therefore, the series in \eqref{u2}  converges to an element in $D(A)\subseteq Z$. 
As far as $R$ is concerned, if $t\in(a_{j_t},b_{j_t})$, we recall that $v_k:=|u_k|_{|[a_{j_t},t]}\in C^2((a_{j_t},t),\mathbb{R})$ and $v_k(a_{j_t})=0$. Hence, integrating again twice by parts, we obtain
\begin{equation}\label{eq:ipp-R}
\begin{split}
	\int_{a_{j_t}}^{t}T_{-1}(t-s)B&v_k(s)e_k\,ds=-A_{-1}^{-1}Bv_k(t)e_k\\
	+&A^{-1}\Big(T_{-1}(t-a_{j_t})A_{-1}^{-1}Bv_k'(a_{j_t})e_k-A_{-1}^{-1}Bv_k'(t)e_k\Big)\\
	+&A^{-1}\int_{a_{j_t}}^{t}T(t-s)A_{-1}^{-1}Bv_k''(s)e_k\,ds.
\end{split}
\end{equation}
Since the triple $(A,B,C)$ is compatible, the first term on the right-hand-side of \eqref{eq:ipp-R} belongs to $Z$ and the remaining ones to $D(A)$, hence the integral belongs to $Z$. By \eqref{u2} this implies that $\int_{0}^{t}T_{-1}(t-s)B|u_k(s)|e_k\,ds\in Z$ for all $k=1,\dots,N$. Hence, $(\F|u|)(t)$ is well-defined for all $u\in\mathcal{D}$. 
Since $u^+=\frac{|u|+u}{2}$ and $u^-=\frac{|u|-u}{2}$ this also implies that $(\F u^+)(t)$ and $(\F u^-)(t)$ are well-defined for all $u\in\mathcal{D}$. 

\smallskip
Next we verify that $(\F u)(t)\ge0$ for all $u\in|\sD|$. Using the positivity of $(T(t))_{t\ge0}$ and $\lambda R(\lambda,A_{-1})B$ for $\lambda>\max\{0,\spb(A)\}$ we conclude
\begin{align*}
0&\le \int_{0}^{t}T(t-s)\lambda R(\lambda,A_{-1})Bu(s)\,ds
\le\int_{0}^{t}\lambda R(\lambda,A_{-1})T_{-1}(s) B\ub\,ds\\
&\le\int_{0}^{+\infty}\lambda R(\lambda,A_{-1})T_{-1}(s) B\ub\,ds
=-\lambda R(\lambda,A)A_{-1}^{-1} B\ub
\end{align*} 
for all $\lambda>\max\{0,\spb(A)\}$, where $\bar u$ is given by \eqref{hatu}. 
Since $\lambda R(\lambda,A_{-1})\to Id$ in $X$ and $X_{-1}$ we conclude that $-\lambda R(\lambda,A)A_{-1}^{-1} B\ub\to -A_{-1}^{-1}B\ub$ and, by \cite[Prop.A.3]{EN:00},
\[
\lambda R(\lambda,A_{-1})T_{-1}(s)Bu(t-s)\to T_{-1}(s)Bu(t-s)
\quad\text{as $\lambda\to+\infty$}
\]
uniformly for $s\in[0,t]$ in $X_{-1}$. Hence, passing to the limit as $\lambda\to+\infty$ we obtain
\begin{equation*}
0\le\int_{0}^{t}T_{-1}(t-s)Bu(s)\,ds\le-A_{-1}^{-1}B\ub.
\end{equation*}
Multiplying this inequality by the positive operator $C:Z\to\R$ from the left we conclude that 
\begin{equation}\label{eq:Fi-pos}
0\le (\F u)(t)\le-CA_{-1}^{-1}B\ub\quad \text{for all $u\in|\sD|$}
\end{equation}
as claimed. 

\smallbreak
We proceed by proving (i). Let $u\in\mathcal{D}$. First we verify that $\mathcal{F}_{\infty}u\in\cwor$.

Clearly, $(\mathcal{F}_{\infty}u)(0)=0$. Moreover,  by \eqref{daz} we have
\begin{equation}\label{ft}
\begin{split}
	(\mathcal{F}_{\infty}u)(t)=-CA_{-1}^{-1}Bu(t)&-CA_{-1}^{-2}Bu'(t)\\&+CA^{-1}\int_{0}^{t}T(t-s)A_{-1}^{-1}Bu''(s)ds,
\end{split}
\end{equation}
which defines a continuous map $[0,+\infty)\ni t\mapsto(\mathcal{F}_{\infty}u)(t)\in\R$. Next we show that $\lim_{t\to+\infty}(\mathcal{F}_\infty u)(t)=0$. Since $u$ is compactly supported there exists $b>0$ such that $u(t)=0$ for all $t\ge b$. Clearly, this implies $u'(t)=u''(t)=0$ for all $t>b$, and therefore
\begin{equation*}
\begin{split}
	\lim_{t\to+\infty}CA_{-1}^{-1}Bu(t)=0\quad\text{and}\quad
	\lim_{t\to+\infty} CA_{-1}^{-2}Bu'(t)=0.
\end{split}
\end{equation*}
Moreover, for $t>b$ the last term in \eqref{ft} becomes
\begin{equation*}
\begin{split}
	CA^{-1}\int_{0}^{t}T(t-s)A_{-1}^{-1}Bu''(s)\,ds&=CA^{-1}\int_{0}^{b}T(t-s)A_{-1}^{-1}Bu''(s)\,ds\\&=CA^{-1}T(t-b)\int_{0}^{b}T(b-s)A_{-1}^{-1}Bu''(s)\,ds.
\end{split}
\end{equation*}
Here the integral term is independent of $t$, while $\lVert T(t-b)\rVert\to0$ as $t\to+\infty$. This implies that
$$\lim_{t\to+\infty}CA^{-1}\int_{0}^{t}T(t-s)A_{-1}^{-1}Bu''(s)\,ds=0$$
and hence $\lim_{t\to+\infty}(\mathcal{F}_\infty u)(t)=0$ for any $u\in\mathcal{D}$. 

\smallbreak
Next, using \eqref{eq:Fi-pos} we obtain for any $u=u^+-u^-\in\mathcal{D}$ and $t\ge0$
\begin{align}\label{baru}
\bigl|(\mathcal{F}_\infty u)\bigr|(t)
&=\left\lvert(\F u^+)(t)-(\F u^-)(t)\right\rvert\notag\\
	 &\le (\F u^+)(t) + (\F u^-)(t)\\
	&=\bigl(\F|u|\bigr)(t)
\le-C A_{-1}^{-1}B\,\bar{u} .\notag
\end{align}
This yields that for all $u\in\mathcal{D}$ and $t\ge0$ we have
\begin{equation*}
\bigl\lVert(\mathcal{F}_\infty u)(t)\bigr\rVert_{\W}\le\bigl\lVert C A_{-1}^{-1}B\bar{u}\bigr\rVert_{\W}\le\lVert CA_{-1}^{-1}B\rVert_{\sL(\W)}\cdot\lVert u\rVert_\infty.
\end{equation*}
Passing to the supremum over all $t\ge0$ we obtain
\begin{equation}\label{estf2}
\lVert\mathcal{F}_\infty u\rVert_\infty\le\lVert C A_{-1}^{-1}B\rVert_{\sL(\W)}\cdot\lVert u\rVert_\infty\quad\text{for all}\,\,u\in\mathcal{D}.
\end{equation}
Summing up, this shows that $\mathcal{F}_\infty:\mathcal{D}\subset\cwor\rightarrow\cwor$ is bounded, positive and satisfies \eqref{estf2}. Hence, there exists a unique positive  extension $\mathcal{F}_\infty\in\sL(\cwor)$ satisfying
\eqref{eq:Fi-C0} for $n=1$. 

\smallbreak
In order to verify \eqref{eq:Fi-C0} for arbitrary $n\in\NN$ we proceed by induction and first show that 
\begin{equation}\label{cabn-C0}
\big(\F^n |u|\big)(t)\le\bigl(-C A_{-1}^{-1}B\bigr)^n\bar{u}
\quad\text{for all $u\in\sD$, $t\ge0$ and $n\in\NN$},
\end{equation}
where $\bar u$ is given by \eqref{hatu}.
For $n=1$ this was shown in \eqref{baru}.  Now assume it holds for some $n\in\NN$. Then
\begin{equation*}
\begin{split}
\big(\F^{n+1} |u|\big)(t)
&=\F\big(\F^n |u|\big)(t)=C\int_{0}^{t}T_{-1}(s)B\bigl(\F^n |u|\bigr)(t-s)\,ds\\
&\le C\int_{0}^{+\infty}T_{-1}(s)B\bigl(-C A_{-1}^{-1}B\bigr)^n\bar{u}\,ds\\
&=-C A_{-1}^{-1}B\bigl(-C A_{-1}^{-1}B\bigr)^n\bar{u}=\bigl(-C A_{-1}^{-1}B\bigr)^{n+1}\bar{u}
\end{split}
\end{equation*}
proving \eqref{cabn-C0}.
Arguing as in \eqref{eq:est-Fin} this gives $\|(\F^n u)(t)\|_{\W}\le\|(C A_{-1}^{-1}B)^n\|_{\sL(\W)}\cdot\|u\|_\infty$ for every $t\ge0$. Passing  the supremum over all $t\ge0$ this proves \eqref{eq:Fi-C0} and concludes the proof of (i).

\smallbreak
(ii). Take $u\in\mathcal{D}$.  Then by Fubini's Theorem we obtain
\begin{align*} 
\int_0^{+\infty}\int_0^t T_{-1}(t-s)Bu(s)\,ds\,dt
&=\int_0^{+\infty}\int_s^{+\infty} T_{-1}(t-s)Bu(s)\,dt\,ds\\
&=\int_0^{+\infty}\int_0^{+\infty} T_{-1}(r)Bu(s)\,dr\,ds\\
&=-A_{-1}^{-1}B\int_0^{+\infty} u(s)\,ds.
\end{align*}
Multiplying this equation by $C$ from the left, using integration by parts and \Cref{binf} we obtain for $u\in\mathcal{D}$ that
\begin{align}\label{eq:Fi1-L1}
-CA_{-1}^{-1}B\int_0^{+\infty}\!\!\! u(s)\,ds\notag
&=C\int_0^{+\infty}\int_0^t T_{-1}(t-s)Bu(s)\,ds\,dt\\\notag
&=C\int_0^{+\infty}\!\!\!-A_{-1}^{-1}Bu(t)+\int_0^t A_{-1}^{-1}T_{-1}(t-s)Bu'(s)\,ds\,dt\\\notag
&=-CA_{-1}^{-1}B\int_0^{+\infty}\!\!\!u(t)\,dt+CA^{-1}\int_0^{+\infty}\!\!\int_0^t T_{-1}(t-s)Bu'(s)\,ds\,dt\\\notag
&=\int_0^{+\infty}\!\!\!-CA_{-1}^{-1}Bu(t)\,dt+\int_0^{+\infty}CA^{-1}\int_0^t T_{-1}(t-s)Bu'(s)\,ds\,dt\\\notag
&=\int_0^{+\infty}\!\!\!-CA_{-1}^{-1}\Bigl(Bu(t)-\int_0^t T_{-1}(t-s)Bu'(s)\,ds\Bigr)\,dt\\\notag
&=\int_0^{+\infty}C\int_0^t T_{-1}(t-s)Bu(s)\,ds\,dt\\
&=\int_0^{+\infty}(\F u)(t)\,dt.
\end{align}
Next, since $\V=(\R,|\cdot|_1)$ is an AL-space, we conclude for $u\in\mathcal{D}$
\begin{equation}\label{bb}
\begin{split}
\lVert\mathcal{F}_\infty u\rVert_{1}
&\le \int_0^{+\infty}\|(\F|u|)(t)\|_{\V}dt
=\Bigl\|\int_0^{+\infty}(\F|u|)(t)dt\Bigr\|_{\V}\\
&=\Bigl\lVert -C A_{-1}^{-1}B \int_0^{+\infty}|u(t)|dt\Bigr\rVert_{\V}
\le\lVert C A_{-1}^{-1}B\rVert_{\sL(\V)}\cdot\lVert u\rVert_{1}.
\end{split}
\end{equation}
Summing up, we showed that $(\mathcal{F}_\infty u)(t)$ given by \eqref{fto} defines a positive linear operator $\mathcal{F}_\infty:\mathcal{D}\subset\lrv\to\lrv$ satisfying \eqref{bb}. Hence, $\mathcal{F}_\infty$ possesses a unique positive extension $\mathcal{F}_\infty\in\sL(\lrv)$ satisfying \eqref{eq:Fi-L1}  for $n=1$.

In order to verify \eqref{eq:Fi-L1} for arbitrary $n\in\NN$ we proceed by induction and first show
that for any $u\in\lrv$ and $t\ge0$ we have
\begin{equation}\label{cabn-L1}
\int_{0}^{+\infty}\bigl\lvert(\F^nu)(t)\bigr\rvert\,dt\le\bigl(-CA_{-1}^{-1}B\bigr)^n\int_{0}^{+\infty}\bigl|u(t)\bigr|\,dt\quad\text{for all}\,\,n\in\mathbb{N}.
\end{equation}
Since $\mathcal{D}\subset\lrv$ is dense, we first observe that \eqref{eq:Fi1-L1} holds for all $u\in\lrv$. Using the positivity of $\F$ this implies
\[
\int_0^{+\infty}\bigl|(\F u)\bigr|(t)\,dt
\le \int_0^{+\infty}\bigl(\F |u|\bigr)(t)\,dt
\le-CA_{-1}^{-1}B\int_0^{+\infty}\bigl|u(t)\bigr|\,dt,
\]
i.e., \eqref{cabn-L1} for $n=1$. The general case $n\in\NN$ then follows by induction as in \eqref{eq:Fin-L1}. Proceeding as in \eqref{eq:Fin-L1-n} this implies \eqref{eq:Fi-L1}.
This completes the proof of \Cref{lemmaoo}.
\end{proof}

\begin{lemma}\label{lt-p1}
Let $(A,B,C)$ be compatible and $B:\R\to\Xmo$ and $C:Z\to \R$ be positive. Then for all $1\le p<+\infty$ the operator $\F$ defined by \eqref{fto} possesses a unique bounded extension (still denoted by $\F$)  $\F\in\sL(\lprr)$ satisfying the estimate
\begin{equation}\label{foon3}
\bigl\lVert\F^{n}\bigr\rVert_{\sL(\rL^p)}
\le\bigl\lVert (CA_{-1}^{-1}B)^n\bigr\rVert_{\sL(\V)}^{1/p}\cdot\bigl\lVert(CA_{-1}^{-1}B)^n\bigr\rVert_{\sL(\W)}^{1-1/p}
\quad\text{for every }n\in\NN.
\end{equation}
\end{lemma}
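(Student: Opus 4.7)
The plan is to derive \eqref{foon3} by Riesz--Thorin interpolation between the endpoint estimates of \Cref{lemmaoo}, and then extend $\F^n$ from $\mathcal D$ to $\lprr$ by density.

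Fix $n\in\NN$ and set $M_1:=\bigl\|(CA_{-1}^{-1}B)^n\bigr\|_{\sL(\V)}$ and $M_\infty:=\bigl\|(CA_{-1}^{-1}B)^n\bigr\|_{\sL(\W)}$. Iterating the concrete formula \eqref{fto} defines $\F^n$ unambiguously on the dense subspace $\mathcal D$, and the two extensions produced in \Cref{lemmaoo}(i)--(ii) both agree with this iterated formula on $\mathcal D$. Consequently, for every $u\in\mathcal D$ one has simultaneously
\[
\bigl\|\F^n u\bigr\|_{\rL^\infty(\W)}\le M_\infty\|u\|_{\rL^\infty(\W)}
\quad\text{and}\quad
\bigl\|\F^n u\bigr\|_{\rL^1(\V)}\le M_1\|u\|_{\rL^1(\V)},
\]
which places $\F^n$ (on $\mathcal D$) in the common domain of two endpoint Lebesgue spaces with controlled norms.

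Since $(\V,\W)$ forms a compatible Banach pair on $\R$ whose complex interpolation space at parameter $\theta=1-1/p$ is (isometrically) $(\R,|\cdot|_p)$, and since $\mathcal D$ is dense in $\lprr$ for every $1\le p<\infty$, the vector-valued Riesz--Thorin convexity theorem yields for $u\in\mathcal D$
\[
\bigl\|\F^n u\bigr\|_p\le M_1^{1/p}\cdot M_\infty^{1-1/p}\cdot\|u\|_p,
\]
which by density produces the desired unique bounded extension $\F^n\in\sL(\lprr)$ satisfying \eqref{foon3}.

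The main technical point I expect is justifying Riesz--Thorin in this vector-valued setting, where the target norm on the finite-dimensional space $\R$ varies between the endpoints (from $|\cdot|_1$ to $|\cdot|_\infty$). Because $\R$ is finite-dimensional, the classical complex interpolation identity $[(\R,|\cdot|_1),(\R,|\cdot|_\infty)]_\theta=(\R,|\cdot|_p)$ with $1/p=1-\theta$ makes the vector-valued convexity theorem directly applicable; as a fallback one may decompose $\F^n$ into an $N\times N$ matrix of scalar operators on $\rL^p([0,+\infty),\mathbb R)$ and apply the scalar Riesz--Thorin entry-by-entry, with the $\sL(\V)$- and $\sL(\W)$-operator norms controlling the matrix entries, then reassemble the bound using the explicit formulas for $|\cdot|_\V$ and $|\cdot|_\W$.
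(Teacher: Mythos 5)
Your proposal is correct and follows the same architecture as the paper's proof: both take the two endpoint bounds of \Cref{lemmaoo}, check that the $\rC_0$- and $\rL^1$-extensions of $\F$ agree on a common dense subspace, and then interpolate to get \eqref{foon3} before extending by density. The one place where your route genuinely diverges is the interpolation step itself. You invoke the complex-interpolation identity $[(\R,|\cdot|_1),(\R,|\cdot|_\infty)]_\theta=(\R,|\cdot|_p)$ and the vector-valued Riesz--Thorin theorem; the paper instead proves and applies its own \Cref{RT}, a Riesz--Thorin convexity theorem for \emph{positive} operators on real spaces, established via a H\"older-type inequality in the spirit of Haase. This is not a cosmetic difference: the spaces here are real Banach lattices, and for real scalars the classical Riesz--Thorin constant $M_1^{1/p}M_\infty^{1-1/p}$ is in general not attained without an extra factor unless one either complexifies or exploits positivity. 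Since $\F^n$ is positive, your argument can be repaired (complexification of a positive operator does not increase its norm, or one simply cites \Cref{RT} directly), but as written the appeal to the ``classical'' theorem glosses over exactly the issue the paper's appendix is designed to handle. Your fallback of applying scalar Riesz--Thorin to the $N\times N$ matrix of entries is the weaker part of the proposal: entrywise bounds of the form $\|T_{ij}\|_{\sL(\rL^p)}\le\|T_{ij}\|_{\sL(\rL^1)}^{1/p}\|T_{ij}\|_{\sL(\rL^\infty)}^{1-1/p}$ do not reassemble into the operator-norm bound between $\rL^p(\ell^p_N)$ spaces without a further (again positivity-based) matrix interpolation argument, so this route should not be presented as a routine reduction. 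Finally, note that the paper verifies agreement of the two extensions on all of $C_c((0,+\infty),\R)$ by uniform approximation from $\mathcal D$, which is marginally cleaner than your appeal to agreement of the iterated formulas on $\mathcal D$ alone, though both suffice for the density argument.
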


\begin{proof} By \Cref{lemmaoo}, $\F$ defined by \eqref{fto} on $\sD=C_c^2((0,+\infty),\R)$ possesses unique bounded extensions to $\ceor$ and to $\lrr$. Now let $u\in\mathcal{C}:=C_c((0,+\infty),\R)$. Then there exists a sequence $(u_n)_{n\in\NN}\subset\mathcal{D}$ converging uniformly to $u$. Clearly, this implies that $u_n\to u$ in $\ceor$ and in $\lrr$ as $n\to+\infty$, hence both extensions coincide on $\mathcal{C}$. Moreover, again by \Cref{lemmaoo} these extensions satisfy the estimates
\begin{equation*} 
\lVert\mathcal{F}_\infty\rVert_{\sL(C_0)}\le\bigl\lVert CA_{-1}^{-1}B\bigr\rVert_{\sL(\W)}
\quad\text{and}\quad
\lVert\mathcal{F}_\infty\rVert_{\sL(\rL^1)}\le\bigl\lVert CA_{-1}^{-1}B\bigr\rVert_{\sL(\V)}.
\end{equation*}
By \Cref{RT} this implies that 
\[\F:\mathcal{C}\subset\lprr\to\lrr\cap\ceor\subset\lprr\]
possesses a unique bounded extension $\F\in\sL(\lprr)$.  The estimate \eqref{foon3} then follows from \Cref{lemmaoo}.(i) and (ii)
by applying again \Cref{RT} to $\F^n$.
\end{proof}

After having defined the operator $\F\in\sL(\lprr)$, we now investigate the invertibility of $Id-\F$ and its Laplace transform. 

\begin{lemma}\label{lt-p2}
Let $(A,B,C)$ be compatible, $B:\R\to\Xmo$ and $C:Z\to\R$ be positive, and assume that $\sr(CA_{-1}^{-1}B)<1$. Then for any $\lambda>0$ one has $1\in\rho(\F)\cap\rho(CR(\lambda,A_{-1})B)$ and
\begin{equation}\label{lap}
\mathscr{L}\big((Id-\mathcal{F}_\infty)^{-1}u\big)(\lambda)=\big(Id-CR(\lambda,A_{-1})B\big)^{-1}\cdot\hat{u}(\lambda)
\end{equation}
for all $u\in\lprr$, $1\le p<\infty$.
\end{lemma}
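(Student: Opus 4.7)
The plan is to follow the pattern of \Cref{ltam,ltal}: first establish that the spectral radii of $\F$ and of $CR(\lambda,A_{-1})B$ are both strictly less than one, then derive the Laplace transform identity from the Neumann expansion of $(Id-\F)^{-1}$. As always one may assume $\omega_0(A)<0$.

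The claim $1\in\rho(CR(\lambda,A_{-1})B)$ is obtained exactly as in \Cref{ltam}: by \Cref{L:B-pos} we have $R(\lambda,A_{-1})B\ge0$ for $\lambda>\spb(A)$, and the resolvent identity combined with $\spb(A)\le\omega_0(A)<0$ yields the pointwise domination
\[
0\le CR(\lambda,A_{-1})B\le -CA_{-1}^{-1}B\quad\text{for all }\lambda>0.
\]
The domination property of the spectral radius (\Cref{spr}) together with the hypothesis $\sr(CA_{-1}^{-1}B)<1$ then gives $\sr(CR(\lambda,A_{-1})B)<1$, hence $1\in\rho(CR(\lambda,A_{-1})B)$.

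For $1\in\rho(\F)$ I would use the interpolation estimate \eqref{foon3}. Taking $n$-th roots and letting $n\to+\infty$, and recalling that the spectral radius of an operator on the finite-dimensional space $\R$ is independent of the (equivalent) norm chosen, one finds
\[
\sr(\F)\le\sr(CA_{-1}^{-1}B)^{1/p}\cdot\sr(CA_{-1}^{-1}B)^{1-1/p}=\sr(CA_{-1}^{-1}B)<1.
\]
Hence $Id-\F$ is invertible in $\sL(\lprr)$ with Neumann expansion $(Id-\F)^{-1}=\sum_{n\ge0}\F^n$ converging in operator norm. To establish the Laplace identity \eqref{lap}, I would verify by induction on $n$ that
\[
\sL(\F^n u)(\lambda)=\bigl(CR(\lambda,A_{-1})B\bigr)^n\cdot\hat u(\lambda)\quad\text{for all }u\in\sD,\ \lambda>0,\ n\in\NN,
\]
using the convolution theorem as in the proof of \Cref{ltam}. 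Summing the Neumann series in $\lprr$ and applying the Laplace transform, which is a continuous linear functional on $\lprr$ at each fixed $\lambda>0$ by H\"older's inequality, then gives \eqref{lap} first on $\sD$ and then, by density and continuity of both sides, on all of $\lprr$.

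The main technical obstacle is the induction step for $\sL(\F^n u)(\lambda)$: unlike in \Cref{ltam,ltal}, here $C$ is not bounded on all of $X$, so one cannot simply pull $C$ out of the convolution integral in a term of the form $C\int_0^t T_{-1}(t-s)Bu(s)\,ds$. To circumvent this I would either invoke \cite[Lem.13]{ABE:14} (as is done in the proof of \Cref{ltal}) or exploit the explicit representation \eqref{daz} of $(\F u)(t)$ valid for $u\in\sD$: the three terms on the right-hand side of \eqref{daz} involve only the compatibility-bounded operators $CA_{-1}^{-1}B$, $CA_{-1}^{-2}B$ and $CA^{-1}$, on which the Laplace transform and the convolution structure commute without difficulty, and the inductive identity then follows by a standard manipulation.
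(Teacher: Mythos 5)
Your proposal is correct and follows essentially the same route as the paper: the bound $\sr(CR(\lambda,A_{-1})B)<1$ via domination by $-CA_{-1}^{-1}B$ and \Cref{spr}, the estimate $\sr(\F)\le\sr(CA_{-1}^{-1}B)<1$ from the interpolation bound \eqref{foon3} together with Gelfand's formula and the norm-independence of the spectral radius on $\R$, and the Laplace identity \eqref{lap} by invoking \cite[Lem.13]{ABE:14}, which is exactly what the paper does. Your alternative sketch via \eqref{daz} is a reasonable fallback but is not needed.
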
 

\begin{proof}
By the same reasoning used to show \eqref{spes} it follows that $\sr(CR(\lambda,A_{-1})B)<1$ for all $\lambda>0$, 
hence $Id-C R(\lambda,A_{-1})B:\R\to\R$ is invertible for any $\lambda>0$. Since by assumption
\[
\sr(CA_{-1}^{-1}B)=\lim_{n\to+\infty}\|(CA_{-1}^{-1}B)^n\|_{\sL(\V)}^{1/n}=\lim_{n\to+\infty}\|(CA_{-1}^{-1}B)^n\|_{\sL(\W)}^{1/n}<1,
\]
\eqref{foon3} implies that $\F\in\sL(\lprr)$ satisfies 
\[
\sr(\F)=\lim_{n\to+\infty}\bigl\lVert\F^{n}\bigr\rVert_{\sL(\rL^p)}^{1/n}<1.
\]
Hence, the operator $Id-\mathcal{F}_\infty:\lprr\to\lprr$ is invertible.
The identity \eqref{lap} then follows from \cite[Lem.13]{ABE:14}.
\end{proof}

We have now all the necessary tools to prove the main result of this section.
\begin{proof}[Proof of \Cref{PT}]
We define an operator family $(S(t))_{t\ge0}\subset\sL(X)$ and prove that it is a  $C_0$-semigroups with generator $A_{BC}$ given by \eqref{abc}.

By \Cref{lt-p2}, $Id-\mathcal{F}_\infty$ is invertible, and therefore we can define
\begin{equation}\label{semigg}
S(t):=T(t)+\mathcal{B}_t(Id-\mathcal{F}_\infty)^{-1}\mathcal{C}_\infty\in\sL(X)
\quad\text{for any $t\ge0$.}
\end{equation}
From \Cref{toop} it follows that this operator family is strongly continuous on $X$ and uniformly bounded. 
The same computations as in \cite[Proof of Thm.10]{ABE:14} then show that
\begin{equation*}
\mathscr{L}\bigl(S(\cdot)x\bigr)(\lambda)=R(\lambda,A_{BC})x\quad\text{for all $x\in X$ and $\lambda>0$.}
\end{equation*}
By \cite[Thm.3.1.7]{ABHN:11}, this implies that the operator family \eqref{semigg} defines a positive  $C_0$-semigroup on $X$ with generator $A_{BC}$. Finally, the variation of parameters \eqref{VP} follows as in \cite[Thm.10]{ABE:14}.
\end{proof}

\subsection{Structured Perturbation via Domination}\label{PvD}

In this subsection we relax the positivity assumptions on the operators $B:U\to\Xmo$ and $C:Z\to U$ and only suppose that they are dominated by positive operators $\widetilde{B}:U\to\Xmo$ and $\widetilde{C}:Z\to U$ such that $A_{\widetilde{B}\widetilde{C}}$ is a generator. More precisely, we impose the following conditions. Note that in this subsection we do not need $X$ or $U$ to be an AM- or AL-space. 

\begin{assumption}\label[assumption]{MA-dom} In the situation of \Cref{MA}, assume that there exist positive operators $B_+,\,B_-:U\to\Xmo$ and $\widetilde{C}:Z\to U$ such that
\begin{enumerate}[label=(\roman*)]
	\item $|Cx|\le\widetilde{C}x$ for all $x\in Z_+=Z\cap X_+$,
	\item $B=B_+-B_-$, i.e., $B$ is regular.
\end{enumerate}
\end{assumption} 

Then the following holds where we put $\widetilde{B}:=B_++B_-$.

\begin{theorem}\label{thm:Dom}
Let \Cref{MA,MA-dom} be satisfied and suppose that the triples $(A,B,C)$ and $(A,\widetilde{B},\widetilde{C})$ are compatible. If 
$A_{\widetilde{B}\widetilde{C}}$ generates a positive $C_0$-semigroup  $(\widetilde{S}(t))_{t\ge0}$ on $X$
and 
$\sr(\widetilde{C}R(\lambda,A_{-1})\widetilde{B})<1$ for some $\lambda>\omega_0(A)$, then the operator
\begin{equation*} 
	\begin{aligned}
\hspace{3cm} A_{BC}&:=\bigl(A_{-1}+BC\bigr)|_{X},\\
D(A_{BC})&:=\bigl\{x\in Z: A_{-1}x+BCx\in X\bigr\}
 \end{aligned}
 \end{equation*}
generates a  $C_0$-semigroup $(S(t))_{t\ge0}$ satisfying $|S(t)x|\le\widetilde{S}(t)x$ for all $t\ge0$, $x\ge0$. 
\end{theorem}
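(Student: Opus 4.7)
The plan is to reduce the result to the Weiss--Staffans perturbation theorem \cite[Thm.10]{ABE:14} applied to $(A,B,C)$, by transferring admissibility and the invertibility of $Id-\F_\tau$ from the positive dominating triple $(A,\widetilde{B},\widetilde{C})$. The semigroup estimate $|S(t)x|\le\widetilde{S}(t)x$ will then follow from termwise domination of resolvents combined with the Euler exponential formula.

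The first step is to establish the pointwise modulus inequalities $|Bu|\le\widetilde{B}|u|$ for $u\in U$, and $|Cx|\le\widetilde{C}|x|$ whenever $x$ together with $x^\pm$ lies in $Z$. The former follows directly from $B=B_+-B_-$ and $\widetilde{B}=B_++B_-$, and the latter from decomposing $x=x^+-x^-$ and applying \Cref{MA-dom}(i). Since by assumption $A_{\widetilde{B}\widetilde{C}}$ generates a $C_0$-semigroup, Weiss--Staffans theory gives $p$-admissibility of $\widetilde{B}$ and $\widetilde{C}$ for some $1\le p<\infty$. The modulus inequalities together with positivity of $(T_{-1}(t))_{t\ge0}$ yield
\begin{equation*}
\bigl|CT(s)x\bigr|\le\widetilde{C}T(s)|x|,\qquad \left|\int_0^t T_{-1}(t-s)Bu(s)\,ds\right|\le\int_0^t T_{-1}(t-s)\widetilde{B}|u(s)|\,ds,
\end{equation*}
which transfer $p$-admissibility to $C$ and $B$ respectively.

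Next I would verify the spectral hypothesis of \cite[Thm.10]{ABE:14}. Positivity of $R(\lambda,A_{-1})$ together with the modulus estimates gives $|CR(\lambda,A_{-1})Bu|\le\widetilde{C}R(\lambda,A_{-1})\widetilde{B}|u|$ for $u\in U$, so by \Cref{spr}
\begin{equation*}
\sr\bigl(CR(\lambda,A_{-1})B\bigr)\le\sr\bigl(\widetilde{C}R(\lambda,A_{-1})\widetilde{B}\bigr)<1.
\end{equation*}
Adapting the proofs of \Cref{lemmaoo,lt-p1,lt-p2} by replacing positivity-based estimates throughout with modulus-domination estimates $|\F u|\le\widetilde{\F}|u|$ yields $\sr(\F)\le\sr(\widetilde{\F})<1$ on $\lprr$, and consequently $1\in\rho(\F_\tau)$ for all $\tau>0$. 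The Weiss--Staffans theorem then applies and gives that $A_{BC}$ generates a $C_0$-semigroup $(S(t))_{t\ge0}$ on $X$.

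For the final pointwise estimate, I would use the closed-form resolvent representation
\begin{equation*}
R(\lambda,A_{BC})=R(\lambda,A)+R(\lambda,A_{-1})B\bigl(Id-CR(\lambda,A_{-1})B\bigr)^{-1}CR(\lambda,A)
\end{equation*}
and its analog for the dominating triple. Expanding the middle factor as a Neumann series and comparing term by term via the modulus estimates gives $|R(\lambda,A_{BC})x|\le R(\lambda,A_{\widetilde{B}\widetilde{C}})x$ for $x\in X_+$ and $\lambda$ large enough. Iterating this inequality and invoking the Euler formula $S(t)x=\lim_n (nt^{-1}R(nt^{-1},A_{BC}))^n x$ (and its analog for $\widetilde{S}$) yields $|S(t)x|\le\widetilde{S}(t)x$ for all $t\ge0$, $x\in X_+$. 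The main technical obstacle is precisely the adaptation of \Cref{lemmaoo,lt-p1,lt-p2}, whose proofs rely heavily on positivity of $\F$, to the signed operator $\F$ considered here; systematically replacing each positivity-based step with the corresponding modulus inequality against $\widetilde{\F}$ should however carry through, since the dominating operator $\widetilde{\F}$ satisfies all the required bounds by construction.
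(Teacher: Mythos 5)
Your overall strategy of dominating everything by the tilde triple is the right instinct, and your resolvent comparison $|R(\lambda,A_{BC})x|\le R(\lambda,A_{\widetilde B\widetilde C})x$ for $x\ge0$ together with an inversion formula (Euler or Post--Widder) is exactly how the paper obtains the final estimate $|S(t)x|\le\widetilde S(t)x$. However, the route you take to generation has a genuine gap: you assert that because $A_{\widetilde B\widetilde C}$ generates a $C_0$-semigroup, ``Weiss--Staffans theory gives $p$-admissibility of $\widetilde B$ and $\widetilde C$ for some $p$.'' This is not a valid deduction. Generation of the perturbed operator is the \emph{conclusion} of the Weiss--Staffans theorem, not a characterization; there is no converse that extracts admissibility of the individual control and observation operators from the mere fact that $A_{\widetilde B\widetilde C}$ is a generator. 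The theorem being proved deliberately assumes no admissibility whatsoever, so your reduction to \cite[Thm.10]{ABE:14} starts from hypotheses you do not have. A second, related problem is your plan to ``adapt \Cref{lemmaoo,lt-p1,lt-p2}'': those lemmas are proved only for $U=\R$ and lean on its simultaneous AM/AL structure (via the Riesz--Thorin interpolation in \Cref{RT}), whereas in \Cref{thm:Dom} the space $U$ is an arbitrary Banach lattice — the paper explicitly notes that no AM- or AL-assumption is needed in this subsection. The adaptation therefore does not carry through in the stated generality.

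The paper's actual argument avoids systems theory entirely. From $\sr(\widetilde CR(\lambda,A_{-1})\widetilde B)<1$ and the domination $|CR(\lambda,A_{-1})Bu|\le\widetilde CR(\lambda,A_{-1})\widetilde B|u|$ (your \Cref{spr} step, which is correct) one gets $1\in\rho(CR(\lambda,A_{-1})B)$, and then the closed-form resolvent identity of \cite[Thm.2.3.(d)]{AE:18} yields $\lambda\in\rho(A_{BC})$, closedness, and — after a short limit argument with $\lambda R(\lambda,A_{BC})x\to x$ — dense definedness. Termwise Neumann-series comparison then gives $\|R(\lambda,A_{BC})^n\|\le\|R(\lambda,A_{\widetilde B\widetilde C})^n\|$ for \emph{all} $n$, so the Hille--Yosida estimates satisfied by the generator $A_{\widetilde B\widetilde C}$ transfer to $A_{BC}$, and the full Hille--Yosida theorem gives generation directly. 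If you replace your Weiss--Staffans reduction by this resolvent-power comparison, the rest of your write-up (the modulus inequalities for $B$, $C$ and the final inversion-formula step) is essentially correct.
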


We refer to \Cref{subsec:HE,subsec:uP-FD,subsec:r1p-FD} for concrete applications of this theorem.

\smallbreak
For its proof we need some preparation. As usual, we might assume in the sequel that $\omega_0(A)<0$ and $\sr(\widetilde{C}A_{-1}^{-1}\widetilde{B})<1$. 

\begin{lemma}\label{wlemma}
	If the triples $(A,B,C)$ and $(A,\widetilde{B},\widetilde{C})$ are compatible then for all $\lambda>0$ it holds
	\begin{enumerate}[label=(\roman*)]
		\item $CR(\lambda,A)\in\mathcal{L}(X,U)$ and $\lVert CR(\lambda,A)\rVert_{\mathcal{L}(X,U)}\le\lVert \widetilde{C}R(\lambda,A)\rVert_{\mathcal{L}(X,U)}$,
		\item $R(\lambda,A_{-1})B\in\mathcal{L}(U,X)$ and $\lVert R(\lambda,A_{-1})B\rVert_{\mathcal{L}(U,X)}\le\lVert R(\lambda,A_{-1})\widetilde{B}\rVert_{\mathcal{L}(U,X)}$,
		\item $CR(\lambda,A_{-1})B\in\mathcal{L}(U)$ and $\lVert (CR(\lambda,A_{-1})B)\rVert_{\mathcal{L}(U)}\le\lVert (CR(\lambda,A_{-1})B)\rVert_{\mathcal{L}(U)}$,
		\item $\sr(CR(\lambda,A_{-1})B)\le \sr(\widetilde{C}R(\lambda,A)\widetilde{B})\le \sr(\widetilde{C}A_{-1}^{-1}\widetilde{B})$.
	\end{enumerate}
\end{lemma}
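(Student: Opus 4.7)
The plan is to prove all four assertions via a single pointwise domination principle and then extract the norm and spectral-radius estimates. Throughout I use the decomposition $B_\pm=\tfrac12(\widetilde B\pm B)$, so that $B_+,B_-\ge0$ in $\sL(U,\Xmo)$ and $\widetilde B=B_++B_-$. Combined with compatibility of both triples this yields that $R(\lambda,A_{-1})B_\pm:U\to Z\subseteq X$ are bounded positive operators for every $\lambda\in\rho(A)$, in particular for $\lambda>0$ (after the usual rescaling to $\omega_0(A)<0$).

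\smallbreak
For (i), I would write an arbitrary $x\in X$ as $x=x^+-x^-$. Since $R(\lambda,A)\ge0$ maps $X$ into $D(A)\subseteq Z$, both $R(\lambda,A)x^\pm$ lie in $Z_+$, so \Cref{MA-dom}\,(i) applied termwise gives the pointwise domination
\[
\bigl|CR(\lambda,A)x\bigr|\le\widetilde{C}R(\lambda,A)x^++\widetilde{C}R(\lambda,A)x^-=\widetilde{C}R(\lambda,A)|x|.
\]
Taking $U$-norms delivers (i); boundedness of $\widetilde{C}R(\lambda,A):X\to U$ is automatic since this operator is everywhere defined and positive between Banach lattices.

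\smallbreak
For (ii), the standard Banach-lattice inequality $|R(\lambda,A_{-1})B_\pm u|\le R(\lambda,A_{-1})B_\pm|u|$ (valid because each $R(\lambda,A_{-1})B_\pm$ is a positive operator from $U$ into $X$) immediately yields
\[
\bigl|R(\lambda,A_{-1})Bu\bigr|\le R(\lambda,A_{-1})B_+|u|+R(\lambda,A_{-1})B_-|u|=R(\lambda,A_{-1})\widetilde B\,|u|,
\]
and taking $X$-norms gives (ii). For (iii) I combine the two previous ideas: splitting $u=u^+-u^-$, each of the four terms $R(\lambda,A_{-1})B_\pm u^\pm$ lies in $Z_+$, so \Cref{MA-dom}\,(i) applied termwise together with positivity of $\widetilde C$ produces
\[
\bigl|CR(\lambda,A_{-1})Bu\bigr|\le\widetilde C R(\lambda,A_{-1})\widetilde B\,|u|,
\]
which implies the operator-norm bound. (As printed, the right-hand side of (iii) appears to be a typo: it should read $\|\widetilde C R(\lambda,A_{-1})\widetilde B\|_{\sL(U)}$.)

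\smallbreak
For (iv), iterating the estimate from (iii) by a straightforward induction gives $|(CR(\lambda,A_{-1})B)^nu|\le(\widetilde C R(\lambda,A_{-1})\widetilde B)^n|u|$ and hence $\|(CR(\lambda,A_{-1})B)^n\|\le\|(\widetilde C R(\lambda,A_{-1})\widetilde B)^n\|$; Gelfand's formula then yields the first inequality. For the second, the resolvent identity together with positivity of $R(\lambda,A_{-1})\widetilde B$ and $(T(t))_{t\ge0}$ provides, exactly as in the derivation of \eqref{spes}, the domination $0\le\widetilde C R(\lambda,A_{-1})\widetilde B\le -\widetilde C A_{-1}^{-1}\widetilde B$ for $\lambda>0$, and monotonicity of the spectral radius on positive operators (\Cref{spr}) concludes. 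The only real point of care — and the main bookkeeping obstacle — is that $\Xmo$ is not a Banach lattice, so every order manipulation must be routed back into $X$ or $Z$ via the resolvent before it becomes meaningful; this is precisely what the decomposition $B_\pm=\tfrac12(\widetilde B\pm B)$ together with the joint compatibility of $(A,B,C)$ and $(A,\widetilde B,\widetilde C)$ accomplishes.
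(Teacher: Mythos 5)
Your proof is correct and follows essentially the same route as the paper's: pointwise domination obtained by splitting into positive and negative parts, positivity of $R(\lambda,A_{-1})B_\pm$ via \Cref{L:B-pos}, the resolvent identity to pass to $-\widetilde{C}A_{-1}^{-1}\widetilde{B}$, and monotonicity of the spectral radius from \Cref{spr} (your $B_\pm=\tfrac12(\widetilde{B}\pm B)$ are exactly the paper's $B_\pm$). Your observation that the right-hand side of (iii) as printed is a typo --- it should read $\lVert\widetilde{C}R(\lambda,A_{-1})\widetilde{B}\rVert_{\sL(U)}$ --- is also accurate.
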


\begin{proof}
	(i). Since $R(\lambda,A)$ is positive for $\lambda>0$, using \Cref{MA-dom}.(i) we obtain for all $x\in X_+$ and $\lambda>0$
	\begin{equation*}
		\bigl|CR(\lambda,A)x\bigr|\le\widetilde{C}R(\lambda,A)x.
	\end{equation*}
Hence, for all $x\in X$ and $\lambda>0$ we have
\begin{equation}\label{eq:CRlA}
\begin{aligned}
0\le\bigl|CR(\lambda,A)x\bigr|
&\le\bigl|CR(\lambda,A)x^+\bigr|+\bigl|CR(\lambda,A)x^-\bigr|\\
&\le\widetilde{C}R(\lambda,A)x^++\widetilde{C}R(\lambda,A)x^-
=\widetilde{C}R(\lambda,A)|x|.
\end{aligned}
\end{equation}
Applying the norm and taking the supremum in the unit ball of $U$ gives (i).

\smallbreak
(ii). By \Cref{MA-dom}.(ii), $B_\pm$ are positive linear operators and satisfy $B=B_+-B_-$ and $\widetilde{B}=B_++B_-$. Moreover,  the triples $(A,B_{\pm},C)$ and $(A,B_{\pm},\widetilde{C})$ are compatible. Hence, for all $u\in U$ and $\lambda>0$ using \Cref{L:B-pos} we conclude
\begin{equation*} 
\begin{aligned}
0\le\bigl|R(\lambda,A_{-1})Bu\bigr|
&=\bigl|R(\lambda,A_{-1})\left(B_+-B_-\right)u\bigr|\\
&\le\bigl|R(\lambda,A_{-1})B_+u\bigr|+\bigl|R(\lambda,A_{-1})B_-u\bigr|\\
&\le\bigl|R(\lambda,A_{-1})B_+u^+\bigr|+\bigl|R(\lambda,A_{-1})B_+u^-\bigr|\\
&\quad+\bigl|R(\lambda,A_{-1})B_-u^+\bigr|+\bigl|R(\lambda,A_{-1})B_-u^-\bigr|\\
&\le R(\lambda,A_{-1})\bigl(B_+|u|+B_-|u|\bigr)=R(\lambda,A_{-1})\widetilde{B}|u|.
\end{aligned}
\end{equation*}
Applying the norm and taking the supremum in the unit ball of $U$ gives (ii).
	
\smallbreak
(iii)--(iv). Using \Cref{MA-dom}, \Cref{L:B-pos} and the resolvent identity we obtain for all $u\in U$ and $\lambda>0$
\begin{equation}\label{crbu0}
\begin{aligned}
0\le \bigl|CR(\lambda,A_{-1})Bu|
&=\bigl|CR(\lambda,A_{-1})\bigl(B_+u-B_-u\bigr)\bigr|\\
&\le\bigl|CR(\lambda,A_{-1})B_+u\bigr|+\bigl|CR(\lambda,A_{-1})B_-u\bigr|\\
&\le\bigl|CR(\lambda,A_{-1})B_+u^+\bigr|+\bigl|CR(\lambda,A_{-1})B_+u^-\bigr|\\
&\quad+\bigl|CR(\lambda,A_{-1})B_-u^+\bigr|+\bigl|CR(\lambda,A_{-1})B_-u^-\bigr|\\
&\le\widetilde{C}R(\lambda,A_{-1})\bigl(B_+u^++B_+u^-+B_-u^++B_-u^-\bigr)\\
&=\widetilde{C}R(\lambda,A_{-1})\bigl(B_+|u|+B_-|u|\bigr)=\widetilde{C}R(\lambda,A_{-1})\widetilde{B}|u|\\
&=\widetilde{C}R(0,A_{-1})\widetilde{B}|u|-\widetilde{C}\lambda R(\lambda,A)R(0,A_{-1})\widetilde{B}\\
&\le  -\widetilde{C}A_{-1}^{-1}\widetilde{B}|u|.
\end{aligned}
\end{equation}
Assertions (iii) and (iv) then follow from \eqref{crbu0} by  \Cref{spr}.
\end{proof}

\begin{lemma}\label{L:ABC-cdd} 
Let $A_{\widetilde{B}\widetilde{C}}$ generate a positive semigroup on $X$. If $\sr(\widetilde{C}R(\lambda,A_{-1}\widetilde{B})<1$ for some $\lambda>0$ then  
\begin{enumerate}[label=(\roman*)]
\item $A_{BC}$ is closed and densely defined,
\item $\lambda\in\rho(A_{BC})\cap\rho(A_{\widetilde{B}\widetilde{C}})$,
\item $\|R(\lambda,A_{BC})^n\|_{\sL(X)}\le\|R(\lambda,A_{\widetilde{B}\widetilde{C}})^n\|_{\sL(X)}$ for all $n\in\NN$.
\end{enumerate}
\end{lemma}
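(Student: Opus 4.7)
By the rescaling argument \cite[Sect.II.2.2]{EN:00} I may assume throughout that $\omega_0(A)<0$, so that $\widetilde{C}R(\mu,A_{-1})\widetilde{B}\in\sL(U)$ is positive and monotonically decreasing in $\mu$ on $(0,+\infty)$ (by the resolvent identity). Combined with \Cref{spr} and \Cref{wlemma}(iv), this propagates the hypothesis to
\begin{equation*}
\sr\bigl(CR(\mu,A_{-1})B\bigr)\le\sr\bigl(\widetilde{C}R(\mu,A_{-1})\widetilde{B}\bigr)\le\sr\bigl(\widetilde{C}R(\lambda,A_{-1})\widetilde{B}\bigr)<1
\end{equation*}
for every $\mu\ge\lambda$, so that both $Id-CR(\mu,A_{-1})B$ and $Id-\widetilde{C}R(\mu,A_{-1})\widetilde{B}$ are invertible in $\sL(U)$ via their Neumann series.

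To obtain (ii) and (iii) simultaneously I would set
\begin{equation*}
R_\mu^{BC}:=R(\mu,A)+R(\mu,A_{-1})B\bigl(Id-CR(\mu,A_{-1})B\bigr)^{-1}CR(\mu,A)\in\sL(X)
\end{equation*}
and define $R_\mu^{\widetilde{B}\widetilde{C}}$ analogously. Compatibility of the two triples ensures that the middle factor lands in $Z$, and the Weiss--Staffans computation of \cite[Lem.13]{ABE:14} shows $R_\mu^{BC}X\subseteq D(A_{BC})$ with $R_\mu^{BC}$ a two-sided inverse of $\mu-A_{BC}$; the same argument applies to the tilded operator. Specialising $\mu=\lambda$ gives $\lambda\in\rho(A_{BC})\cap\rho(A_{\widetilde{B}\widetilde{C}})$, i.e. (ii). For (iii), I would expand the middle factor as a Neumann series and estimate termwise: combining $|R(\mu,A)x|\le R(\mu,A)|x|$ with the absolute-value chain used in \eqref{crbu0}, induction on $k$ yields
\begin{equation*}
\bigl|\bigl(CR(\mu,A_{-1})B\bigr)^k CR(\mu,A)x\bigr|\le\bigl(\widetilde{C}R(\mu,A_{-1})\widetilde{B}\bigr)^k\widetilde{C}R(\mu,A)|x|,
\end{equation*}
which after summation becomes $|R(\mu,A_{BC})x|\le R(\mu,A_{\widetilde{B}\widetilde{C}})|x|$. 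Iterating this pointwise inequality delivers $|R(\mu,A_{BC})^n x|\le R(\mu,A_{\widetilde{B}\widetilde{C}})^n|x|$ for every $n\in\NN$, and the Banach-lattice property $|y|\le z\Rightarrow\|y\|\le\|z\|$ gives~(iii).

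Closedness of $A_{BC}$ in (i) is immediate from $\lambda\in\rho(A_{BC})$. For density I would verify $\mu R(\mu,A_{BC})x\to x$ in $X$ as $\mu\to+\infty$: splitting
\begin{equation*}
\mu R(\mu,A_{BC})x-x=\bigl[\mu R(\mu,A)x-x\bigr]+\mu\bigl[R(\mu,A_{BC})-R(\mu,A)\bigr]x,
\end{equation*}
the first bracket vanishes in norm since $A$ is a generator, while the absolute value of the second term is dominated by the non-negative quantity $\mu R(\mu,A_{\widetilde{B}\widetilde{C}})|x|-\mu R(\mu,A)|x|$, which tends to $0$ in norm because both $\mu R(\mu,A_{\widetilde{B}\widetilde{C}})|x|\to|x|$ and $\mu R(\mu,A)|x|\to|x|$ ($A_{\widetilde{B}\widetilde{C}}$ and $A$ being generators). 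As $\mu R(\mu,A_{BC})x\in D(A_{BC})$ for every $\mu\ge\lambda$, density follows. I expect the main technical hurdle to be the resolvent-identity verification in~(ii): showing $R_\mu^{BC}X\subseteq D(A_{BC})$ and that it genuinely inverts $\mu-A_{BC}$ on both sides rests on the extrapolation-space identity $A_{-1}R(\mu,A_{-1})Bv=\mu R(\mu,A_{-1})Bv-Bv$ together with the compatibility of $(A,B,C)$; all remaining steps reduce to bookkeeping with positivity and Banach-lattice norms.
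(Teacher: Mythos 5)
Your proposal is correct and follows essentially the same route as the paper: invertibility of $Id-CR(\lambda,A_{-1})B$ via \Cref{wlemma}.(iv), the explicit resolvent formula for $A_{BC}$ (which the paper imports from \cite[Thm.2.3.(d)]{AE:18} rather than re-deriving it; \cite[Lem.13]{ABE:14} is about Laplace transforms and is not quite the right reference here), termwise Neumann-series domination giving $|R(\lambda,A_{BC})x|\le R(\lambda,A_{\widetilde{B}\widetilde{C}})|x|$ and hence (iii) via \Cref{spr}, and density from $\mu R(\mu,A_{BC})x\to x$ by dominating the correction term by its tilded counterpart. Your propagation of the spectral condition to all $\mu\ge\lambda$ via resolvent monotonicity is only a cosmetic variant of what the paper gets from \Cref{wlemma}.(iv).
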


\begin{proof}
By \Cref{wlemma}.(iv) the hypothesis $\sr(\widetilde{C}R(\lambda,A_{-1}\widetilde{B})<1$ implies that $1\in\rho(CR(\lambda,A_{-1}B))\cap\rho(\widetilde{C}R(\lambda,A_{-1}\widetilde{B}))$. Hence, by \cite[Thm.2.3.(d)]{AE:18} we conclude $\lambda\in\rho(A_{BC})\cap\rho(A_{\widetilde{B}\widetilde{C}})$ and
\begin{align}
R(\lambda,A_{BC})&=R(\lambda,A)+R(\lambda,A_{-1})B\bigl(Id-CR(\lambda,A_{-1})B\bigr)^{-1}CR(\lambda,A),\label{eq:RlABC1}\\
R(\lambda,A_{\widetilde{B}\widetilde{C}})&=R(\lambda,A)+R(\lambda,A_{-1})\widetilde{B}\bigl(Id-\widetilde{C}R(\lambda,A_{-1})\widetilde{B}\bigr)^{-1}\widetilde{C}R(\lambda,A)\label{eq:RlABC2},
\end{align}
which proves (ii). Moreover, this implies that $A_{BC}$ is closed. To show that $D(A_{BC})$ is dense in $X$ we fix $x\in X$. Then $\lambda R(\lambda,A)x\to x$ and $\lambda R(\lambda,A_{\widetilde{B}\widetilde{C}})x\to x$ in $X$ as $\lambda\to+\infty$. From \eqref{eq:RlABC2} it follows
\[\lambda R(\lambda,A_{-1})\widetilde{B}\bigl(Id-\widetilde{C}R(\lambda,A_{-1})\widetilde{B}\bigr)^{-1}\widetilde{C}R(\lambda,A)x\to 0\quad\text{as}\,\,\lambda\to+\infty.\]
Using \eqref{eq:CRlA}--\eqref{crbu0} and \Cref{spr}.(i) this yields for arbitrary $x\in X$
\begin{align}\label{eq:RABC}
0\le\bigl|\lambda R(\lambda,A_{-1})B\bigl(Id&-CR(\lambda,A_{-1})B\bigr)^{-1}CR(\lambda,A)x\bigr|\notag\\
&=\biggl|\lambda R(\lambda,A_{-1})B\cdot\sum_{n=0}^\infty \bigl(CR(\lambda,A_{-1})B\bigr)^n\cdot CR(\lambda,A)x\biggr|\notag\\
&\le\lambda R(\lambda,A_{-1})\widetilde{B}\cdot\sum_{n=0}^\infty \bigl(\widetilde{C}R(\lambda,A_{-1})\widetilde{B}\bigr)^n\cdot \widetilde{C}R(\lambda,A)|x|\notag\\
&=\lambda R(\lambda,A_{-1})\widetilde{B}\bigl(Id-\widetilde{C}R(\lambda,A_{-1})\widetilde{B}\bigr)^{-1}\widetilde{C}R(\lambda,A)|x|\\
&\to 0\quad\text{as $\lambda\to+\infty$}\notag.
\end{align}
Hence, from \eqref{eq:RlABC1} it follows $D(A_{BC})\ni\lambda R(\lambda,A_{BC})x\to x$ as $\lambda\to+\infty$. Since $x\in X$ was arbitrary, this shows $A_{BC}$ is densely defined as claimed. It only remains to show (iii). Using \eqref{eq:RlABC1} and \eqref{eq:RlABC2}, the same reasoning as in \eqref{eq:RABC} implies
\begin{equation}\label{eq:dom-Rla}
\bigl|R(\lambda,A_{BC})x\bigr|\le R(\lambda,A_{\widetilde{B}\widetilde{C}})x
\end{equation}
for all $x\in X_+$. By \Cref{spr}.(ii) this proves (iii).
\end{proof}

\begin{proof}[Proof of \Cref{thm:Dom}]
Without loss of generality we may assume $\omega_0(A)<0$ and $\sr(\widetilde{C}A_{-1}^{-1}\widetilde{B})<1$. 
Then by \Cref{L:ABC-cdd}.(i) the operator $A_{BC}$ is closed and densely defined. Moreover, by \Cref{wlemma}.(iv) and \Cref{L:ABC-cdd}.(ii) we know that $\RR_+\subset\rho(A_{BC})\cap\rho(A_{\widetilde{B}\widetilde{C}})$. Since by assumption $A_{\widetilde{B}\widetilde{C}}$ is a generator it verifies the Hille--Yosida estimates for the $n$-th powers of its resolvent. By \Cref{L:ABC-cdd}.(iii) the same is true for $A_{BC}$ which implies that it is a generator as well. Finally, the domination $|S(t)x|\le\widetilde{S}(t)x$ for all $t\ge0$, $x\ge0$ follows from \eqref{eq:dom-Rla} by the Post--Widder inversion formula, cf.\ \cite[Cor.III.5.5]{EN:00}.
\end{proof}

\section{Applications}\label{sec:App}
\subsection{Abstract Domain Perturbation of Generators}
As we saw in \Cref{sec:intro}, operators of the form ``$G=A+BC\,$'' naturally arise by closing a linear system. Next we show that also boundary perturbations in the sense of Greiner, cf. \cite{Gre:87}, can be written in this form. 
More precisely, we consider the following abstract framework.

\smallbreak
We start with a Banach space $X$ and a  linear ``maximal operator''\footnote{``Maximal'' in the sense of a ``big'' domain, e.g., a differential operator without boundary conditions.} $A_m:D(A_m)\subseteq X\to X$.
In order to single out a restriction $A$ of $A_m$ we take a Banach spaces $\partial X$, called ``boundary space'', and a linear ``boundary (or trace) operator''
$L:D(A_m)\to\partial X$
and define the operator $A$ with ``abstract Dirichlet boundary conditions'' by
\begin{equation}\label{eq:def-A-kerK}
A\subset A_m,\quad D(A):=\ker(L)=\bigl\{x\in D(A_m): Lx=0\bigr\}.
\end{equation}
Next we perturb the domain of $A$ in the following way.
For a Banach space $Z$ satisfying $D(A_m)\subseteq Z$ and $X_1\hookrightarrow Z\hookrightarrow X$ with continuous injections, and an operator $\Phi\in\sL(Z,\partial X)$ we define%
\begin{equation}\label{eq:G-one-dim}
A^\Phi\subset A_m,\quad D(A^\Phi):=\ker(L-\Phi)=\bigl\{x\in D(A_m): Lx=\Phi x\bigr\}.
\end{equation}
Hence, $A^\Phi$ can be considered as domain perturbation of the operator $A$.
We note that in \cite{Gre:87} the operator $\Phi:X\to\partial X$ has to be bounded.
As we will see this example fits into our framework also for unbounded operators $\Phi$. To proceed we need the following result. For a proof we refer to \cite[Sect.3]{AE:18}.

\begin{lemma}
\label{lem:Dir-op-ex}
Assume that $A:D(A)\subseteq X\to X$ given in \eqref{eq:def-A-kerK} has non-empty resolvent set $\rho(A)$. Moreover, suppose that
\begin{enumerate}[label=(\roman*)]
\item $A_m$ or $\binom{A_m}{L}$ is closed and  $L:Z\to\partial X$  is surjective, or
\item for some $\mu\in\mathbb{C}$ the restriction
\[
L|_{\ker(\mu-A_m)}:\ker(\mu-A_m)\to\partial X
\]
is invertible with bounded inverse
\begin{equation}\label{eq:Dir-op}
L_\mu:=\bigl(L|_{\ker(\mu-A_m)}\bigr)^{-1}\in
\sL(\partial X,X).
\end{equation}
\end{enumerate}
Then for all $\lambda\in\rho(A)$
\[
L|_{\ker(\lambda-A_m)}:\ker(\lambda-A_m)\to\partial X
\]
is invertible with bounded inverse given by
\begin{equation}\label{eq:def-L_mu}
L_\lambda=(\mu-A)R(\lambda,A) L_\mu\in
\sL(\partial X,X).
\end{equation}

\end{lemma}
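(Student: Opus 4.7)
The plan is to reduce case (i) to case (ii) and then verify the explicit formula for $L_\lambda$ by a short calculation built on the resolvent identity $(\mu-A)R(\lambda,A)=Id-(\lambda-\mu)R(\lambda,A)$.

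First I would treat case (i) in order to produce a bounded $L_\mu$ for some $\mu\in\rho(A)$. Fix any such $\mu$. \emph{Injectivity} of $L|_{\ker(\mu-A_m)}$ is immediate: every $x\in\ker(\mu-A_m)\cap\ker L$ lies in $D(A)$ and satisfies $(\mu-A)x=0$, hence $x=0$. \emph{Surjectivity}: given $y\in\partial X$, choose a lift $\tilde x$ of $y$ under the surjection $L$ and set
\[
x:=\tilde x-R(\mu,A)(\mu-A_m)\tilde x;
\]
since $R(\mu,A)$ maps into $D(A)\subseteq\ker L$ one checks $Lx=L\tilde x=y$ and $(\mu-A_m)x=(\mu-A_m)\tilde x-(\mu-A)R(\mu,A)(\mu-A_m)\tilde x=0$. \emph{Boundedness} of the resulting inverse $L_\mu$ then follows from the open mapping theorem after equipping $D(A_m)$ with the graph norm of $A_m$, or of the joint operator $\binom{A_m}{L}$ if that one is closed: this makes $D(A_m)$ into a Banach space on which $L$ is continuous, $\ker(\mu-A_m)$ is a closed subspace on which the graph norm is equivalent to the $X$-norm (since $A_mx=\mu x$ there), and $L|_{\ker(\mu-A_m)}$ is an isomorphism of Banach spaces.

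Assuming (ii) (either directly or via the reduction above), fix $\lambda\in\rho(A)$ and define
\[
L_\lambda:=(\mu-A)R(\lambda,A)L_\mu=L_\mu-(\lambda-\mu)R(\lambda,A)L_\mu,
\]
the second equality being the resolvent identity mentioned above. The right-hand form shows at once that $L_\lambda\in\sL(\partial X,X)$ and that $L_\lambda y=L_\mu y+w_y$ with $w_y:=-(\lambda-\mu)R(\lambda,A)L_\mu y\in D(A)=\ker L$; hence $L(L_\lambda y)=L(L_\mu y)=y$. Furthermore,
\[
(\lambda-A_m)L_\lambda y=(\lambda-\mu)L_\mu y+(\mu-A_m)L_\mu y-(\lambda-A)w_y=(\lambda-\mu)L_\mu y-(\lambda-\mu)L_\mu y=0,
\]
so $L_\lambda y\in\ker(\lambda-A_m)$. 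Injectivity of $L|_{\ker(\lambda-A_m)}$ is the same one-line argument as above, with $\lambda\in\rho(A)$ in place of $\mu$. Consequently $L|_{\ker(\lambda-A_m)}$ is bijective with bounded inverse $L_\lambda$.

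The main obstacle is the reduction of case (i) to case (ii), specifically establishing the boundedness of $L_\mu$ from the abstract surjectivity and closedness hypotheses. It requires picking the correct Banach-space structure on $D(A_m)$ (the graph norm of $A_m$ if $A_m$ is closed, or that of $\binom{A_m}{L}$ if the joint operator is the one assumed closed), checking that $L$ becomes continuous on it, and that $\ker(\mu-A_m)$ is a closed subspace on which the graph norm collapses to the $X$-norm, so that the open mapping theorem can be applied. Once $L_\mu$ is in hand, the verification of the formula for $L_\lambda$ and the invertibility for every $\lambda\in\rho(A)$ are purely algebraic and rest entirely on the resolvent identity.
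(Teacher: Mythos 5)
The paper itself does not prove this lemma --- it refers to \cite[Sect.3]{AE:18} --- so there is no internal proof to compare against; your argument is the standard one (Greiner's construction of the Dirichlet operator plus the resolvent identity) and is essentially correct. The second half is airtight: the identity $(\mu-A)R(\lambda,A)=Id-(\lambda-\mu)R(\lambda,A)$, the decomposition $L_\lambda y=L_\mu y+w_y$ with $w_y\in D(A)=\ker L$, and the computation $(\lambda-A_m)L_\lambda y=0$ all check out (there is a harmless sign slip in your intermediate display, where the last summand should read $+(\lambda-A)w_y=-(\lambda-\mu)L_\mu y$, but the conclusion is right), and note that this part never needs $\mu\in\rho(A)$, consistent with hypothesis (ii).

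In the reduction of (i) to (ii), two points deserve tightening. First, your claim that on $\ker(\mu-A_m)$ ``the graph norm collapses to the $X$-norm'' is only immediate for the graph norm of $A_m$ alone; for the graph norm of $\binom{A_m}{L}$ it would amount to $\|Lx\|\le C\|x\|_X$ on the kernel, which is not given. But equivalence is not needed: it suffices that the graph norm \emph{dominates} $\|\cdot\|_X$, since then the open mapping theorem applied to the continuous bijection $L|_{\ker(\mu-A_m)}$ from the Banach space $(\ker(\mu-A_m),\|\cdot\|_{\mathrm{graph}})$ onto $\partial X$ yields a bounded inverse into that space, hence into $X$. Second, in the sub-case where only $A_m$ is closed you do need $L$ to be continuous for the graph norm of $A_m$; this is not free from the bare statement but is supplied by the paper's standing assumption that $L$ acts boundedly on $Z\supseteq D(A_m)$ (in the other sub-case it is automatic from the definition of the joint graph norm). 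With these two adjustments your proof is complete.
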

As we see next the so-called abstract \emph{Dirichlet operator}\footnote{This notion is used since $f=L_\mu x_0$ solves the abstract Dirichlet problem $(\mu-A_m)f=0$, $Lf=x_0$.}
$L_\mu$ defined in \eqref{eq:Dir-op} plays a crucial role in the context of this generic example. Note that by \eqref{eq:def-L_mu} $L_\lambda=R(\lambda,A_{-1})(\mu-A_{-1}) L_\mu$, hence
the operator
\begin{equation}\label{eq:def-LA}
L_A:=(\mu-A_{-1})L_\mu=(\lambda-A_{-1})L_\lambda\in\sL\bigl(\partial X,X_{-1}\bigr)
\end{equation}
is independent of $\mu$ and $\lambda\in\rho(A)$. Using this operator we obtain the following representation of $A^\Phi$ from \eqref{eq:G-one-dim}. For a proof we refer again to \cite[Sect.3]{AE:18}.

\begin{lemma}\label{lem:G-as-SWp}
If $L_\lambda:X\to\partial X$ exists for some $\mu\in\rho(A)$ then
\begin{equation}\label{eq:rep-Aphi}
A^\Phi=\bigl(A_{-1}+L_A\cdot\Phi\bigr)\big|_{X}.
\end{equation}
\end{lemma}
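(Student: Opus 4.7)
My plan is to reduce the claim to a single algebraic identity
\[
A_m x = A_{-1} x + L_A Lx \qquad \text{for all } x \in D(A_m),
\]
and then use it to compare the two operators term-by-term. To establish this identity I fix $\mu \in \rho(A)$ (whose existence, together with the bounded inverse $L_\mu$, is granted by \Cref{lem:Dir-op-ex}) and decompose every $x \in D(A_m)$ as $x = (x - L_\mu Lx) + L_\mu Lx$. The first summand lies in $\ker L \cap D(A_m) = D(A)$ since $LL_\mu = Id$ on $\partial X$, while the second lies in $\ker(\mu - A_m) \subseteq D(A_m)$. Applying $A_m$ gives $A_m x = A(x - L_\mu Lx) + \mu L_\mu Lx$, and applying $A_{-1}$ together with the defining relation $L_A = (\mu - A_{-1})L_\mu$ from \eqref{eq:def-LA} yields $A_{-1}x = A(x - L_\mu Lx) + \mu L_\mu Lx - L_A Lx$. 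Subtracting produces the identity.

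With this identity in hand, both inclusions become short. If $x \in D(A^\Phi)$ then $x \in D(A_m) \subseteq Z$ and $Lx = \Phi x$, whence $A^\Phi x = A_m x = A_{-1}x + L_A \Phi x \in X$, placing $x$ in the domain of $(A_{-1} + L_A\Phi)|_X$ with matching action. Conversely, if $x \in Z$ satisfies $(A_{-1} + L_A\Phi)x \in X$, I set $y := L_\mu \Phi x \in D(A_m)$, which has $Ly = \Phi x$; using $A_{-1}L_\mu = \mu L_\mu - L_A$ I compute
\[
A_{-1}(x - y) = A_{-1}x - \mu y + L_A \Phi x = (A_{-1} + L_A\Phi)x - \mu y \in X.
\]
Since the part of $A_{-1}$ in $X$ is exactly $A$, this forces $x - y \in D(A) \subseteq D(A_m)$, so $x = (x - y) + y \in D(A_m)$ and $Lx = L(x - y) + Ly = \Phi x$, placing $x \in D(A^\Phi)$; the operator equality then follows by invoking the key identity once more.

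No serious obstacle is expected: the whole argument is driven by the built-in relation $A_{-1}L_\mu = \mu L_\mu - L_A$ baked into the definition \eqref{eq:def-LA} of $L_A$. The only point demanding care is bookkeeping of which space each expression inhabits, since $A_{-1}x$ and $L_A \Phi x$ a priori belong only to the extrapolation space $\Xmo$; the cancellation that places $(A_{-1} + L_A\Phi)x$ back in $X$ is precisely what encodes the boundary condition $Lx = \Phi x$, which is why the two formulations of $D(A^\Phi)$ coincide.
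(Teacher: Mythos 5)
Your proof is correct. The paper gives no argument of its own for this lemma but defers to \cite[Sect.3]{AE:18}, and your reduction to the identity $A_m x = A_{-1}x + L_A Lx$ on $D(A_m)$ --- obtained from the splitting $x=(x-L_\mu Lx)+L_\mu Lx$ into $D(A)$ plus $\ker(\mu-A_m)$, together with $A_{-1}L_\mu=\mu L_\mu-L_A$ and the fact that the part of $A_{-1}$ in $X$ is $A$ --- is exactly the mechanism used there, so this is essentially the same approach, made self-contained.
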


Hence, $A^\Phi$ given in  \eqref{eq:G-one-dim}  can be represented as $A_{BC}$ like in \eqref{eq:def-A_BC-i} for $U:=\partial X$
and the operators
\begin{equation}\label{eq:def-B-C}
B:=L_A\in\sL(U,X_{-1})
\quad\text{and}\quad
C:={\Phi}\in\sL(Z,U).
\end{equation}
Moreover, the following holds.

\begin{lemma}\label{cor:rep-G}
The triple $(A,B,C)$ given by \eqref{eq:def-A-kerK}, \eqref{eq:def-B-C} is compatible in the sense of \Cref{def2}, i.e., satisfies
\[
\rg\bigl(R(\lambda,A_{-1}) B\bigr)\subseteq Z\quad\text{for all }\lambda\in\rho(A).
\]
\end{lemma}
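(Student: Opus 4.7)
The plan is to compute $R(\lambda,A_{-1})B$ explicitly and observe that it coincides with the Dirichlet operator $L_\lambda$, whose range is by construction contained in $\ker(\lambda-A_m)\subseteq D(A_m)\subseteq Z$.

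More concretely, I would fix $\lambda\in\rho(A)$ and invoke the representation \eqref{eq:def-LA}, which states
\[
B=L_A=(\lambda-A_{-1})L_\lambda\in\sL(\partial X,X_{-1}),
\]
and is independent of the chosen $\lambda\in\rho(A)$. Since by \Cref{lem:Dir-op-ex} the Dirichlet operator satisfies $L_\lambda\in\sL(\partial X,X)=\sL(\partial X,D(A_{-1}))$, the composition $(\lambda-A_{-1})L_\lambda$ is well-defined and one may legitimately apply $R(\lambda,A_{-1})$ on the left to obtain
\[
R(\lambda,A_{-1})B=R(\lambda,A_{-1})(\lambda-A_{-1})L_\lambda=L_\lambda,
\]
the last equality being just $R(\lambda,A_{-1})(\lambda-A_{-1})=Id_{X}$ on $D(A_{-1})=X$.

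By the very definition \eqref{eq:Dir-op} of $L_\lambda$ we have $\rg(L_\lambda)\subseteq\ker(\lambda-A_m)\subseteq D(A_m)$, and by the standing hypothesis $D(A_m)\subseteq Z$. Hence
\[
\rg\bigl(R(\lambda,A_{-1})B\bigr)=\rg(L_\lambda)\subseteq Z,
\]
which is exactly the compatibility condition of \Cref{def2}. There is essentially no obstacle here — the only point that deserves verification is that the algebraic identity $R(\lambda,A_{-1})(\lambda-A_{-1})L_\lambda=L_\lambda$ makes sense at the level of the extrapolation space, and this follows from the inclusion $\rg(L_\lambda)\subseteq X=D(A_{-1})$ provided by \Cref{lem:Dir-op-ex}.
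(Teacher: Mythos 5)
Your proof is correct and follows essentially the same route as the paper: both reduce the claim to the identity $R(\lambda,A_{-1})B=L_\lambda$ via \eqref{eq:def-LA} and then use $\rg(L_\lambda)=\ker(\lambda-A_m)\subseteq D(A_m)\subseteq Z$. You merely spell out the justification of the cancellation $R(\lambda,A_{-1})(\lambda-A_{-1})L_\lambda=L_\lambda$ in $X_{-1}$, which the paper leaves implicit.
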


\begin{proof}
The claim follows from the inclusions
\begin{equation*}
\rg\bigl(R(\lambda,A_{-1}) B\bigr)=\rg(L_\lambda)=\ker(\lambda-A_m)\subseteq D(A_m)\subseteq Z.
\qedhere
\end{equation*}
\end{proof}

Finally, we apply \Cref{AM} to this situation where we assume $X$ and $\partial X$ to be Banach lattices. Since by \eqref{eq:def-LA} and \eqref{eq:def-B-C} we have $C R(\lambda,A_{-1})B=\Phi L_\lambda$  we obtain the following result.

\begin{corollary}\label{cor:dX-AM}
Let $A:D(A)\subseteq X\to X$ given in \eqref{eq:def-A-kerK} be the generator of a positive  $C_0$-semigroup. Assume that $L_\mu:\partial X\to X$  for $\mu>\omega_0(A)$ and $\Phi:X\to\partial X$ are positive linear operators, where $\partial X$ is an AM-space. If the spectral radius $\sr(\Phi L_\lambda)<1$ for some $\lambda>\omega_0(A)$, then the operator $A^\Phi$ defined in \eqref{eq:G-one-dim}
generates a positive  $C_0$-semigroup on $X$.
\end{corollary}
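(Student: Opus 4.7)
The plan is to recognize the corollary as a direct application of \Cref{AM} to the triple $(A,B,C)$ with $U:=\partial X$, $B:=L_A$ and $C:=\Phi$. By \Cref{lem:G-as-SWp} and the representation \eqref{eq:rep-Aphi}, the operator $A^\Phi$ in \eqref{eq:G-one-dim} coincides with $A_{BC}=(A_{-1}+BC)|_{X}$, so once the hypotheses of \Cref{AM} are verified, the desired generator property and positivity follow at once.

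Several of these hypotheses are essentially built into the statement: $A$ generates a positive $C_0$-semigroup on the Banach lattice $X$; $U=\partial X$ is an AM-space; and $\Phi:X\to\partial X$ is positive and everywhere defined on a Banach lattice, hence automatically bounded by \cite[Thm.10.20]{BKR:17}. In the notation of \Cref{AM} this means $Z=X$ and $C=\Phi\in\sL(X,U)$, matching the standing assumption of that theorem. The spectral radius condition translates exactly to the one in the hypothesis, since \eqref{eq:def-LA} yields $R(\lambda,A_{-1})L_A=L_\lambda$ for every $\lambda\in\rho(A)$, and therefore $CR(\lambda,A_{-1})B=\Phi L_\lambda$.

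The only point requiring genuine verification, and the main (albeit mild) obstacle, is positivity of the control operator $B=L_A:U\to X_{-1}$. By \Cref{L:B-pos} this is equivalent to $R(\lambda,A_{-1})L_A=L_\lambda$ being positive as a map $\partial X\to X$ for every $\lambda>\spb(A)$. Starting from the hypothesis that $L_\mu\ge 0$ for $\mu>\omega_0(A)$, one rewrites \eqref{eq:def-L_mu} in the equivalent form
\begin{equation*}
L_\lambda=L_\mu+(\mu-\lambda)\,R(\lambda,A)\,L_\mu.
\end{equation*}
For any given $\lambda>\spb(A)$, choosing $\mu>\max\{\omega_0(A),\lambda\}$ exhibits $L_\lambda$ as a sum of two positive operators, since $R(\lambda,A)\ge0$ for $\lambda>\spb(A)$ and $\mu-\lambda>0$. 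Hence $L_\lambda\ge0$ for all $\lambda>\spb(A)$, and \Cref{L:B-pos} gives positivity of $B=L_A$.

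With positivity of $B$ secured, \Cref{AM} applies to $(A,B,C)$ and produces a positive $C_0$-semigroup on $X$ generated by $A_{BC}=(A_{-1}+L_A\Phi)|_X$. In view of \eqref{eq:rep-Aphi} this operator equals $A^\Phi$, completing the proof.
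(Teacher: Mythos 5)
Your proposal is correct and follows essentially the same route as the paper: the corollary is obtained by applying \Cref{AM} with $U=\partial X$, $B=L_A$, $C=\Phi$, using the representation $A^\Phi=(A_{-1}+L_A\Phi)|_X$ from \Cref{lem:G-as-SWp} and the identity $CR(\lambda,A_{-1})B=\Phi L_\lambda$ from \eqref{eq:def-LA}. Your explicit verification that $L_\lambda=L_\mu+(\mu-\lambda)R(\lambda,A)L_\mu\ge0$, hence $B=L_A\ge0$ via \Cref{L:B-pos}, is a correct filling-in of a step the paper leaves implicit.
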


We remark that in case $\partial X$ is finite dimensional also \Cref{PT} can be applied to this situation allowing for unbounded $\Phi:Z\subset X\to\partial X$.

\subsection{Heat Equation with Boundary Feedback on $\mathbf{L^2(\Omega)}$}\label{subsec:HE}

We give an application of \Cref{cor:dX-AM} and \Cref{thm:Dom}.
To this end we consider the partial differential equation
\begin{equation}\label{eq:HE-bfb}
\begin{cases}
\begin{aligned}
\tfrac\partial{\partial t}u(t,x)&=\Delta u(t,x),\quad &&x\in\Omega,\ t\ge0,\\
u(0,x)&=u_0(x),\quad &&x\in\Omega,\\
u(t,z)&=\int_\Omega \phi(z,x)\,u(t,x)\,dx,\quad &&z\in\partial\Omega,\ t\ge0,
\end{aligned}
\end{cases}
\end{equation}
where
\begin{itemize}
\item $\Omega$ is a bounded domain in $\RR^n$ with $C^2$-boundary $\partial\Omega$,
\item $\Delta$ denotes the Laplace operator on $\Omega$, 
\item $u_0\in \rL^2(\Omega)$, and
$\phi\in \rC(\partial\Omega,\overline{\Omega})$.
\end{itemize}
The well-posedness of a related heat equation with dynamic boundary conditions was studied in \cite[Sect.6]{CENN:03} using a matrix approach on the state space $\mathcal{X}=\rL^2(\Omega)\times \rL^2(\partial\Omega)$. In contrast, here we consider \eqref{eq:HE-bfb} on the space $X=\rL^2(\Omega)$ and show that the associated operator $A^\Phi$ generates a $C_0$-semigroup. To this end we introduce the following spaces and operators:
\begin{itemize}
\item $X:=\rL^2(\Omega)$, $\partial X:=\rL^2(\partial\Omega)$,
\item $A_m:=\Delta:D(A_m)\subset X\to X$, $D(A_m):=\{f\in \rH^{\frac12}(\Omega)\cap \rH^2_{\mathrm{loc}}(\Omega):\Delta f\in X\}$, 
\item $L:D(A_m)\to\partial X$, $Lf:=f|_{\partial\Omega}$ in the sense of traces, cf.\ \cite[Chap.2]{LM:72},
\item $\Phi:X\to\partial X$, $(\Phi f)(z):=\int_\Omega \phi(z,x)\,f(x)\,dx$, $z\in\partial\Omega$, and
\item $A,A^\Phi\subset A_m$, $D(A):=\ker L$, $D(A^\Phi):=\ker(L-\Phi)$.
\end{itemize}
As shown in \cite[Sects.3\&6]{CENN:03}, in this context the Dirichlet operators $L_\lambda:X\to\partial X$ introduced in \Cref{lem:Dir-op-ex} exist and are positive for all $\lambda\ge0$. Moreover, by \cite[Cap.5 \& App.A9]{Tay:96} the operator $A$ generates a positive analytic semigroup on $X$ satisfying $\omega_0(A)<0$.
Hence, by the results of the previous subsection we can represent the operator $A^\Phi$ as in \eqref{eq:rep-Aphi}.  However, written in this way we cannot apply any of our results since $\partial X=\rL^2(\partial\Omega)$ is neither an AM- nor an AL-space.

\smallbreak
To overcome this difficulty observe that $\phi\in \rC(\partial\Omega,\overline{\Omega})$ is uniformly continuous which implies that 
\[\rg(\Phi)\subset \rC(\partial\Omega)=:\widehat{\partial X}.\] 
Hence, we can define $\widehat \Phi:X\to
\widehat{\partial X}$, $\widehat \Phi f:=\Phi f$ for $f\in X$ which is bounded by the closed graph theorem. Moreover, let
$\widehat{L_\lambda}:={L_\lambda}|_{\widehat{\partial X}}\in\sL(\widehat{\partial X},X)$ and $\widehat{L_A}:=-A_{-1}\widehat{L_0}:\widehat{\partial X}\to X_{-1}$. Then clearly we also have
\[
A^\Phi=\bigl(A_{-1}+\widehat{L_A}\cdot\widehat\Phi\bigr)
\]
where $\widehat\Phi\in\sL(X,\widehat{\partial X})$ and $\widehat{L_A}\in\sL(\widehat{\partial X},\Xmo)$. Since $\widehat{\partial X}$ equipped with the sup-norm $\|\cdot\|_\infty$ is an AM-space we can now apply \Cref{cor:dX-AM}. To this end we first assume in addition that $\phi\ge0$ which obviously implies $\widehat{\Phi}\ge0$. Then 
\begin{equation}\label{eq:est-r(PLl)}
\sr\bigl(\widehat{\Phi}\widehat{L_\lambda}\bigr)\le\bigl\|\widehat{\Phi}\widehat{L_\lambda}\bigr\|_{\sL(\widehat{\partial X})}
\le\bigl\|\widehat{\Phi}\bigr\|_{\sL(X,\widehat{\partial X})}\cdot\bigl\|\widehat{L_\lambda}\bigr\|_{\sL(\widehat{\partial X},X)}.
\end{equation}
To estimate the norm of $\widehat{L_\lambda}$ observe that this operator is positive and for the $\|\cdot\|_\infty$-norm on $\widehat{\partial X}=\rC(\partial\Omega)$ it holds $|y|\le\one$ for all $y\in\widehat{\partial X}$ satisfying $\|y\|_{\widehat{\partial X}}\le1$, where $\one(z):=1$ for all $z\in\partial\Omega$. Hence, using \eqref{eq:def-L_mu}  and \cite[Lem.II.3.4.(i)]{EN:00} we conclude 
\begin{align*}
\bigl\|\widehat{L_\lambda}\bigr\|_{\sL(\widehat{\partial X},X)}
&=\sup\bigl\{\bigl\|\widehat{L_\lambda} y\bigr\|_X:y\in\widehat{\partial X},\|y\|_{\widehat{\partial X}}\le1\bigr\}\\
&=\|L_\lambda\one\|_X=\|-AR(\lambda,A)L_0\one\|_X\\
&=\bigl\|(Id-\lambda R(\lambda,A)\bigr)L_0\one\bigl\|_X\to0
\end{align*}
as $\lambda\to+\infty$. Therefore, by \eqref{eq:est-r(PLl)} there exists $\lambda>0$ such that $\sr(\widehat{\Phi}\widehat{L_\lambda})<1$ and by \Cref{cor:dX-AM} we conclude that $A^\Phi$ generates a positive semigroup on $X$.

\smallbreak
Now let $\phi\in \rC(\partial\Omega,\Omega)$ be arbitrary and denote the integral operator associated to $0\le|\phi|\in \rC(\partial\Omega,\Omega)$ by $\widetilde{\Phi}$. Then obviously $|\widehat{\Phi}y|\le\widetilde{\Phi}y$ for all $y\in\widehat{\partial X}$. Hence, by \Cref{thm:Dom} we obtain that  for arbitrary $\phi\in \rC(\partial\Omega,\Omega)$ the operator
\begin{align*}
&A^\Phi:=\Delta:D(A^\Phi)\subset \rL^2(\Omega)\to \rL^2(\Omega),\\
&D(A^\Phi):=\Bigl\{f\in \rH^{\frac12}(\Omega)\cap \rH^2_{\mathrm{loc}}(\Omega):\Delta f\in \rL^2(\Omega),\, f(z)=\int_\Omega\phi(z,x)\,f(x)\,dx,\, z\in\partial\Omega\Bigr\}
\end{align*}
generates a $C_0$-semigroup on $\rL^2(\Omega)$. Since \eqref{eq:HE-bfb} can be rewritten as \eqref{ACP-1} for $G:=A^\Phi$, we conclude that equation \eqref{eq:HE-bfb} is well-posed.

\begin{remark}
Since $X=\rL^2(\Omega)$ is not an AM-space, the before-mentioned results from \cite{BJVW:18} cannot be applied to this example. Hence, imposing that the observation/control space $U$ is of type AM (or AL) is a real generalization of the hypothesis that the state space $X$ itself has this property. Also, the results of \cite{Gre:87} don't work in this example since they are restricted to non-reflexive Banach spaces $X$.
\end{remark}
\subsection{Unbounded Perturbation of the First Derivative on $\mathbf{C_0(0,1]}$}\label{subsec:uP-FD}
We give an application of \Cref{AL,thm:Dom}. To this end we choose
\begin{itemize}
\item $X:=\rC_0(0,1]=\{f\in\rC[0,1]:f(0)=0\}$, $U=\rL^1[0,1]$ which is an AL-space,
\item $A:=-\frac d{dx}:D(A)\subset X\to X$, $D(A):=\{f\in\rC^1[0,1]:f(0)=f'(0)=0\}$,
\item $P:D(A)\subset X\to X$, $(P f)(x):=\int_0^x \frac{b(x-r)}{r^\alpha}\cdot f(r)\,dr$ for $\alpha\in[1, 2)$ and $b\in\rL^\infty[0,1]$, and
\item $G:=A+P:D(A)\subset X\to X$.
\end{itemize}
We claim that $G$ generates a $C_0$-semigroup which is positive in case $b\ge0$. 

\smallskip
For the proof we first observe that $A$ generates the positive nilpotent right-shift semigroup on $X$, cf.~\cite[Sect.I.4.17]{EN:00}. Then we factorize $P=BC$ where
\begin{itemize}
\item $B:U\to X$, $(B u)(x):=(b*u)(x)$, and
\item $C:D(A)\subset X\to U$, $(Cf)(x):=\frac{f(x)}{x^\alpha}$.
\end{itemize}
Indeed, $\rg(B)\subset X$ follows by Young's inequality while the fact $\alpha-1<1$ and
\[
\lim_{x\to0^+}\frac{\frac{f(x)}{x^\alpha}}{\;\frac{1}{x^{\alpha-1}}\;}=\lim_{x\to0^+}\frac{f(x)}{x}=f'(x)=0
\]
for $f\in D(A)$ implies $\rg(C)\subset U$. 

\smallskip
In order to apply \Cref{AL} we first assume that $b\ge0$ which implies $B,C\ge 0$. Then to verify that $\sr(CR(\lambda,A)B)<1$ for $\lambda>0$ sufficiently we estimate for $g\in U$
\begin{equation}\label{eq:est-CRB}
\bigl\|CR(\lambda,A)Bu\bigr\|_1
\le\bigl\|CR(\lambda,A)\bigr\|_{\sL(C_0,L^1)}\cdot\bigl\|Bu\bigr\|_\infty.
\end{equation}
Now a simple computation shows that for $f\in X$ and $\lambda>0$ we have
\[
\bigl(R(\lambda,A)f\bigr)(x)=\int_0^x e^{-\lambda(x-r)}f(r)\,dr,\ x\in[0,1].
\]
Hence, for $f\in X$ satisfying $\|f\|_\infty\le1$ we obtain
\begin{equation}\label{eq:est-CRf}
\begin{aligned}
\bigl\|CR(\lambda,A)f\bigr\|_1&=\int_0^1\Bigl|x^{-\alpha}\int_0^x e^{-\lambda(x-r)}f(r)\,dr\Bigr|\,dx\\
&\le\int_0^1x^{-\alpha}\int_0^x e^{-\lambda(x-r)}\,dr\,dx\\
&= \int_0^1\lambda^{-1}x^{-\alpha}\bigl(1-e^{-\lambda x}\bigr)\,dx\\
&=:\int_0^1r_\lambda(x)\,dx.
\end{aligned}
\end{equation}

Since $1-e^{-\lambda x}\le \lambda x$ for all $x\in[0,1]$ and $\lambda>0$ it follows that $r_\lambda(x)\le x^{1-\alpha}=:m(x)$ for all $x\in[0,1]$. Moreover,  $m\in\rL^1[0,1]$ since $\alpha<2$ and $\lim_{\lambda\to+\infty}r_\lambda(x)=0$ for all $x\in(0,1]$. By Lebesgue's dominated convergence theorem it follows that
\[
\int_0^1r_\lambda(x)\,dx\to0\quad\text{as $\lambda\to+\infty$}.
\]
By \eqref{eq:est-CRf} this implies $\|CR(\lambda,A)\|_{\sL(C_0,L^1)}\to0$ as $\lambda\to+\infty$ and from \eqref{eq:est-CRB} we finally obtain 
\[
\sr\bigl(CR(\lambda,A)B\bigr)\le\bigl\|CR(\lambda,A)B\bigr\|_{\sL(U)}\to0\quad\text{as $\lambda\to+\infty$}.
\]
At this point we can apply \Cref{AL} and conclude that $G$ is a generator if $b\ge0$. In case $b\in\rL^\infty[0,1]$ is not positive, we define the positive convolution operators $B_\pm\in\sL(U,X)$ by $(B_\pm u)(x):=(b_\pm*u)$ for which $B=B_+-B_-$ holds. The assertion then follows from \Cref{thm:Dom}.

\begin{remark}
In this example we applied \Cref{AL} regarding AL-spaces to the perturbation of an operator $A$ acting on an AM-space. We note that the results of \cite{BJVW:18} on AM-spaces are not applicable in this case since they only cover perturbations $P:X\to\Xmo$. Moreover, the results of \cite{Des:88,Voi:89} are not applicable since $X$ is not an AL-space.
Finally, we mention that by essentially the same proof this example can be generalized substituting the convolution operator $B$ by an arbitrary regular operator $B:U\to X$.
\end{remark}
\subsection{Rank-One Perturbation of the First Derivative on $\mathbf{L^p[0,1]}$}\label{subsec:r1p-FD}

We give an application of \Cref{PT,thm:Dom}. To this end we consider the partial differential equation
\begin{equation}\label{eq:ACP-1}
\begin{cases}
\begin{aligned}
\tfrac\partial{\partial t}u(t,x)&=\tfrac\partial{\partial x}u(t,x)+\Phi\bigl(u(t,\cdot)\bigl)\cdot b(x),\quad &&x\in[0,1],\ t\ge0,\\
u(0,x)&=u_0(x),\ u(t,1)=0,\quad &&x\in[0,1],\ t\ge0,
\end{aligned}
\end{cases}
\end{equation}
for some linear functional $\Phi\in \rC[0,1]'$ and a function $b\in \rL^1[0,1]$. In \cite[Expl.5.1]{BJVW:18} it was shown that for  $\Phi(f)=\int_0^1 f(x)\,dx$ this problem is well-posed on the AM-space $X=C_0[0,1)$. This would also follow easily from \Cref{AM} and the subsequent computations. However, here we consider \eqref{eq:ACP-1}  on $X=\rL^p[0,1]$ for $1\le p<+\infty$ showing well-posedness for arbitrary $\Phi\in \rC[0,1]'$.

\smallbreak
In order to apply our results we introduce the following spaces and operators:
\begin{itemize}
\item $X^p:=\rL^p[0,1]$ for some $1\le p<+\infty$, $U=\RR$,
\item $A^p:=\frac d{dx}:D(A^p)\subset X^p\to X^p$ for $D(A^p):=\{f\in \rW^{1,p}[0,1]:f(1)=0\}$,
\item $B:U\to \rL^1[0,1]$, $B\alpha:=\alpha\cdot b$ for all $\alpha\in U$,
\item $C:=\Phi:Z\subset X\to U$ for $Z:=\rC[0,1]$.
\end{itemize}
It is well known that $A^p$ has empty spectrum $\sigma(A^p)$ and generates the positive nilpotent left-shift semigroup $(T^p(t))_{t\ge0}$ given by 
\begin{equation*} 
\bigl(T^p(t)f\bigr)(x):=
\begin{cases}
f(x+t)&\text{if $x+t\le1$},\\
0&\text{else.}
\end{cases}
\end{equation*}
Now fix $p\ge1$ and define 
\begin{itemize}
\item $X:=X^p$, $A:=A^p$, $T(t):=T^p(t)$ for all $t\ge0$, and
\item $\Xt:=X^1$, $\At:=A^1$, $\Tt(t):=T^1(t)$ for all $t\ge0$.
\end{itemize}
Since $\rW^{1,1}[0,1]\hookrightarrow \rL^p[0,1]\hookrightarrow \rL^1[0,1]$ with continuous injections, \Cref{lem:sub-sgr}.(3) implies $\Xt\subseteq X_{-1}$ and $B\in\sL(U,X_{-1})$. Moreover, by (4) \& (5) the extrapolated operators $R(\lambda,A_{-1})$, $T_{-1}(t)$ and $A_{-1}$ restricted to $\Xt$ coincide with $R(\lambda,\At)$, $\Tt(t)$ and $\At$, respectively. 

\smallskip
We proceed by verifying the assumptions of \Cref{PT} in case $\Phi\in \rC[0,1]'_+$ and $b\in \rL^1[0,1]_+$ are positive. Clearly this implies that also $C:Z\to U$ and $B:U\to\Xmo$ are positive. Since $\rg(R(\lambda,A_{-1})B)=\spn(R(\lambda,\At)b)\in \rW^{1,1}[0,1]\subset \rC[0,1]=Z$ the triple $(A,B,C)$ is compatible. 
In order to verify that $B$ is $p$-admissible we compute for $u\in \rL^p[0,1]$
\begin{align*}
g(\cdot):&=\int_0^1T_{-1}(1-s)Bu(s)\,ds=\int_0^1\Tt(1-s)b(\cdot)u(s)\,ds\\
&=\int_{\bf\cdot}^1b(\cdot+1-s)u(s)\,ds=(b_1*\tilde u)(\cdot),
\end{align*}
where $\tilde u$ is the extension of $u:[0,1]\to\RR$ to all of $\RR$ by the value $0$ and
\[
b_1(x):=
\begin{cases}
b(x+1)&\text{if $x\in[-1,0]$},\\
0&\text{if $x\in\RR\setminus[-1,0]$}.
\end{cases}
\]
Since $b_1\in \rL^1(\RR)$ and $\tilde u\in \rL^p(\RR)$, by Young's inequality it follows $b_1*\tilde u\in \rL^p(\RR)$. This implies $g=(b_1*\tilde u)|_{[0,1]}\in \rL^p[0,1]$, hence $B$ is $p$-admissible. 

\smallbreak
The $p$-admissibility of $C=\Phi\in \rC[0,1]'$ is shown in \cite[proof of Cor.25, (iii)]{ABE:14}. It only remains to show that $\sr(CR(\lambda,A_{-1})B)=\Phi R(\lambda,\At)b<1$ for some $\lambda\in\RR$.  Since $\At$ generates a $C_0$-semigroup on $\Xt$ there exists $M\ge1$ such that 
\[
\bigl\|R(\lambda,\At)\bigr\|_{\sL(\Xt,\Xt_1)}\le M\quad\text{for all $\lambda\ge0$}.
\]
Moreover, for $f\in D(\At)$ we have
\begin{align*}
\bigl\|R(\lambda,\At)f\bigr\|_{\Xt_1}
&=\bigl\|\At^{-1}R(\lambda,\At)\At f\bigr\|_{\Xt_1}\\
&\le\bigl\|\At^{-1}\bigr\|_{\sL(\Xt,\Xt_1)}\cdot\bigl\|R(\lambda,\At)\At f\bigr\|_{\Xt}
\to 0\quad\text{as $\lambda\to+\infty$}.
\end{align*}
By \cite[Sec.III.4.5]{Sch:80} this implies $\|R(\lambda,\At)f\|_{\Xt_1}\to0$ as $\lambda\to+\infty$ for all $f\in\Xt=\rL^1[0,1]$.
Since $\Xt_1=\rW^{1,1}[0,1]\hookrightarrow\rC[0,1]=Z$ with continuous injection we conclude
\[
\lim_{\lambda\to+\infty}C R(\lambda,A_{-1})B
\le\lim_{\lambda\to+\infty}\|\Phi\|_{Z'}\cdot\bigl\|R(\lambda,\At)b\bigr\|_Z=0.
\]
Summing up, by \Cref{PT} we obtain that for positive $C=\Phi\in Z'_+$ and $b\in\Xt_+$ the operator $A_{BC}$ generates a positive semigroup on $X$.

\smallbreak
If $\Phi\in Z'$ and $b\in\Xt$ are arbitrary we use the Jordan decomposition 
to write $\Phi=\Phi_+-\Phi_-$ for positive functionals $\Phi_\pm\in Z'_+$.  Then $\widetilde{C}:=\Phi_++\Phi_-$  satisfies \Cref{MA-dom}.(i). Moreover, for the operators $B_\pm:U\to X_{-1}$, $B\alpha:=\alpha\cdot b_\pm$, $\alpha\in U$ also \Cref{MA-dom}.(ii) is satisfied, hence by \Cref{thm:Dom} the operator 
\begin{align*}
&G:=\tfrac d{ds}+\Phi\otimes b:D(G)\subset \rL^p[0,1]\to \rL^p[0,1],\\
&D(G):=\bigl\{f\in \rW^{1,1}[0,1]: f'+\Phi(f)\cdot b\in \rL^p[0,1]\bigr\}
\end{align*}
generates a $C_0$-semigroup on $\rL^p[0,1]$ for arbitrary $1\le p<+\infty$, $\Phi\in \rC[0,1]'$ and $b\in \rL^1[0,1]$, where $\Phi\otimes b:Z\to X_{-1}$, $(\Phi\otimes b)f:=\Phi(f)\cdot b$. Since \eqref{eq:ACP-1} on $X$ is equivalent to \eqref{ACP-1}, this implies that the equation \eqref{eq:ACP-1} is well-posed.

\begin{remark}
We note that in case $p>1$ the space $X=\rL^p[0,1]$ is neither of type AM- nor AL, and for $b\in \rL^1[0,1]\setminus X\ne\emptyset$ we have $\rg(B)=\spn(b)\not\subset X$. 
In addition in our setting the operator $C\in \rC[0,1]'$ might be unbounded on $X$, which means that the perturbation $P=\Phi\otimes b$ changes both action and domain of the operator $A$. In particular, in this case the results from \cite{BJVW:18,Des:88,Voi:89} based on the Miyadera--Voigt or Desch--Schappacher perturbation theorems (where one of the operators $C$ or $B$ has to be bounded) are not applicable to this situation.
\end{remark}

\section{Conclusion}
In this paper we showed how factorizing a perturbation $P:Z\to X_{-1}$ as $P=BC$ for operators $C:Z\to U$ and $B:U\to X_{-1}$ via a Banach lattice $U$ as in the diagram
\begin{equation*} 
\begindc{\commdiag}[20]
\obj(10,20)[Z]{$D(A)\subseteq Z\subseteq X$}
\obj(60,20)[U]{$U$}
\obj(112,20)[X]{$X_{-1}$}
\cmor((14,24)(15,28)(19,29)(62,29)(105,29)(109,28)(110,24)) \pdown(60,33){$P$}
\cmor((14,16)(15,12)(19,11)(37,11)(54,11)(58,12)(59,16)) \pup(35,7){$C$}
\cmor((61,16)(62,12)(66,11)(85,11)(105,11)(109,12)(110,16)) \pup(85,7){$B$}
\enddc
\end{equation*}
allows to generalize significantly previous results on the perturbation of the generator $A$ of a positive semigroup on a Banach lattice $X$. 
More precisely, in \cite{Des:88,Voi:89,BJVW:18} the state space $X$ itself has to be an AM- or AL-space which, e.g., a priori excludes applications on (infinite dimensional) reflexive spaces. In contrast, our approach only needs $U$ to be of type AM or AL which, as shown in \Cref{sec:App}, significantly widens the possible applications. Moreover, if $U=\R$ we can even treat unbounded perturbations $P=BC:Z\subset X\to X_{-1}$ with $\rg(P)\not\subset X$ which also change the domain of the unperturbed operator $A$. 
\appendix

\section{Appendix}

\subsection{Banach Lattices and Positive Operators}\label{appa}
In this appendix we briefly recall some basic facts about Banach lattices and positive operators. Fore more details we refer to \cite[Chap.C-I]{Nag(e):86}, \cite[Chap.10]{BKR:17} or \cite[Sect.2.2]{BA:06}. Moreover, we state and proof a Riesz-Thorin type theorem for $\R$-valued functions.

\smallbreak
A real vector space $X$ equipped with a partial order $\le$ is a \emph{vector lattice} if for any pair $x,y\in X$  of elements the greatest lower bound $\inf\{x,y\}\in X$ and the least upper bound $\sup\{x,y\}\in X$ exist, and the order is compatible with the vector space structure, i.e.,
\begin{itemize}
\item $x\le y$ implies $x+z\le y+z$ for all $x,y,z\in X$;
\item $0\le x$ implies $0\le \alpha x$ for all $\alpha\in\mathbb{R}^+$.
\end{itemize}
For a vector lattice $X$ we denote by 
\[X_+:=\{x\in X:\,0\le x\}\]
its \emph{positive cone}. Moreover, for $x\in X$ we define its \emph{positive part} $x^+=\sup\{x,0\}$, its \emph{negative part} $x^-=\sup\{-x,0\}$ and its \emph{absolute value} $|x|=\sup\{x,-x\}$. In this case we have $x=x^+-x^-$ and $|x|=x^++x^ -$.

\smallbreak
A norm $\lVert\cdot\rVert$ on a vector lattice $X$ is called a \emph{lattice norm} if it is compatible with the order, i.e., if
\begin{equation}\label{lat}
|x|\le|y|\Rightarrow\lVert x\rVert\le\lVert y\rVert\quad\text{for }x,y\in X.
\end{equation}
Note that for a lattice norm we always have $\||x|\|=\|x\|$ for all $x\in X$.
If a real vector lattice $X$ endowed with a lattice norm is complete, then $X$ is called a real \emph{Banach lattice}. 
A Banach lattice $X$ is said to be an
\begin{itemize}
\item \emph{AL-space}, if $\|x+y\|=\|x\|+\|y\|$ for all $x,y\ge0$,
\item \emph{AM-space}, if $\|\sup\{x,y\}\|=\sup\{\|x\|,\|y\|\}$ for all $x,y\ge0$.
\end{itemize}
We remark that if for some Banach lattice $X$ there exist two comparable norms $\|\cdot\|_1$ and $\|\cdot\|_\infty$ such that $(X\|\cdot\|_1)$ is of type AL and $(X\|\cdot\|_\infty)$ of type AM, then $X\simeq\R$ for some $N\in\NN$. 

\smallbreak
A vector subspace $\mathcal{C}$ of a vector lattice $X$ is called \emph{vector sublattice} if for all $x\in\mathcal{C}$ one has $|x|\in\mathcal{C}$, hence also $x^+,x^-\in\mathcal{C}$.

\begin{remark}\label{posden}
For a Banach lattice $X$, the operations $\sup\{\cdot,\cdot\}$ and $\inf\{\cdot,\cdot\}$ are continuous. This implies that the positive cone $X_+$ is closed (cf.\  \cite[Prop.10.8]{BKR:17}).
\end{remark}

Besides $\R$ the standard examples of Banach lattices are the spaces $l^p$, $\rC(K)$, $\rC_0(\Omega)$ and $\rL^p(\Omega;\mu)$ equipped with the natural order and the canonical norms. In particular
\begin{itemize}
\item $(\R,|\cdot|_1)$, $l^1$ and $\rL^1(\Omega;\mu)$ are AL-spaces,
\item $(\R,|\cdot|_\infty)$,  $c$, $c_0$, $l^\infty$, $\rL^\infty(\Omega;\mu)$, $\rC(K)$ and $\rC_0(\Omega)$ are AM-spaces,
\end{itemize}
see \cite[Expl.10.6]{BKR:17}.

\begin{definition}\label{posop}
If $X$ and $Y$ are two real Banach lattices, an operator $T:X\to Y$ is said to be \emph{positive} if $Tx\in Y_+$ for every $x\in X_+$. In this case we use the notation $T\ge0$. Operators which can be written as the difference of two positive operators are called \emph{regular}.
\end{definition}
We remark that any linear positive operator $T:X\to Y$ between Banach lattices is bounded, see \cite[Thm.10.20]{BKR:17}.
An important characterization of positive operators is the following.

\begin{proposition}\label{pos}\cite[Lem.10.18]{BKR:17}
For a linear operator $T:X\to Y$ between two real Banach lattices, the following are equivalent:
\begin{enumerate}[label=(\alph*)]
	\item $T\ge0$;
	\item $(Tx)^+\le Tx^+$ and $(Tx)^-\le Tx^-$ for all $x\in X$;
	\item $|Tx|\le T|x|$ for all $x\in X$.
\end{enumerate}
\end{proposition}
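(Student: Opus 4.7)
The plan is to prove the three conditions are equivalent via the cycle (a) $\Rightarrow$ (b) $\Rightarrow$ (c) $\Rightarrow$ (a), each implication being a short lattice manipulation using the standard identities $x = x^+ - x^-$, $|x| = x^+ + x^-$, $x^+,x^-\ge 0$, and $x \le x^+$, $-x \le x^-$.

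For (a) $\Rightarrow$ (b), I would fix $x \in X$ and observe that $x \le x^+$ together with $T \ge 0$ (applied to $x^+ - x \ge 0$) yields $Tx \le Tx^+$. Since also $0 \le Tx^+$ (because $x^+ \ge 0$ and $T$ is positive), the definition $(Tx)^+ = \sup\{Tx,0\}$ gives $(Tx)^+ \le Tx^+$. The inequality $(Tx)^- \le Tx^-$ follows by the same argument applied to $-x$, using $-x \le x^-$.

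For (b) $\Rightarrow$ (c), I would simply write
\[
|Tx| = (Tx)^+ + (Tx)^- \le Tx^+ + Tx^- = T(x^+ + x^-) = T|x|,
\]
where the inequality uses (b) and the fact that adding positive elements preserves order.

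For (c) $\Rightarrow$ (a), fix $x \in X_+$, so that $|x| = x$. Then (c) gives $|Tx| \le T|x| = Tx$, and since $0 \le |Tx|$ always holds, we conclude $0 \le Tx$, i.e.\ $T$ is positive.

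There is no real obstacle here; the statement is a purely formal consequence of the lattice axioms and the definitions of $x^\pm$ and $|x|$, and each implication is essentially one line. The only subtle point worth being careful about is that in (c) $\Rightarrow$ (a) one must restrict to $x \in X_+$ before invoking $|x| = x$, rather than trying to deduce positivity from (c) applied to an arbitrary $x$.
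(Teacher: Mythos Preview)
Your argument is correct; each implication is the standard one-line lattice computation and there are no gaps. Note that the paper does not actually supply a proof of this proposition but simply cites \cite[Lem.10.18]{BKR:17}, so there is no ``paper's own proof'' to compare against---your proof is precisely the expected one.
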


We denote by $\sL(X,Y)_+$ the set of positive linear operators from the Banach lattice $X$ to the Banach lattice $Y$. For positive operators the operator norm is given by
\begin{equation*} 
\lVert T\rVert_{\sL(X,Y)}:=\sup\bigl\{\lVert Tx\rVert_Y: x\in X_+\,\,\text{and}\,\,\lVert x\rVert_X\le 1\bigr\}.
\end{equation*}
When there is no risk of ambiguity the operator norm is simply denoted by $\lVert\cdot\rVert$.

Since the norm $\lVert\cdot\rVert_Y$ on the Banach lattice $Y$ satisfies \eqref{lat}, the operator norm inherits the same property, i.e., if $S,T\in\sL(X,Y)_+$ satisfy $S\le T$, then $\lVert S\rVert\le\lVert T\rVert$.

We recall that for a bounded operator $T$ on a Banach space $X$ its \emph{spectrum} 
\[\sigma(T):=\bigl\{\lambda\in\mathbb{C}:\lambda-T\text{ is not bijective}\bigr\}\] 
is a nonempty, compact subset of $\mathbb{C}$. Consequently, the \emph{resolvent set} $\rho(A):=\mathbb{C}\setminus\sigma(A)$ is an open, nonempty subset of $\mathbb{C}$. 
The \emph{spectral radius} of $T$ is defined as 
\begin{equation*}
\sr(T):=\sup\bigl\{|\lambda|:\lambda\in\sigma(T)\bigr\}
\end{equation*}
which satisfies $\sr(T)\le\lVert T\rVert$ (see \cite[Cor.IV.1.4]{EN:00}).

\smallbreak
The spectral radius can be calculated using \emph{Gelfand's formula}.
\begin{proposition}\cite[Prop.3.3]{BKR:17}\label{gelf}
For an operator $T\in\sL(X)$ on a Banach space $X$
	\begin{equation}\label{gelfor}
		\sr(T)=\lim_{n\to\infty}\lVert T^n\rVert^{1/n}.
	\end{equation}
Moreover, if $|\lambda|>\sr(T)$ then $R(\lambda,T)$ is given by the \emph{Neumann series}
\[
R(\lambda,T)=\sum_{n=0}^{+\infty}\lambda^{-(n+1)}\cdot T^n.
\]
\end{proposition}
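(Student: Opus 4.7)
The plan is to introduce $r_0:=\lim_{n\to\infty}\|T^n\|^{1/n}$, first verify that this limit exists, and then establish both $\sr(T)\le r_0$ and $r_0\le\sr(T)$. Existence of the limit follows from Fekete's subadditivity lemma applied to the sequence $a_n:=\log\|T^n\|$, which is subadditive by submultiplicativity of the operator norm ($\|T^{m+n}\|\le\|T^m\|\cdot\|T^n\|$). Thus $\lim_n a_n/n$ exists in $[-\infty,\infty)$ and equals $\inf_n a_n/n$, so $r_0=\inf_n\|T^n\|^{1/n}\in[0,\infty)$.

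The inequality $\sr(T)\le r_0$ and the Neumann series representation would be established simultaneously. For $|\lambda|>r_0$ the root test (using $\|\lambda^{-(n+1)}T^n\|^{1/n}\to r_0/|\lambda|<1$) yields absolute convergence of $S_\lambda:=\sum_{n=0}^\infty\lambda^{-(n+1)}T^n$ in $\sL(X)$, and the telescoping computation
\[
(\lambda-T)\sum_{n=0}^N\lambda^{-(n+1)}T^n=Id-\lambda^{-(N+1)}T^{N+1}\longrightarrow Id,
\]
together with its analogue on the right, identifies $S_\lambda=R(\lambda,T)$. Hence $\{|\lambda|>r_0\}\subseteq\rho(T)$, giving $\sr(T)\le r_0$, and the Neumann series formula stated in the proposition holds throughout $|\lambda|>\sr(T)$.

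The main obstacle is the reverse inequality $r_0\le\sr(T)$, which I would handle via complex function theory. Define $h(z):=z\cdot R(z^{-1},T)$, holomorphic on $\{0<|z|<1/\sr(T)\}$ since $\rho(T)$ contains $\{|\lambda|>\sr(T)\}$ and the resolvent is holomorphic there. The Neumann expansion above forces $\|R(\lambda,T)\|\to 0$ as $|\lambda|\to\infty$, so $h$ extends holomorphically across $z=0$ with $h(0)=0$. By uniqueness of Taylor coefficients the extended function has Taylor series
\[
h(z)=\sum_{n=0}^\infty T^n\,z^{n+1}
\]
at $z=0$. Its radius of convergence is on one hand at least $1/\sr(T)$ (the domain of holomorphy of $h$) and on the other equals $1/\limsup_n\|T^n\|^{1/n}=1/r_0$ by the Cauchy--Hadamard formula. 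Therefore $r_0\le\sr(T)$, and combining both inequalities yields Gelfand's identity \eqref{gelfor} together with the Neumann series representation.
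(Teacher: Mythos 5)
Your proposal is correct and is the standard textbook argument; the paper itself gives no proof of this proposition but simply cites \cite[Prop.3.3]{BKR:17}, where essentially the same three-step proof (Fekete subadditivity for existence of the limit, Neumann series for $\sr(T)\le r_0$, holomorphy of the resolvent plus Cauchy--Hadamard for $r_0\le\sr(T)$) is carried out. Two harmless slips: with your definition $h(z)=z\cdot R(z^{-1},T)$ the Taylor expansion is $\sum_{n\ge0}T^nz^{n+2}$ rather than $\sum_{n\ge0}T^nz^{n+1}$ (irrelevant for the radius-of-convergence argument), and Fekete's lemma needs the trivial side remark that if $\|T^n\|=0$ for some $n$ then $T$ is nilpotent and both sides of \eqref{gelfor} vanish.
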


The identity \eqref{gelfor} implies the monotonicity of the spectral radius as follows.

\begin{proposition}\label{spr}
Let $S,T:X\to X$ be linear operators on a Banach lattice $X$ satisfying $|Sx|\le Tx$ for every $x\ge0$. Then 
\begin{enumerate}[label=(\roman*)]
\item $|S^nx|\le T^n|x|$ for every $n\in\NN$ and $x\in X$;
\item $S,T\in\sL(X)$ and $\|S^n\|\le\|T^n\|$ for every $n\in\NN$;
\item $\sr(S)\le \sr(T)$.
\end{enumerate}
\end{proposition}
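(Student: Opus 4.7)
The plan is to prove the three assertions in order, with (i) being the main induction, (ii) following from (i) together with the lattice norm property, and (iii) being an immediate consequence of (ii) via Gelfand's formula.

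First I would observe that the hypothesis $|Sx|\le Tx$ for $x\ge0$ implicitly forces $T$ to be positive, since $Tx\ge|Sx|\ge0$ for every $x\in X_+$. Consequently, by the fact that positive operators between Banach lattices are automatically bounded (cited earlier from \cite[Thm.10.20]{BKR:17}), we have $T\in\sL(X)$ at no cost, and by \Cref{pos} we get the useful monotonicity $|Ty|\le T|y|$ for all $y\in X$.

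Next I would prove (i) by induction on $n$. For $n=1$, writing $x=x^+-x^-$ with $x^\pm\ge0$, the triangle inequality combined with the hypothesis applied to $x^+$ and $x^-$ gives
\begin{equation*}
|Sx|=|Sx^+-Sx^-|\le |Sx^+|+|Sx^-|\le Tx^++Tx^-=T|x|.
\end{equation*}
For the inductive step, assume $|S^nx|\le T^n|x|$. Applying the case $n=1$ to $S^nx$ yields $|S^{n+1}x|=|S(S^nx)|\le T|S^nx|$, and then positivity of $T$ (more precisely, of $T^n$, which is again positive as a composition of positive operators) together with the inductive hypothesis gives $T|S^nx|\le T(T^n|x|)=T^{n+1}|x|$. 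This is the main (and only slightly subtle) step, since one must remember to use the positivity of $T$ to push the inequality $|S^nx|\le T^n|x|$ forward under $T$.

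Then (ii) follows directly from (i) and the lattice norm property \eqref{lat}: for any $x\in X$ we have $\|S^nx\|=\||S^nx|\|\le \|T^n|x|\|\le \|T^n\|\cdot\||x|\|=\|T^n\|\cdot\|x\|$, which shows $S^n\in\sL(X)$ with $\|S^n\|\le\|T^n\|$, and in particular $S=S^1\in\sL(X)$. Finally, (iii) is immediate from Gelfand's formula \eqref{gelfor}:
\begin{equation*}
\sr(S)=\lim_{n\to\infty}\|S^n\|^{1/n}\le \lim_{n\to\infty}\|T^n\|^{1/n}=\sr(T).
\end{equation*}
No genuine obstacle appears; the argument is essentially a clean induction plus the definition of a lattice norm, once one notices that the hypothesis silently makes $T$ positive.
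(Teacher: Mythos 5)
Your proof is correct and follows essentially the same route as the paper: the same $x=x^+-x^-$ decomposition for the base case, the same induction (you compose as $S\circ S^n$ where the paper uses $S^n\circ S$, an immaterial difference), and the same deduction of (ii) from the lattice norm property and of (iii) from Gelfand's formula. Your explicit remark that the hypothesis forces $T\ge0$ is a point the paper leaves implicit when it invokes \cite[Thm.10.20]{BKR:17}, but it is not a substantive deviation.
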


\begin{proof}
We start by proving (i) by induction. For $n=1$ we have for $x=x^+-x^-\in X$
\begin{equation*}
|Sx|=\bigl|S(x^+-x^-)\bigr|\le|Sx^+|+|Sx^-|\le T(x^++x^-)=T|x|
\end{equation*}
as claimed. Now assume that (i) holds for some $n\in\NN$. Then
\begin{equation*}
\bigl|S^{n+1}x\bigr|=\bigl|S^n Sx\bigr|\le T^n|Sx|\le T^n T|x|=T^{n+1}|x|
\end{equation*}
which proves (i). To prove (ii) we first observe that by \cite[Thm.10.20]{BKR:17} the operator $T$ is bounded. Now take $x\in X$. Then from (i) we obtain
\begin{equation*}
\|S^nx\|=\bigl\||S^nx|\bigr\|\le\|T^n|x|\|\le\|T^n\|\cdot\||x|\|=\|T^n\|\cdot\|x\|.
\end{equation*}
This implies (ii). Finally, using (ii) we then conclude that
\[\sr(S)=\lim_{n\to\infty}\lVert S^n\rVert^{1/n}\le\lim_{n\to\infty}\lVert T^n\rVert^{1/n}=\sr(T)\]
as claimed.
\end{proof}

Finally, for the generator $A$ of a $C_0$-semigroup $(T(t))_{t\ge0}$ we introduce its \emph{spectral bound} $\spb(A)$ and \emph{growth bound} $\omega_0(A)$ by
\begin{align*}
\spb(A)&:=\sup\bigl\{\Re(\lambda):\lambda\in\sigma(A)\bigr\},\\
\omega_0(A)&:=\inf\{\omega\in\mathbb{R}:\exists M_\omega\ge1\text{ such that }\bigl\|T(t)\bigr\|\le M_\omega\cdot e^{\omega t}\ \forall t\ge0\bigr\}.
\end{align*}
Then by \cite[Cor.II.1.13]{EN:00} we always have $-\infty\le \spb(A)\le\omega_0(A)<+\infty$.

\smallbreak
The following result is a version of the Riesz--Thorin convexity theorem for positive operators defined on spaces of vector-valued functions. The proof of this theorem is inspired by \cite{Haa:07}: we first give a Hölder-type inequality for these operators and then use the latter to provide an estimate on their norms. 

\begin{theorem}\label{RT}
Let $T_0\in\sL(\ceor)$ and $T_1\in\sL(\lrr)$ be positive operators such that $\|T_0\|_{\sL(C_0)}\le M_0$, $\|T_1\|_{\sL(\rL^1)}\le M_1$ and
$T_0f=T_1f$ for all $f\in\mathcal{C}:=C_c((0,+\infty),\R)$.
Then for any $1<p<\infty$ the  operator 
\[T:\mathcal{C}\subset\lprr\to\lrr\cap\ceor\subset\lprr\]
given by $Tf:=T_0f=T_1f$ for $f\in\mathcal{C}$, possesses a unique positive bounded extension (still denoted by $T$) $T\in\sL(\lprr)$ satisfying
\begin{equation}\label{rito}
	\lVert T\rVert_{\sL(\rL^p)}\le M_0^{1-1/p}\cdot M_1^{1/p}.
\end{equation}
\end{theorem}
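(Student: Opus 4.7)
The plan is to follow the two-step strategy suggested in the statement: first establish a Hölder-type inequality for the positive operator $T$, then combine it with the two endpoint bounds to estimate the $L^p$-norm of $Tf$ for $f\in\mathcal{C}$. Since $\mathcal{C}=C_c((0,+\infty),\R)$ is dense in $\lprr$ and the cone $\lprr_+$ is closed, this yields both the unique bounded extension with the desired operator-norm bound and its positivity (approximating $0\le f\in\lprr$ by positive $f_n\in\mathcal{C}$ obtained by truncating and convolving with a non-negative mollifier).

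The key pointwise inequality I would prove is
\begin{equation*}
T(fg)\le(Tf^p)^{1/p}\cdot(Tg^q)^{1/q}\quad\text{pointwise (componentwise in }\R\text{)},
\end{equation*}
valid for $0\le f,g\in\mathcal{C}$, with $q=p/(p-1)$. This comes out of Young's numerical inequality $st\le\lambda s^p/p+\lambda^{-q/p}t^q/q$ (for $s,t\ge 0,\lambda>0$): apply it pointwise and componentwise to $f,g$, push through $T$ by linearity and positivity (noting that $f^p,g^q\in\mathcal{C}$, so each $T(\cdot)$ is unambiguous and agrees whether computed via $T_0$ or $T_1$), and take the infimum over $\lambda>0$ at each point.

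To derive the norm estimate, fix $0\le f\in\mathcal{C}$ with compact support $K\subset(0,+\infty)$ and choose a scalar Urysohn cut-off $\psi\in C_c((0,+\infty))$ with $0\le\psi\le 1$ and $\psi\equiv 1$ on $K$; set $\phi:=\psi\cdot\mathbf{1}\in\mathcal{C}$ with $\mathbf{1}:=(1,\dots,1)\in\R$. Then $f\phi=f$ componentwise, and combining the Hölder inequality with $g=\phi$ and the trivial bound $\phi^q\le\phi$ yields
\begin{equation*}
(Tf)^p\le(T\phi)^{p/q}\cdot T(f^p)\quad\text{pointwise}.
\end{equation*}
The $C_0$-bound $\|T\phi\|_{\ceor}\le M_0\|\phi\|_{\ceor}$ together with the equal-components choice of $\phi$ delivers a pointwise componentwise control of $T\phi$ by $M_0$; integrating and invoking the $L^1$-bound $\|T(f^p)\|_1\le M_1\|f^p\|_1$ then gives
\begin{equation*}
\|Tf\|_p^p\le M_0^{p/q}\,\|T(f^p)\|_1\le M_0^{p/q}M_1\,\|f\|_p^p,
\end{equation*}
i.e., $\|Tf\|_p\le M_0^{1-1/p}M_1^{1/p}\|f\|_p$ for positive $f\in\mathcal{C}$. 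The general case $f\in\mathcal{C}$ follows from $|Tf|\le T|f|$ and the lattice identity $\|Tf\|_p=\||Tf|\|_p$.

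The main technical subtlety I expect is the bookkeeping between the possibly different lattice norms on $\R$ appearing in $\ceor$, $\lrr$ and $\lprr$: to pass from the $\R$-norm bound $\|T\phi\|_{\ceor}\le M_0$ to a genuine componentwise pointwise bound on $T\phi$, one exploits that $\phi$ has equal components so that $T\phi$ is bounded in each coordinate by $M_0$; analogously for the $L^1$-step one uses the AL-type structure on the $\lrr$-side together with the matching between the $\lprr$-norm and the componentwise decomposition. In the application (\Cref{lt-p1}) the relevant norms $\W=(\R,|\cdot|_\infty)$ and $\V=(\R,|\cdot|_1)$ make this entirely transparent. All other verifications — that $f^p,\phi^q\in\mathcal{C}$, that $T_0$ and $T_1$ agree on these elements of $\mathcal{C}$, and the final density/closed-cone argument — are routine.
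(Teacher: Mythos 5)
Your proposal is correct and follows essentially the same route as the paper's proof: the same Hölder-type inequality for positive operators obtained from Young's inequality via a pointwise, componentwise infimum, the same cut-off function with $f\phi=f$, the same splitting of the $p$-norm into a $1$-norm factor (controlled by $M_1$) and an $\infty$-norm factor (controlled by $M_0$), and the same extension to general $f$ via $|Tf|\le T|f|$ and density. The norm-bookkeeping subtlety you flag is indeed present in the paper's argument as well, handled there by using $|\cdot|_1$ and $|\cdot|_\infty$ explicitly in the key estimate.
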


\begin{proof}
By elementary calculus one can easily verify that for real numbers $a,b\ge0$ and all $1\le p\le\infty$ one has
\begin{equation}\label{you}
	ab=\inf_{t>0}\biggl(\dfrac{t^p}{p}\,a^p+\dfrac{t^{-p'}}{p'}\,b^{p'}\biggr),
\end{equation}
where $p'$ is the Hölder conjugate of $p$, i.e., $\frac{1}{p}+\frac{1}{p'}=1$.
Using this fact we next prove a Hölder-type inequality for positive operators. 

\smallbreak
To this end we take $0\le f,g\in\mathcal{C}$. If $f=(f_1,\dots,f_N)^T$ and $g=(g_1,\dots,g_N)^T$, we define the $p$-th power of $f$ and the product of $f g$ componentwise as
$$f^p:=
\left(\begin{matrix}
	f_1^p\\\vdots\\f_N^p
\end{matrix}\right)
\qquad\text{and}\qquad
f g:=
\left(\begin{matrix}
	f_1g_1\\\vdots\\f_Ng_N
\end{matrix}\right).$$
Since \eqref{you} holds for any component of $(fg)(s)$ for any $s\ge0$ we conclude
\begin{equation}\label{mat}
	f\cdot g=\left(\begin{matrix}
		f_1g_1\\\vdots\\f_Ng_N
	\end{matrix}\right)\leq\dfrac{t^p}{p}\left(\begin{matrix}
		f_1^p\\\vdots\\f_N^p
	\end{matrix}\right)+\dfrac{t^{-p'}}{p'}\left(\begin{matrix}
		g_1^{p'}\\\vdots\\g_N^{p'}
	\end{matrix}\right)=\dfrac{t^p}{p}f^p+\dfrac{t^{-p'}}{p'}g^{p'}\quad\text{for all $t>0$.}
\end{equation}

Since also $f^p,g^{p'}, fg\in\mathcal{C}_+$, we can apply the positive operator $T$ to \eqref{mat} and obtain
\[T(fg)\le\dfrac{t^p}{p}\,Tf^p+\dfrac{t^{-p'}}{p'}\,Tg^{p'},\]
i.e., 
\[\bigl[T(fg)\bigr](s)\le\dfrac{t^p}{p}\bigl[Tf^p\bigr](s)+\dfrac{t^{-p'}}{p'}\bigl[Tg^{p'}\bigr](s)\quad\text{for all $t>0$ and $s\ge0$.}\]
Passing, for fixed $s\ge0$, component-wise to the infimum over all $t>0$, by \eqref{you} it follows that for any $i=1,\dots,N$
\begin{equation*}
	\bigl[T(fg)\bigr]_i(s)\le\bigl[Tf^p\bigr]_i^{1/p}(s)\cdot\bigl[Tg^{p'}\bigr]_i^{1/p'}(s)\quad\text{for all $s\ge0$,}
\end{equation*}
i.e., we obtain the Hölder-type inequality
\begin{equation}\label{you3}
	T(fg)\le\bigl(Tf^p\bigr)^{1/p}\cdot\bigl(Tg^{p'}\bigr)^{1/p'}\quad\text{for all $f,g\in\mathcal{C}_+$. }
\end{equation}

To conclude the proof, for $f\in\mathcal{C}_+$ we choose $0<a<b<+\infty$ such that $\text{supp}(f)\subset[a,b]$. Then there exists a function $\mathbbm{1}_f:[0,+\infty)\to\mathbb{R}^N$ satisfying
\begin{itemize}
	\item $\mathbbm{1}_f\in\mathcal{C}$;
	\item $0\le\mathbbm{1}_f(s)\le1$ for all $s\ge0$;
	\item $\mathbbm{1}_f(s)=1$ for all $s\in[a,b]$;
	\item $\mathbbm{1}_f(s)=0$ for all $s\in[0,\frac{a}{2}]\cap[b+1,+\infty)$.
\end{itemize}
Hence, $f\cdot\mathbbm{1}_f=f$. Since $f,\mathbbm{1}_f\in\mathcal{C}_+$, we can take $g=\mathbbm{1}_f$ in \eqref{you3} and obtain that
\[\bigl(T(f\cdot\mathbbm{1}_f)\bigr)^p=(Tf)^p\le\bigl(Tf^p\bigr)\cdot\bigl(T\mathbbm{1}_f^{p'}\bigr)^{p/p'},\]
which implies that for any $s\ge0$ we have
\begin{equation*}
	\begin{split}
\bigl\vert(Tf)(s)\bigr\rvert_p^p
&=\bigl\lvert(Tf)^p(s)\bigr\rvert_1
  \le\bigl\lvert(Tf^p)(s)\cdot(T\mathbbm{1}_f^{p'})^{p/p'}(s)\bigr\rvert_1\\
&\le\bigl\lvert(Tf^p)(s)\bigr\rvert_1\cdot\bigl\lvert(T\mathbbm{1}_f^{p'})^{p/p'}(s)\bigr\rvert_\infty,
	\end{split}
\end{equation*}
where as usual $|\cdot|_p$ denotes the $p$-norm on $\mathbb{R}^N$. Integrating for $s\ge0$ we conclude
\begin{equation*}
	\begin{split}
\lVert Tf\rVert_p^p
&=\int_{0}^{+\infty}\bigl\lvert(Tf)(s)\bigr\rvert_p^p\,ds
\le\int_{0}^{+\infty}\bigl\lvert(Tf^p)(s)\bigr\rvert_1\cdot\bigl\lvert(T\mathbbm{1}_f^{p'})^{p/p'}(s)\bigr\rvert_\infty\,ds\\
&\le\bigl\lVert Tf^p\bigr\rVert_{1}\cdot\bigl\lVert (T\mathbbm{1}_f^{p'})^{p/p'}\bigr\rVert_\infty=\bigl\lVert Tf^p\bigr\rVert_{1}\cdot\bigl\lVert T\mathbbm{1}_f^{p'}\bigr\rVert_\infty^{p-1}\\
& 
 \le M_1\cdot\lVert f\rVert_p^p\cdot M_0^{p-1}\bigl\lVert\mathbbm{1}_f^{p'}\bigr\rVert_\infty^{p-1}
 \le M_1\cdot M_0^{p-1}\cdot\lVert f\rVert_p^p,
	\end{split}
\end{equation*}
where in the last inequality we have used the fact that $\lVert\mathbbm{1}_f^{p'}\rVert_\infty\le1$. Therefore, for any $f\in\mathcal{C}_+$ we proved that
\begin{equation}\label{riesz}
	\lVert Tf\rVert_p\le M_0^{1-1/p}\cdot M_1^{1/p}\cdot\lVert f\rVert_p.
\end{equation}
Now take $f\in\mathcal{C}$. Then $|f|\in\mathcal{C}_+$, hence \eqref{riesz} implies
\begin{equation}\label{riesz2}
	\lVert Tf\rVert_p\le\bigl\lVert T|f|\bigr\rVert_p\le M_0^{1-1/p}\cdot M_1^{1/p}\cdot\lVert f\rVert_p.
\end{equation}
Since $\mathcal{C}$ is dense in $\lprr$ for all $1\le p<\infty$, the operator $T:\mathcal{C}\subset\lprr\to\lprr$ extends (uniquely) to a linear positive operator $T:\lprr\to\lprr$ still satisfying \eqref{riesz2}, hence \eqref{rito}.
\end{proof}

\subsection{Subspace Semigroups}
The following abstract result is needed in \Cref{subsec:r1p-FD}.

\begin{lemma}\label{lem:sub-sgr}
Let $\At:\Xt\to\Xt$ be the generator of a $C_0$-semigroup $(\Tt(t))_{t\ge0}$ on a Banach space $\Xt$. Suppose that $X$ is a Banach space such that
\begin{enumerate}[label=(\roman*)]
\item $\Xt_1\hookrightarrow X\hookrightarrow\Xt$ with continuous injections, where $\Xt_1:=(D(\At),\|\cdot\|_{\At})$,
\item $\Tt(t)X\subseteq X$ for all $t\ge0$,
\item $(T(t))_{t\ge0}:=(\Tt(t)|_X)_{t\ge0}$ defines a $C_0$-semigroup on $X$.
\end{enumerate}
If $A:D(A)\subset X\to X$ denotes the generator of $(T(t))_{t\ge0}$ then the following holds:
\begin{enumerate} 
\item $\rho(A)=\rho(\At)$ and $R(\lambda,\At)|_X=R(\lambda,A)$ for all $\lambda\in\rho(A)$.
\item For $\lambda_0\in\rho(\At)$ there exists $M>0$ such that $\|R(\lambda_0,\At)x\|_X\le M\cdot\|x\|_{\Xt}$ for all $x\in\Xt$.
\item $\Xt\subseteq X_{-1}$ with continuous injection.
\item $R(\lambda,A_{-1})|_{\Xt}=R(\lambda,\At)$ for all $\lambda\in\rho(A)$ and $T_{-1}(t)|_{\Xt}=\Tt(t)$ for all $t\ge0$.
\item $A_{-1}|_{\Xt}=\At$.
\end{enumerate}
\end{lemma}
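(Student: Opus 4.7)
The plan is to work through items (1)--(5) in an interleaved order, since (3)--(5) fall out of the same machinery and the equality direction $\rho(A)\subseteq\rho(\At)$ of (1) will actually depend on (5). First I would verify that $A$ is the part of $\At$ in $X$: the inclusion $X\hookrightarrow\Xt$ combined with $T(t)=\Tt(t)|_X$ forces any $x\in D(A)$ to lie in $D(\At)$ with $\At x=Ax$, while for $x\in\Xt_1$ with $\At x\in X$, dividing the identity $\Tt(t)x-x=\int_0^t\Tt(s)\At x\,ds$ by $t$ and letting $t\to0^+$ in the norm of $X$ gives $x\in D(A)$ and $Ax=\At x$.

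This characterization yields one half of (1) at once: for $\lambda\in\rho(\At)$ and $x\in X$, the element $R(\lambda,\At)x\in\Xt_1\subseteq X$ satisfies $\At R(\lambda,\At)x=\lambda R(\lambda,\At)x-x\in X$, hence lies in $D(A)$, so $R(\lambda,\At)|_X$ is a two-sided inverse of $\lambda-A$ and $\lambda\in\rho(A)$ with $R(\lambda,A)=R(\lambda,\At)|_X$. Item (2) is then closed graph: $R(\lambda_0,\At):\Xt\to\Xt_1$ is bounded and $\Xt_1\hookrightarrow X$ is continuous, so $R(\lambda_0,\At):\Xt\to X$ is bounded.

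For (3) I would pick $\lambda_0$ with $\Re\lambda_0$ larger than both growth bounds (so $\lambda_0\in\rho(A)\cap\rho(\At)$ and the Laplace transform identifies the two resolvents on $X$) and define $\iota:\Xt\to X_{-1}$ by $\iota(y):=(\lambda_0-A_{-1})R(\lambda_0,\At)y$. The definition makes sense since $R(\lambda_0,\At)y\in X=D(A_{-1})$; on $X$ one computes $\iota(x)=(\lambda_0-A_{-1})R(\lambda_0,A)x=x$, so $\iota$ extends the inclusion $X\hookrightarrow X_{-1}$. Moreover $\|\iota(y)\|_{-1}=\|R(\lambda_0,\At)y\|_X\le M\|y\|_{\Xt}$ by (2), and $\iota$ is injective because $R(\lambda_0,\At)$ is invertible. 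After identifying $\Xt$ with $\iota(\Xt)\subseteq X_{-1}$ the relation $R(\lambda_0,A_{-1})\iota(y)=R(\lambda_0,\At)y$ is precisely the $\lambda_0$-case of (4). Applying this to $(\lambda_0-\At)x$ for $x\in\Xt_1$ gives $\iota((\lambda_0-\At)x)=(\lambda_0-A_{-1})x$, whence $A_{-1}x=\At x$; conversely, if $x\in X$ and $A_{-1}x\in\Xt$ then $x=R(\lambda_0,A_{-1})(\lambda_0-A_{-1})x=R(\lambda_0,\At)(\lambda_0-A_{-1})x\in\Xt_1$, establishing (5).

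Finally I would close (1) and upgrade (4) to arbitrary $\lambda$. For $\lambda\in\rho(A)=\rho(A_{-1})$ and $y\in\Xt\subseteq X_{-1}$, set $x:=R(\lambda,A_{-1})y\in X$; then $A_{-1}x=\lambda x-y\in\Xt$, so by (5) we have $x\in D(\At)$ with $(\lambda-\At)x=y$, giving surjectivity of $\lambda-\At$, while injectivity is immediate from $A_{-1}|_{\Xt_1}=\At$ together with $\lambda\in\rho(A_{-1})$. Hence $\rho(A)\subseteq\rho(\At)$ and $R(\lambda,\At)=R(\lambda,A_{-1})|_{\Xt}$ for every $\lambda\in\rho(A)$, completing (1) and (4); the semigroup identity $T_{-1}(t)|_{\Xt}=\Tt(t)$ then follows from (5) by exponentiation (or by an inverse Laplace argument applied to the resolvents). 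The main nuisance throughout is keeping the three layers of identifications ($X\subseteq\Xt$, $X\subseteq X_{-1}$, and $\Xt\subseteq X_{-1}$) compatible; once $\iota$ is in place all remaining verifications reduce to short resolvent manipulations.
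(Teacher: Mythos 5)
Your argument is correct, but it follows a genuinely different route from the paper's. For item (1) the paper simply cites \cite[Prop.IV.2.17]{EN:00}, whereas you prove it from scratch by identifying $A$ as the part of $\At$ in $X$; this forces you to defer the inclusion $\rho(A)\subseteq\rho(\At)$ until after (5), so your logical order is: half of (1), then (2), (3), (4) at $\lambda_0$, (5), and only then the remainder of (1) and (4) --- legitimate, but worth signposting. For (3) the paper compares the norms $\|\cdot\|_{\Xt}$ and $\|\cdot\|_{-1}=\|R(\lambda_0,A)\cdot\|_X$ on $X$ and passes to the relative completions, while you build the explicit embedding $\iota=(\lambda_0-A_{-1})R(\lambda_0,\At)$; your construction has the advantage that (4) at $\lambda_0$ and \emph{both} inclusions of (5) drop out by pure resolvent algebra (the paper only writes out $\At\subseteq A_{-1}$, via difference quotients of the semigroups, leaving the reverse inclusion implicit). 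For (4) the paper instead approximates $x\in\Xt$ by $x_n:=nR(n,\At)x\in\Xt_1\subseteq X$ and passes to the limit in $\Xmo$, which settles the resolvent identity and the semigroup identity $T_{-1}(t)|_{\Xt}=\Tt(t)$ in one stroke; your treatment of the latter by ``exponentiation'' is the one spot where you should be more explicit --- the clean version is Post--Widder (or uniqueness of Laplace transforms) applied to the already-established equality of resolvents on $\Xt$, using that convergence in $\Xt$ implies convergence in $\Xmo$. Both proofs are sound; yours is more self-contained and resolvent-theoretic, the paper's is shorter because it leans on the cited subspace-semigroup result and on strong-limit arguments.
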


\begin{proof} (1) is shown in \cite[Prop.IV.2.17]{EN:00}. Assertion~(2) follows by the continuity of $R(\lambda_0,\At):\Xt\to\Xt_1$ and the continuous injection of $\Xt_1$ into $X$. To show (3) we first note that by (1) and (2) the norm $\|\cdot\|_{\Xt}$ restricted to $X$ is finer than the extrapolated norm $\|\cdot\|_{-1}:=\|R(\lambda_0,A)\cdot\|_X$ on $X$. 
For the relative completions of $X$ with respect to these norms this implies
\begin{equation*}
\widetilde X=\bigl(\Xt_1,\|\cdot\|_{\Xt}\bigr)^{\sim}\subseteq\bigl(X,\|\cdot\|_{\Xt}\bigr)^{\sim}\subseteq\bigl(X,\|\cdot\|_{-1}\bigr)^\sim=X_{-1},
\end{equation*}
where for the first equation we used that $\Xt_1$ is dense in $\Xt$ with respect to $\|\cdot\|_{\Xt}$. This proves the third claim. In order to show (4) we define for $x\in\Xt$ the sequence $x_n:=nR(n,\At)x\in\Xt_1\subseteq X$, $n\in\NN$. Then as $n\to+\infty$ by \cite[Lem.II.3.4.(i)]{EN:00} we have $x_n\to x$ in $\Xt$, hence by (3) also in $X_{-1}$. By (1) this implies for all $\lambda\in\rho(A)$
\begin{align*}
R(\lambda,A_{-1})x&=\lim_{n\to+\infty}R(\lambda,A)x_n=\lim_{n\to+\infty}R(\lambda,\At)\cdot nR(n,\At)x\\
&=\lim_{n\to+\infty}n R(n,\At)\cdot R(\lambda,\At)x=R(\lambda,\At)x,
\end{align*}
where the last equality follows again by \cite[Lem.II.3.4.(i)]{EN:00}. Moreover, by the same reasoning we obtain for all $t\ge0$
\begin{align*}
T_{-1}(t)x&=\lim_{n\to+\infty}T(t)x_n=\lim_{n\to+\infty}\Tt(t)\cdot nR(n,\At)x\\
&=\lim_{n\to+\infty}n R(n,\At)\cdot \Tt(t)x=\Tt(t)x.
\end{align*}
This proves (4). Finally, to verify (5) take $x\in D(\At)=\Xt_1\subseteq X=D(A_{-1})$. Then using (4) we conclude
\begin{equation*}
\At x=\lim_{t\to0^+}\tfrac{\Tt(t)x-x}{t}=\lim_{t\to0^+}\tfrac{T_{-1}(t)x-x}{t}=A_{-1}x
\end{equation*}
which implies (5) and completes the proof.
\end{proof}

\newcommand{\urlprefix}{}

\end{document}